\newcommand{\C}{{\mathbb C}}
\newcommand{\Q}{{\mathbb Q}}
\newcommand{\Z}{{\mathbb Z}}
\newcommand{\F}{{\mathbb F}}
\newcommand{\N}{{\mathbb N}}
\newcommand{\R}{{\mathbb R}}
\newcommand{\Proj}{{\mathbb P}}
\newcommand{\res}{{\operatorname{res}}}
\newcommand{\nres}{{\operatorname{nres}}}
\newcommand{\comp}{{\operatorname{comp}}}
\newcommand{\sing}{{\operatorname{sing}}}
\newcommand{\wt}{{\operatorname{wt}}}
\newcommand{\SL}{{\operatorname{SL}}}
\newcommand{\QR}{{\operatorname{QR}}}
\newcommand{\CW}{{\operatorname{CW}}}
\newcommand{\PGL}{{\operatorname{PGL}}}
\newcommand{\Aut}{{\operatorname{Aut}}}
\newcommand{\tr}{{\operatorname{tr}}}
\newcommand{\legen}[2]{\left(\frac{#1}{#2}\right)}
\newcommand{\eps}{\varepsilon}
\newcommand{\es}[1]{\begin{equation}\begin{split}#1\end{split}\end{equation}}
\newcommand{\est}[1]{\begin{equation*}\begin{split}#1\end{split}\end{equation*}}
\newcommand{\ord}{\mathop{\rm ord}}
\newcommand{\eqdef}{\mathop{=}^{\rm def}}
\let\@@pmod\pmod
\DeclareRobustCommand{\pmod}{\@ifstar\@pmods\@@pmod}
\def\@pmods#1{\mkern4mu({\operator@font mod}\mkern 6mu#1)}
\newtheorem{thm}{Theorem}
\newtheorem{lemma}{Lemma}
\newtheorem{prop}{Proposition}
\newtheorem{cor}{Corollary}
\newtheorem{example}{Example}
\title[Traces and refined weight enumerators]%
{Traces of Hecke operators and refined weight enumerators of Reed-Solomon codes}
\author{Nathan Kaplan}
\address{Department of Mathematics, University of California, Irvine, CA 92697-3875}
\email{nckaplan@math.uci.edu}
\author{Ian Petrow}
\address{\'Ecole Polytechnique F\'ed\'erale de Lausanne, Section des Math\'ematiques, 1015 Lausanne, Switzerland}
\email{ian.petrow@epfl.ch}
\thanks{The second author is partially supported by Swiss National Science Foundation grant 200021-137488, and an AMS-Simons travel grant.}
\subjclass[2010]{Primary 11T71; Secondary 11F25, 11G20, 94B27}
\begin{document}

\maketitle

\begin{abstract} 
We study the quadratic residue weight enumerators of the dual projective Reed-Solomon codes of dimensions $5$ and $q-4$ over the finite field $\F_q$.  Our main results are formulas for the coefficients of the the quadratic residue weight enumerators for such codes.  If $q=p^v$ and we fix $v$ and vary $p$ then our formulas for the coefficients of the dimension $q-4$ code involve only polynomials in $p$ and the trace of the $q$\textsuperscript{th} and $(q/p^2)$\textsuperscript{th} Hecke operators acting on spaces of cusp forms for the congruence groups $\SL_2 (\Z), \Gamma_0(2)$, and $\Gamma_0(4)$.  The main tool we use is the Eichler-Selberg trace formula, which gives along the way a variation of a theorem of Birch on the distribution of rational point counts for elliptic curves with prescribed $2$-torsion over a fixed finite field.
\end{abstract}

\section{Introduction}\label{intro}
The main goal of this paper is to show how traces of Hecke operators for the congruence subgroups $\SL_2(\Z), \Gamma_0(2)$ and $\Gamma_0(4)\cong \Gamma(2)$ enter into formulas for certain weight enumerators attached to classical and projective Reed-Solomon codes.  We study a refinement of the Hamming weight enumerator, which we call the \emph{quadratic residue weight enumerator}.   We prove a variation of the MacWilliams theorem for it, which follows in a straightforward way from the analogous MacWilliams theorem for the complete weight enumerator.

Projective and classical Reed-Solomon codes are maximum distance separable, which means that their Hamming weight enumerators are completely understood.  We compute the quadratic residue weight enumerator of the $5$-dimensional projective Reed-Solomon code, which leads directly to the corresponding weight enumerator of the $5$-dimensional classical Reed-Solomon code of length $q$ over the finite field $\F_q$.  By applying our version of the MacWilliams theorem we deduce formulas for individual coefficients of the quadratic residue weight enumerator of the projective Reed-Solomon code of dimension $q-4$ and for the corresponding classical Reed-Solomon code.  One of the main points of this paper is to demonstrate that there are interesting cases in which refined weight enumerators can be computed explicitly, giving additional information about rational point count distributions for varieties coming from well-studied codes. This addresses a particular case of Research Problem 11.2 of \cite{MacwilliamsSloane} about refined weight enumerators of Reed-Solomon codes.  

This paper fits into a literature about how weight enumerators of algebraically constructed codes can be expressed in terms of number-theoretic functions.  For a broad overview of these types of connections, focusing on codes and exponential sums over finite fields, see the survey of Hurt \cite{Hurt}.  Most directly related to our work is the detailed analysis of Zetterberg and Melas codes given in \cite{vanderGeerSchoofvanderVlugt, SchoofvanderVlugt}.  These codes are related to certain families of genus one curves over finite fields, and the Eichler-Selberg trace formula for $\Gamma_1(4)$ is the main tool in the proofs.  While these earlier families are considered only in characteristic $2$ and $3$, we consider codes in all characteristics not equal to $2$, and every isomorphism class of an elliptic curve over $\F_q$ contributes to the refined weight enumerators that we consider.  In \cite{SchoofSurvey}, Schoof notes that the appearance of Hecke operators acting on cusp forms for $\Gamma_1(4)$ in the formulas for these weight enumerators is ``probably related'' to the fact that the curves in the families considered have a rational point of order $4$, but does not give a ``direct connection".  In our analysis of the quadratic residue weight enumerator the connection to rational $2$-torsion points on elliptic curves is made much more explicit.  

We also note that the quadratic residue weight enumerator has appeared previously, for example in Section 5.8 of \cite{NebeRainsSloane} where it is used to study Hermitian self-dual codes over $\F_9$. However, we are unaware of any previous work connecting the coefficients of this weight enumerator to number theory.

\subsection{Reed-Solomon codes and weight enumerators}

Projective Reed-Solomon codes are constructed by evaluating each element of the $\F_q$-vector space of homogeneous polynomials of degree $h$ in two variables at an affine representative of each of the $q+1\ \F_q$-rational points of $\Proj^1$.  More precisely, take the standard choice of affine representatives $(1,a)$ where $a\in \F_q$ together with $(0,1)$ under some fixed ordering $p_1,\ldots, p_{q+1}$, and consider the evaluation map defined by
\[
f \mapsto \left(f(p_1),\ldots, f(p_{q+1})\right) \in \F_q^{q+1}.
\]
For $h \le q$ the image of this map is an $(h+1)$-dimensional linear subspace of $\F_q^{q+1}$ called the \emph{projective Reed-Solomon code}, or sometimes the doubly extended Reed-Solomon code, of order $h$.  A key observation is that a different choice of affine representatives gives an equivalent code, i.e., the same up to scaling and permuting coordinates.  We denote this code by $C_{1,h}$. Choosing a different ordering of the points also gives an equivalent code.  Puncturing such a code at one point, that is, deleting a fixed coordinate from each codeword, gives the classical, or affine, Reed-Solomon code of length $q$.

One reason why Reed-Solomon codes have received so much attention is that their minimal distance is as large as possible given their length and dimension. The \emph{Hamming distance} on $\F_q^n$ between $x = (x_1, \ldots, x_n)$ and $y = (y_1, \ldots, y_n)$ is defined
\est{d(x,y) \eqdef \#\left\{ i\in [1,n]\ \text{ \emph{such that} } x_i \neq y_i\right\},}
and the \emph{weight}, $\wt(x)$, of $x$ is the number of non-zero coordinates of $x$. That is, $\wt(\cdot)$ is a norm on $\F_q^n$ and the Hamming distance is the induced metric.  The \emph{minimal distance} of a code $C$ is $\min_{c_1 \neq c_2 \in C} d(c_1,c_2)$.  For a linear code, the minimal distance is equal to the minimal weight of a non-zero codeword.  The Singleton bound \cite[Ch.1, Thm 9]{MacwilliamsSloane}, states that the maximum size of a code $C\subseteq \F_q^n$ with minimum distance $d$ is $q^{n-(d-1)}$.  A code for which equality holds is called \emph{maximal distance separable} or \emph{MDS}.  A non-zero degree $h$ form on $\Proj^1$ vanishes at no more than $h$ distinct $\F_q$-rational points, implying that for $h \le q$ the code $C_{1,h}$ has minimum distance $q+1-h$ and is therefore MDS.

In order to analyze rational point count distributions for families of varieties, we would like to have a deeper understanding of the associated codes beyond their minimal distances.  The \emph{Hamming weight enumerator} of $C$ is a homogeneous polynomial in two variables that keeps track of the number of codewords of $C$ of each weight.  Given a code $C\subset \F_q^n$ we define
\[
W_C(X,Y)  \eqdef  \sum_{c \in C} X^{n-\wt(c)} Y^{\wt(c)} = \sum_{i=0}^n A_i X^{n-i} Y^i,
\]
where $A_i = \#\left\{ c\in C\ : \ \wt(c) = i\right\}$.  The discussion above shows that the weight enumerator of $C_{1,h}$ does not depend on the choice of affine representatives. The weight enumerator of an MDS code of length~$n$ over $\F_q$ is uniquely determined and easily computed. See Corollary 5 of Chapter 11 of \cite{MacwilliamsSloane}.

We study a refinement of the Hamming weight enumerator that carries additional information about the non-zero coordinates of codewords.  The \emph{quadratic residue weight enumerator} of  $C \subset\F_q$ is defined by
\begin{eqnarray*}
\QR_C(X,Y,Z)  \eqdef   \sum_{c\in C} X^{n-\wt(c)} Y^{\res(c)} Z^{\nres(c)}   =  \sum_{\substack{i,j,k \geq 0 \\ i+j+k=q+1}} A_{i,j,k} X^{i} Y^j Z^k,
\end{eqnarray*}
where $\res(c)$ denotes the number of coordinates of $c$ that are non-zero squares in $\F_q$, $\nres(c)$ denotes the number of coordinates that are not squares, and $A_{i,j,k}$ denotes the number of codewords $c \in C$ with $\res(c) = j$ and $\nres(c) = k$.  When $h$ is even $\QR_{C_{1,h}}(X,Y,Z)$ is well-defined since choosing a different affine representative for a projective point corresponds to multiplying the corresponding coordinate of each codeword by the same non-zero quadratic residue.

A main result (Theorem \ref{QRWt}) of this paper is the computation of $\QR_{C_{1,4}}(X,Y,Z)$ by a slight refinement of the methods of \cite{Deu} and \cite{Schoof}.  The coefficients of this weight enumerator solve an enumerative problem about elliptic curves.  The coefficient $A_{i,j,k}$ is equal to the number of homogeneous quartic polynomials $f_4(x,y)$ such that the variety defined by $w^2 = f_4(x,y)$ has exactly $i+2j\ \F_q$-rational points, $i$ of which come from roots of $f_4$.  This is related to counting elliptic curves over $\F_q$ with a specified number of rational points and a specified number of rational $2$-torsion points. See Section \ref{RSWEsec} for details.

In order to give the statement of our main result we introduce one additional important concept from coding theory, the dual code of a linear code.  Given $x = (x_1,\ldots, x_n)$ and $y = (y_1, \ldots, y_n)$ in $\F_q^n$ we define a non-degenerate symmetric bilinear pairing $\F_q^n \times \F_q^n \rightarrow \F_q$ by
\[
\langle x,y \rangle \eqdef \sum_{i=1}^n x_i y_i \in \F_q,
\]
and the \emph{dual code} of a linear code $C$ to be 
\[
C^\perp \eqdef \left\{y \in \F_q^n \ \mid \   \langle x,y \rangle = 0\ \forall\  x \in \F_q^n \right\}.
\]

The MacWilliams theorem \cite[Ch. 5, Thm 13]{MacwilliamsSloane}, says that the weight enumerator of $C$ determines the weight enumerator of $C^\perp$.
\begin{thm}[MacWilliams]\label{MacWthm}
Let $C \subseteq \F_q^n$ be a linear code.  Then
\[
W_{C^\perp}(X,Y) = \frac{1}{|C|} W_C(X+(q-1)Y,X-Y).
\]
\end{thm}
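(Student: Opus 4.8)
The plan is to run the standard discrete‑Fourier (character‑sum) argument. Fix a nontrivial additive character $\psi\colon \F_q \to \C^\times$, and for $v\in\F_q^n$ introduce the weight‑monomial function $f(v)\eqdef X^{n-\wt(v)}Y^{\wt(v)}$, so that $W_C(X,Y)=\sum_{v\in C}f(v)$ and similarly for any code. The key structural observation is that $f$ factors over coordinates: $f(v)=\prod_{i=1}^n\phi(v_i)$ where $\phi(0)=X$ and $\phi(a)=Y$ for $a\neq 0$. Consequently its Fourier transform
\[
\hat f(u)\eqdef \sum_{v\in\F_q^n}\psi(\langle u,v\rangle)f(v)
\]
also factors as $\hat f(u)=\prod_{i=1}^n\hat\phi(u_i)$, with $\hat\phi(b)=\sum_{a\in\F_q}\psi(ab)\phi(a)$.

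The second step is to evaluate $\hat\phi$. For $b=0$ one gets $\hat\phi(0)=X+(q-1)Y$. For $b\neq 0$, orthogonality of additive characters gives $\sum_{a\in\F_q}\psi(ab)=0$, hence $\sum_{a\neq 0}\psi(ab)=-1$ and $\hat\phi(b)=X-Y$. Therefore
\[
\hat f(u)=(X+(q-1)Y)^{n-\wt(u)}(X-Y)^{\wt(u)},
\]
i.e.\ $\hat f$ is precisely the weight monomial evaluated at the substituted variables, so that $\sum_{u\in C}\hat f(u)=W_C\big(X+(q-1)Y,\,X-Y\big)$.

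The third step is to compute $\sum_{u\in C}\hat f(u)$ in a second way by interchanging the order of summation:
\[
\sum_{u\in C}\hat f(u)=\sum_{v\in\F_q^n}f(v)\sum_{u\in C}\psi(\langle u,v\rangle).
\]
For fixed $v$, the map $u\mapsto\langle u,v\rangle$ is an $\F_q$‑linear functional on $C$; it is identically zero exactly when $v\in C^\perp$, in which case the inner sum is $|C|$, and otherwise it is surjective onto $\F_q$, making the inner sum $0$ by orthogonality. Hence $\sum_{u\in C}\hat f(u)=|C|\sum_{v\in C^\perp}f(v)=|C|\,W_{C^\perp}(X,Y)$. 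Comparing the two evaluations yields $W_C(X+(q-1)Y,X-Y)=|C|\,W_{C^\perp}(X,Y)$, which is the asserted identity after dividing by $|C|$.

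I do not expect any genuine obstacle here; the only point needing a little care is the orthogonality step identifying $\sum_{u\in C}\psi(\langle u,v\rangle)$, which uses that the pairing $\langle\cdot,\cdot\rangle$ is nondegenerate on all of $\F_q^n$, so that $v\notin C^\perp$ forces the restricted functional to be nonzero and hence surjective. (One could alternatively simply invoke \cite[Ch.~5, Thm 13]{MacwilliamsSloane}, but the self‑contained character argument is short and will be reused, in the complete‑weight‑enumerator form, for the quadratic residue variant.)
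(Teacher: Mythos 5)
Your argument is correct, and there is nothing to check it against in the paper: Theorem~\ref{MacWthm} is stated there as a citation of \cite[Ch.~5, Thm 13]{MacwilliamsSloane} with no proof supplied. What you have written is the standard discrete-Fourier (Poisson summation) proof, and every step is sound: the factorization $f(v)=\prod_i\phi(v_i)$, the evaluation $\hat\phi(0)=X+(q-1)Y$ and $\hat\phi(b)=X-Y$ for $b\neq 0$ via orthogonality, and the identification of $\sum_{u\in C}\psi(\langle u,v\rangle)$ as $|C|\cdot\mathds{1}_{v\in C^\perp}$ (for $v\notin C^\perp$ the restricted functional is a nonzero linear map $C\to\F_q$, hence surjective with equal fibers, so the sum vanishes). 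Your closing remark is also on target: the paper's actual workhorse is the complete-weight-enumerator version (Theorem~\ref{CompMac}, likewise cited rather than proved), which is obtained from exactly this argument by replacing the two-valued $\phi$ with the function sending $w_i$ to the formal variable $z_i$; the quadratic-residue MacWilliams theorem (Theorem~\ref{QRMac}) is then a specialization of that. So your character-sum proof is precisely the self-contained argument that, suitably generalized, underlies everything the paper imports here.
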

It is well-known that the dual of an MDS code is MDS, and moreover that the dual of a Reed-Solomon code is also a Reed-Solomon code.  More specifically, for $h\le q,\ C_{1,h}^\perp = C_{1,q-h-1}$.  See \cite[Ch. 11, Thm 2]{MacwilliamsSloane} for details. 

\subsection{Main results}

We use our computation of the quadratic residue weight enumerator of $C_{1,4}$ combined with a variation of the MacWilliams theorem to show that the coefficients of the quadratic residue weight enumerator of $C_{1,q-5}$ can be expressed in terms of traces of Hecke operators acting on spaces of cusp forms for the congruence subgroups $\SL_2(\Z),\Gamma_0(2),$ and $ \Gamma_0(4)$.  Let $\Q[x]$ denote the polynomial ring.  Let $v,R,N$ be integers, $v,N\geq 1$ and $R\geq 0$.  
Let $M_{v,R}(N)$ denote the set of functions on odd $v$\textsuperscript{th} prime powers $q = p^v$ of the form
\[
\sum_{k = 2 \atop k \equiv 0\pmod*{2}}^{2R+2} 
\left(g_{k,1}(p) \tr_{\Gamma_0(N),k} T_q +   g_{k,2}(p) \tr_{\Gamma_0(N),k} T_{q/p^2}\right),
\]
where each $g_{k,1}, g_{k,2} \in \Q[x]$ and we interpret $\tr_{\Gamma_0(N),k}T_{q/p^2}$ as $0$ unless $p^3 \mid q$.  We also let $M_{v}$ denote the set of functions on odd $v$\textsuperscript{th} prime powers $q=p^v$ of the form $g(p)$ for some $g\in \Q[x]$.  Note that the $M_{v,R}(N)$ and $M_v$ have the structure of $\Q[x]$-modules.  In what follows, all sums of modules take place within the $\Q[x]$-module of \emph{all} functions on odd $v$\textsuperscript{th} prime powers.  

\begin{thm}\label{MainTheorem} 
Let $v$ be a fixed positive integer. For fixed non-negative integers $j$ and $k$ consider the coefficient $A_{q+1-j-k,j,k}$ of $\QR_{C_{1,q-5}}(X,Y,Z)$ as a function on $v$\textsuperscript{th} powers of odd primes, $q = p^v$.   For each fixed residue class of $q \pmod{4},\ A_{q+1-j-k,j,k}$ is given by polynomials in $p$ and traces of Hecke operators.  More precisely, for each fixed $v,j,k$ and $\epsilon \in \{\pm 1\}$, there exists
\est{h \in M_v + M_{v,\lfloor \frac{j+k}{2}\rfloor}(1) + M_{v,\lfloor \frac{j+k-2}{2}\rfloor}(2) + M_{v,\lfloor \frac{j+k-3}{2}\rfloor}(4)} 
such that $A_{q+1-j-k,j,k} = h(q)$ for all $v$\textsuperscript{th} powers of odd primes $q = p^v$ satisfying $q\equiv \epsilon \pmod 4$.       
\end{thm}
\begin{example}
When $q=p\equiv 1 \pmod 4$ is a prime we find \begin{small}\est{  
\QR_{C_{1,q-5}}(X,Y,Z) = X^{q+1} + (q-1)^2q(q+1) \Bigg( \frac{1}{23040}(q^2 - 6q + 53)(q - 3) X^{q-5}\left(Y^6+Z^6\right) \\ 
+ \frac{1}{1536}(q - 1)(q - 3)(q - 5) X^{q-5}\left(Y^4Z^2+Y^2Z^4\right) \\ 
+ \frac{1}{645120}\left(q^5 - 20q^4 + 120q^3 - 860q^2 + 6154q - 13005- 35\tr_{\Gamma_0(4),6}T_q\right)X^{q-6}\left(Y^7+Z^7\right) \\
+ \frac{1}{92160}(q^5 - 20q^4 + 160q^3 - 660q^2  + 1274q - 765+ 5\tr_{\Gamma_0(4),6}T_q)X^{q-6}\left(Y^6Z+YZ^6\right) \\
+ \frac{1}{30720}(q^5 - 20q^4 + 160q^3 - 660q^2 + 1274q - 765+ 5\tr_{\Gamma_0(4),6}T_q )X^{q-6}\left(Y^5Z^2+Y^2Z^5\right) \\
+\frac{1}{18432}(q^5 - 20q^4 + 152q^3 - 508q^2+ 714q - 333 - 3\tr_{\Gamma_0(4),6}T_q ) X^{q-6}\left(Y^4Z^3+Y^3Z^4\right)\\ + O(X^{q-7})\Bigg).}\end{small}
\end{example}
\begin{example}
When $q=p\equiv 3 \pmod 4$ is a prime $\geq 7$ we find \begin{small}
\est{\QR_{C_{1,q-5}}(X,Y,Z) = X^{q+1} +(q-1)^2q(q+1)\Bigg( \frac{1}{3840}(q + 1)(q - 3)(q - 7)X^{q-5}\left(Y^5Z+YZ^5\right) \\ + \frac{1}{1152}(q^2 - 6q + 17)(q - 3)X^{q-5}Y^3Z^3 \\
 + \frac{1}{645120}(q^5 - 20q^4 + 120q^3 - 20q^2  - 566q - 405- 35\tr_{\Gamma_0(4),6}T_q)X^{q-6}\left(Y^7+Z^7\right) \\
+\frac{1}{92160}(q^5 - 20q^4 + 160q^3 - 540q^2 + 314q + 1035+ 5\tr_{\Gamma_0(4),6}T_q ) X^{q-6}\left(Y^6Z+YZ^6\right) \\
+ \frac{1}{30720}(q^5 - 20q^4 + 160q^3 - 540q^2  + 314q + 1035+ 5\tr_{\Gamma_0(4),6}T_q)X^{q-6}\left(Y^5Z^2+Y^2Z^5\right) \\
+ \frac{1}{18432}(q^5 - 20q^4 + 152q^3 - 628q^2  + 1674q - 2133- 3\tr_{\Gamma_0(4),6}T_q)X^{q-6}\left(Y^4Z^3+Y^3Z^4\right)\\ + O(X^{q-7})\Bigg).}\end{small}
\end{example}
Remarks: \begin{enumerate}
\item The above formulas match with an explicit brute-force computation of $\QR_{C_{1,q-5}}(X,Y,Z)$ for small values of $q$.
\item There are actions of $\Aut(\Proj^1(\F_q)) \cong \PGL_2(\F_q)$ and of $\F^*_q$ on non-zero codewords of $C_{1,q-5}$, so it is clear up to factors of 2 or 3 that $(q-1)^2q(q+1)$ divides all of the quadratic residue weight enumerator coefficients after the first.  
\item The denominators in the above example arise essentially only from the trinomial coefficients produced by the quadratic MacWilliams theorem.  If sufficiently motivated one could understand them completely. 
\item The proof of Theorem \ref{MainTheorem} (see the end of Section \ref{PointCountDistributions}) also gives other types of formulas, e.g. the case where one fixes $p$ and varies $v$; however the formulas involved are less aesthetically pleasing.
\end{enumerate}

Similar ideas can be used to show analogous results for the weight enumerators of classical Reed-Solomon codes of dimension $q-5$.  The dual of the Reed-Solomon code of dimension $5$ and length $q$ over $\F_q$ is the Reed-Solomon code of dimension $q-5$ and length $q$, which we denote $C'_{1,q-5}$.  
\begin{example}
Let $a(q)$ be the $q$\textsuperscript{th} Fourier coefficient of $\eta^{12}(2z)$, the unique normalized Hecke eigenform of weight $6$ for $\Gamma_0(4)$.  

When $q \ge 11$ is a prime congruent to $1$ modulo $4$, the $X^{q-7} Y^7$ coefficient of $\QR_{C'_{1,q-5}}(X,Y,Z)$ is 
\est{ \frac{1}{645120} {(q-6)q(q-1)^2 (q^5-20q^4 +120 q^3 -860 q^2 +6154 q - 13005)} 
\\ -  \frac{1}{18432} (q-6)q(q-1)^2 a(q). }
When $q \ge 7$ is a prime congruent to $3$ modulo $4$, the $X^{q-7} Y^7$ coefficient of $\QR_{C'_{1,q-5}}(X,Y,Z)$ is 
\est{ \frac{1}{645120} (q-6)q (q+1) (q-1)^2 (q^4-21q^3 +141 q^2 - 161 q -405) \\
- \frac{1}{18432} (q-6)q(q-1)^2 a(q).}
\end{example}
Note that up to factors of 2 or 3 that $q(q-1)^2$ divides all of the coefficients of $\QR_{C'_{1,q-5}}(X,Y,Z)$ after the first since there is an action on non-zero codewords by the subgroup of $\Aut(\Proj^1(\F_q))$ fixing a point of $\Proj^1(\F_q)$, as well as an action of $\F^*_q$ by scaling.  

The results of Section \ref{PointCountDistributions} can also be used to give exact formulas for certain sums involving rational point counts for families of elliptic curves over a fixed finite field. 
\begin{example}
Let $p$ be an odd prime.  For $a,b \in \F_p$, let $E_{a,b}$ denote the projective curve $y^2 z = x(x^2+axz+bz^2)$. We define
\[
S'_3(p) = \sideset{}{'}\sum_{a,b \in \F_p} \left(\#E_{a,b}(\F_p) - (p+1)\right)^6
\]
where the symbol $\sum'$ indicates that we only sum over pairs $(a,b)$ such that $E_{a,b}$ defines an elliptic curve.  Then
\[
S'_3(p) = (p-1)(p+1)\left(5p^3 -10 p^2 - 8p - 2\right) - \frac{1}{2} (p-1) \tr_{\Gamma_0(4),8}T_p.
\]
\end{example}
\noindent Birch gives similar formulas for sums taken over all elliptic curves over $\F_p$ in \cite{Birch}.

The projective Reed-Solomon codes $C_{1,h}$ fit into a broader class of projective Reed-Muller codes. One analogously defines $C_{n,h}$ by evaluating each homogeneous degree $h$ form on $\Proj^n$ at each of the $(q^{n+1}-1)/(q-1)\ \F_q$-rational points of $\Proj^n$.  For $n>1$ these codes are not MDS and their Hamming weight enumerators are generally quite hard to compute.  For example, the weight enumerators from codes from quadrics in $\Proj^n,\ C_{n,2}$, are computed in \cite{Elk}, along with the weight enumerator from codes coming from plane cubics, $C_{2,3}$, and from cubic surfaces, $C_{3,3}$.  The $C_{2,3}$ case is most similar to the results of this paper.  The weight enumerator is given in terms of the sizes of isogeny classes of elliptic curves over $\F_q$ and the results of Birch described in Section \ref{PointCountDistributions} show that the coefficients of the weight enumerator of $C_{2,3}^\perp$ can be expressed in terms of traces of Hecke operators acting on cusp forms for $\SL_2(\Z)$.  One of the interesting aspects of our main result is that by treating rational $2$-torsion points differently we also get contributions from the congruence subgroups $\Gamma_0(2)$ and $\Gamma_0(4)$.

Projective Reed-Solomon and Reed-Muller codes give examples of a more general construction of codes from evaluating polynomials at the $\F_q$-rational points of projective varieties.  This evaluation construction has been extensively studied by Tsfasman and Vl{\u{a}}du{\c{t}}, Lachaud, S{\o}rensen, and others \cite{Lachaud,Sor,TV}.  For much more information, particularly focusing on codes from higher-dimensional varieties, see the survey of Little \cite{Little}.

\section{Weight enumerators of codes from genus one curves}\label{genus1curves}

This section has two parts.  In the first we compute the quadratic residue weight enumerator $\QR_{C_{1,4}}(X,Y,Z)$ of the $5$-dimensional projective Reed-Solomon code.  In the second we give a variation of the classical MacWilliams theorem for this quadratic residue weight enumerator.  

\subsection{The quadratic residue weight enumerator of $C_{1,4}$}\label{RSWEsec}

For non-negative integers $i,j,k$ with $i+j+k=q+1$ let $A_{i,j,k}$ be the number of homogeneous quartics $f_4(x,y)$ that have $i\ \F_q$-rational roots and such that the variety defined by $w^2 = f_4(x,y)$ has exactly $i+2j\ \F_q$-points.  We define the quadratic residue weight enumerator to be
\[
\QR_{C_{1,4}}(X,Y,Z) \eqdef \sum_{\substack{i,j,k\geq 0 \\ i+j+k=q+1}} A_{i,j,k} X^{i} Y^j Z^k.
\]
We compute these coefficients by building slightly on work of Deuring on elliptic curves over a fixed finite field \cite{Deu}.

We first consider quartics $f_4(x,y)$ that have a double root.  In this case, $w^2 = f_4(x,y)$ is singular and counting points on this variety is elementary.  We then compute $\QR_{C_{1,4}}(X,X^2,1)$ which gives the rational point count distribution for the family of varieties being considered, but does not distinguish between points that come from $\F_q$-rational roots of $f_4(x,y)$ and points of $\Proj^1(\F_q)$ at which $f_4(x,y)$ takes a non-zero quadratic residue value.  Finally, we consider elliptic curves with a given number of points and prescribed $2$-torsion structure to compute $\QR_{C_{1,4}}(X,Y,Z)$.

\begin{prop}\label{QRsing}
Let $\QR^{\sing}_{C_{1,4}}(X,Y,Z)$ denote the contribution to $\QR_{C_{1,4}}(X,Y,Z)$ from quartics $f_4(x,y)$ that do not have distinct roots over $\overline{\F}_q$.  Then $\QR^{\sing}_{C_{1,4}}(X,Y,Z)$ is given by 
\begin{small}\begin{eqnarray*}
& & X^{q+1} + \frac{(q-1)(q+1)}{2} X (Y^q+Z^q) + (q-1) q (q+1) X^2 Y^{\frac{q-1}{2}} Z^{\frac{q-1}{2}} \\
&  + & \frac{(q-1)q(q+1)}{4} X^2 (Y^{q-1} + Z^{q-1})  +  \frac{(q-1)^2 q (q+1)}{4} X^3 (Y^{\frac{q-1}{2}} Z^{\frac{q-3}{2}} + Y^{\frac{q-3}{2}} Z^{\frac{q-1}{2}}) \\
& + & \frac{(q-1)^2 q(q+1)}{4} X (Y^{\frac{q+1}{2}} Z^{\frac{q-1}{2}} + Y^{\frac{q-1}{2}} Z^{\frac{q+1}{2}} ) +  \frac{(q-1)^2 q}{4} (Y^{q+1} + Z^{q+1}).
\end{eqnarray*}\end{small}
\end{prop}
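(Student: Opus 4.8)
The plan is to enumerate the singular quartics $f_4(x,y)$ according to the multiplicity pattern of their roots over $\overbar{\F}_q$, and for each type count (a) the number of such quartics, (b) the number $i$ of $\F_q$-rational roots of $f_4$, and (c) the number of $\F_q$-points on the possibly-singular curve $w^2 = f_4(x,y)$, separating the points lying above the roots of $f_4$ from those lying above points where $f_4$ takes a nonzero square value. Since $f_4$ is a binary quartic, "not having distinct roots over $\overbar{\F}_q$" means $f_4$ has a repeated factor, and the possibilities (ignoring a nonzero scalar, of which there are $q-1$) are: a quadruple point; a triple point plus a simple point; two double points; a double point plus two simple points; a double point plus a conjugate pair of simple points. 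One must also remember that $f_4$ is a \emph{homogeneous} quartic in two variables, so "roots" are points of $\Proj^1(\F_q)$ (there are $q+1$ of them), and repeated factors can be $\F_q$-rational or come in Galois-conjugate pairs; I would organize the count by first choosing the location(s) of the repeated root(s) among the $q+1$ projective points (or conjugate pairs among $\binom{q+1}{2}-\tbinom{q+1}{2}$-type counts), then choosing the remaining simple roots, then the scalar.

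For step (c), the key point is elementary: if $f_4 = c\,\ell_1^{e_1}\cdots$ with the $\ell_i$ distinct linear forms over $\overbar\F_q$, then above a point $p\in\Proj^1(\F_q)$ the curve $w^2=f_4(p)$ contributes: one point if $f_4(p)=0$ (i.e. $p$ is a root), two points if $f_4(p)$ is a nonzero square in $\F_q$, and zero points if $f_4(p)$ is a nonsquare. So the exponent of $X$ (number of roots $=i$) is read off directly, and the split $j$ versus $k$ between $Y$ and $Z$ is governed by the quadratic residue symbol of the values $f_4(p)$ as $p$ ranges over the non-root points. For a quartic with a repeated factor, $f_4$ differs from a perfect square times a lower-degree form by the "odd part" of its factorization; e.g. for a double root at $p_0$ and simple roots at $p_1,p_2$, we have $f_4 = c\,\ell_0^2 \ell_1\ell_2$, and $f_4(p)$ is a square iff $c\,\ell_1(p)\ell_2(p)$ is a square, which reduces the residue bookkeeping to counting how often a \emph{product of two distinct linear forms} (or a single linear form, or an irreducible quadratic, depending on the type) takes square values — a standard character-sum / Jacobi-symbol computation, with the answers being essentially $(q-3)/2$, $(q-1)/2$, etc., and depending on $q \bmod 4$ only through parity conventions that cancel after summing symmetrically in $Y,Z$. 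The scalar $c$ ranges over $\F_q^*$; multiplying $c$ by a square fixes the residue pattern while multiplying by a nonsquare swaps $Y\leftrightarrow Z$, which is why every term in the claimed formula is symmetric in $Y$ and $Z$ and why a factor $(q-1)/2$ (or $(q-1)$) appears.

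Concretely I would produce a short table: the quadruple-root case gives the term $\tfrac{(q-1)(q+1)}{2}X(Y^q+Z^q)$ (there are $q+1$ choices of the quadruple point, $q-1$ scalars, split in half by residuosity of $c$, one root so $X^1$, and the remaining $q$ points all see the value $c\cdot(\text{square})$, hence all $Y$ or all $Z$); the triple$+$simple case gives $\tfrac{(q-1)^2 q(q+1)}{4}X(Y^{(q+1)/2}Z^{(q-1)/2}+\cdots)$ and also feeds the $X^2$ terms when the simple root merges considerations — I'd be careful here to separate "triple root, simple root distinct" from degenerate overlaps; the two-double-roots case (both rational, giving $X^2(Y^{q-1}+Z^{q-1})$ type with a $\binom{q+1}{2}$ choosing the pair, hence the $(q-1)q(q+1)/4$) versus the conjugate pair of double roots (no rational root, $X^0$, feeding the $(q-1)^2 q/4 (Y^{q+1}+Z^{q+1})$ term); the double$+$two-simple case feeding the $X^3$ terms; and the double $+$ conjugate-simple-pair feeding the $X$ term $\tfrac{(q-1)^2q(q+1)}{4}X(Y^{(q+1)/2}Z^{(q-1)/2}+\cdots)$ and the middle $X^2 Y^{(q-1)/2}Z^{(q-1)/2}$ term. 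Summing all contributions and collecting by monomial in $X$ should reproduce the stated polynomial; at the end I'd sanity-check by setting $Y=Z=1$ (or using the MDS Hamming weight enumerator of Corollary~5, Ch.~11 of \cite{MacwilliamsSloane}, restricted to the singular-quartic locus) to confirm the total count of singular quartics is $(q-1)$ times the number of monic binary quartics with a repeated factor.

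\medskip
\textbf{Main obstacle.} The bookkeeping is the whole difficulty: there is no single clean argument, just a careful case analysis over the five degeneration types, and within each type one must (i) count configurations in $\Proj^1(\F_q)$ correctly including conjugate pairs, (ii) evaluate the relevant quadratic-character sum (number of $p$ with $c\prod \ell_i(p)$ a nonzero square) without off-by-one errors at the roots and at the point $(0,1)$, and (iii) correctly handle the $q\bmod 4$ dependence of the residue symbol of $-1$ and of the leading scalar. The overlap/degenerate-coincidence cases (e.g. making sure "triple root $+$ simple root" and "double $+$ double" are not double counted, and that a repeated \emph{irreducible quadratic} factor $q(x,y)^2$ is handled — it contributes $f_4 = c\,q^2$, no rational root, and $f_4(p)$ a square iff $c$ is a square, giving another piece of the $(Y^{q+1}+Z^{q+1})$ term) are where errors are easiest to make, so I would cross-check the final answer against a direct machine computation for small $q$ as indicated in the paper's remarks.
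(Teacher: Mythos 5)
Your overall strategy is exactly the paper's: stratify the singular quartics by factorization type over $\overbar{\F}_q$ (quadruple root; triple $+$ simple; two rational double roots; a conjugate pair of double roots, i.e.\ $c\,g^2$ with $g$ an irreducible quadratic; double $+$ two rational simple roots; double $+$ a conjugate pair of simple roots), count each stratum, and read off the residue pattern from the ``odd part'' of the factorization, with the scalar $c$ accounting for the $Y\leftrightarrow Z$ symmetry. The paper's own proof does precisely this, working out only the ``double root plus two other distinct roots'' case and leaving the rest as an exercise, so your proposal is the same route and your guiding principles (the exponent of $X$ equals the number of rational roots; $f_4(p)$ is a square iff $c$ times the squarefree part is) are correct.

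However, your concrete table misattributes two of the terms, in a way that contradicts your own stated principle that the $X$-exponent equals the number of $\F_q$-rational roots. The triple-plus-simple case has exactly two rational roots, so it contributes only to $X^2$; since $c\ell_1^3\ell_2$ is a square at $p$ iff $c\ell_1(p)\ell_2(p)$ is, the $q-2$ non-root values split as $\frac{q-1}{2},\frac{q-1}{2}$ minus nothing (the conic $w^2=c\ell_1\ell_2$ bookkeeping lands exactly on $\frac{q-1}{2}$ each), giving the term $(q-1)q(q+1)\,X^2Y^{\frac{q-1}{2}}Z^{\frac{q-1}{2}}$ --- not the $\frac{(q-1)^2q(q+1)}{4}X(\cdots)$ term you assign to it, whose coefficient and $X$-degree are both wrong for this stratum. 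Dually, the double-root-plus-conjugate-simple-pair case has exactly one rational root and produces $\frac{(q-1)^2q(q+1)}{4}X\bigl(Y^{\frac{q+1}{2}}Z^{\frac{q-1}{2}}+Y^{\frac{q-1}{2}}Z^{\frac{q+1}{2}}\bigr)$; it cannot feed the $X^2Y^{\frac{q-1}{2}}Z^{\frac{q-1}{2}}$ term you also credit to it. With these two attributions swapped (and the coefficient of the triple-plus-simple stratum corrected), the rest of your table is right and the sum reproduces the stated polynomial.
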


\begin{proof}
There is a small list of factorization types of quartics with a double root.  Such a quartic could have a quadruple root, a root of multiplicity three and another rational root, two distinct double roots, or a double root and two other roots.  We work out the details of this last case and leave the others as an exercise.

A quartic with a single double root must have its double root at an \hbox{$\F_q$-rational} point.  The other two roots can then either be at distinct rational points or be a Galois-conjugate pair of points defined over $\F_{q^2}$.  Scaling a quartic by a quadratic residue does not change its contribution to $\QR_{C_{1,4}}(X,Y,Z)$, while scaling by a quadratic non-residue interchanges the number of residue versus non-residue values taken.  We write such a quartic as $f(x,y)^2 g(x,y)$, where $g(x,y)$ is a quadratic polynomial with distinct roots and $f(x,y)$ is a linear form with an $\F_q$-rational root.  The curve $w^2 = g(x,y)$ is a smooth conic, so has $q+1$ rational points.  Therefore, $w^2 = f(x,y) g(x,y)$ has either $q$ or $q+2\ \F_q$-points depending on the value taken by $g(x,y)$ at the rational root of $f(x,y)$.  Combining these observations shows that the contribution to $\QR_{C_{1,4}}(X,Y,Z)$ from quartics with a double root and two other distinct roots is
\[
\frac{(q-1)^2q(q+1)}{4}  \left( X^3 (Y^{\frac{q-1}{2}} Z^{\frac{q-3}{2}} + Y^{\frac{q-3}{2}} Z^{\frac{q-1}{2}}) + X (Y^{\frac{q+1}{2}} Z^{\frac{q-1}{2}} + Y^{\frac{q-1}{2}} Z^{\frac{q+1}{2}} )\right).
\] 
\end{proof}

We now turn to $\QR_{C_{1,4}}(X,X^2,1)$.  We first compute the number of times that a particular isomorphism class of an elliptic curve arises as an equation of the form $w^2 = f_4(x,y)$.

\begin{prop}\label{QuartDC}
Let $E$ be an elliptic curve defined over $\F_q$.  The number of homogeneous quartic polynomials $f_4(x,y)$ such that $w^2 = f_4(x,y)$ gives a curve isomorphic to $E$ is 
\[(q-1) \frac{|\PGL_2(\F_q)|}{|\Aut_{\F_q}(E)|} = \frac{(q-1)^2 q(q+1)}{|\Aut_{\F_q}(E)|}.\]
\end{prop}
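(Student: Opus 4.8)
The plan is to reduce to counting pairs $(f_4,\phi)$, where $\phi\colon C_{f_4}\xrightarrow{\sim}E$ is an isomorphism of curves over $\F_q$, and then to divide by automorphisms. We may assume $\charq q\neq 2$. If $f_4$ has a repeated root then $w^2=f_4(x,y)$ is singular and hence not isomorphic to the elliptic curve $E$, so only separable $f_4$ contribute; for these $C_{f_4}$ is a smooth curve of genus $1$, which by Lang's theorem (every genus-one curve over a finite field has a rational point) is therefore itself an elliptic curve, isomorphic to its Jacobian. For a fixed such $f_4$ with $C_{f_4}\cong E$, the set of isomorphisms $\phi$ is a torsor under post-composition by automorphisms of $E$ as a curve over $\F_q$; that group is $E(\F_q)\rtimes\Aut_{\F_q}(E)$, translations followed by origin-fixing automorphisms, of order $\#E(\F_q)\cdot|\Aut_{\F_q}(E)|$. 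Hence, writing $N$ for the quantity to be computed, $\#\{(f_4,\phi)\}=N\cdot\#E(\F_q)\cdot|\Aut_{\F_q}(E)|$.

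Next I would set up a bijection between these pairs $(f_4,\phi)$ and pairs $(t,u)$ with $t\in\F_q(E)$ a rational function of degree $2$ and $u\in\F_q(E)$ satisfying $u^2=f_4(t,1)$: from $(f_4,\phi)$ one takes $t$ and $u$ to be the images of $x/y$ and $w/y^2$ under the isomorphism of function fields $\F_q(C_{f_4})\xrightarrow{\sim}\F_q(E)$ induced by $\phi$; conversely, from $(t,u)$ one recovers $f_4$ as the degree-$4$ homogenisation of the polynomial $u^2\in\F_q[t]$ and $\phi$ from the identifications $x/y\leftrightarrow t$, $w/y^2\leftrightarrow u$. For a fixed degree-$2$ function $t$, the extension $\F_q(E)/\F_q(t)$ is quadratic, say $\F_q(E)=\F_q(t)\bigl(\sqrt{d(t)}\bigr)$ with $d\in\F_q[t]$ squarefree; Riemann--Hurwitz together with $g(E)=1$ forces $\deg d\in\{3,4\}$, and then the admissible $u$ turn out to be exactly $u=c\sqrt{d(t)}$ for $c\in\F_q^\ast$ --- so there are precisely $q-1$ of them, each giving a genuine separable quartic. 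Thus $\#\{(f_4,\phi)\}=(q-1)\cdot\#\{t\in\F_q(E):\deg t=2\}$.

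It remains to count degree-$2$ functions on $E$. Each has a well-defined polar divisor $D$ of degree $2$, and by Riemann--Roch $L(D)$ is $2$-dimensional for every effective degree-$2$ divisor $D$; the functions with polar divisor exactly $D$ are then the $q^2-q$ elements of $L(D)\setminus\F_q$. The effective divisors of degree $2$ are the $\F_q$-points of $\mathrm{Sym}^2E$, and the Abel--Jacobi map $\mathrm{Sym}^2E\to\Pic^2E$ is a $\Proj^1$-bundle (its fibres are the degree-$2$ complete linear systems), so $\#(\mathrm{Sym}^2E)(\F_q)=(q+1)\,\#\Pic^2(E)(\F_q)=(q+1)\,\#E(\F_q)$. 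Combining everything, $\#\{(f_4,\phi)\}=(q-1)(q+1)\,\#E(\F_q)\,(q^2-q)=\#E(\F_q)\cdot q(q-1)^2(q+1)$, and dividing by $\#E(\F_q)\cdot|\Aut_{\F_q}(E)|$ --- where the two factors of $\#E(\F_q)$ cancel --- yields $N=q(q-1)^2(q+1)/|\Aut_{\F_q}(E)|=(q-1)\,|\PGL_2(\F_q)|/|\Aut_{\F_q}(E)|$.

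The step I expect to be delicate is the automorphism bookkeeping. The factor $\#E(\F_q)$ in the fibre size comes from translations --- automorphisms of $E$ as a curve but not as an elliptic curve --- and it is precisely this factor that cancels the $\#E(\F_q)$ produced by counting degree-$2$ maps $E\to\Proj^1$; dropping the translations, or (equivalently) pretending that a degree-$2$ map is determined by its branch divisor up to $\PGL_2$ rather than by a degree-$2$ line bundle, would break the identity. Verifying the bijection of the second paragraph carefully --- in particular that every degree-$2$ function $t$ with an admissible $u$ really yields a separable quartic $f_4$ together with an honest isomorphism $C_{f_4}\cong E$ --- is the other place where the genuine work lies; the remainder is routine Riemann--Roch, and no step is sensitive to $q\bmod 4$.
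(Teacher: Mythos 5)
Your proof is correct and is at bottom the same double-counting argument as the paper's: both count quartics together with an isomorphism to $E$ and divide by the order $\#E(\F_q)\cdot|\Aut_{\F_q}(E)|$ of the automorphism group of $E$ as a curve, and your numerator $(q-1)\cdot(q+1)\#E(\F_q)\cdot(q^2-q)$ is the paper's $\#E(\F_q)\cdot(q-1)|\PGL_2(\F_q)|$ reorganized (effective degree-$2$ divisors times nonconstant functions times scalars for $u$, versus degree-$2$ divisor classes times bases of the section space). The one genuine refinement is that by carrying the square root $u$ you make explicit why each degree-$2$ map to $\Proj^1$ contributes exactly $q-1$ pairs (quartic, isomorphism) --- the scalar/quadratic-twist bookkeeping that the paper's phrase ``the branch points of this map are the roots of this quartic'' leaves implicit.
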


\begin{proof}
We will phrase this as a double counting argument.  Suppose we begin with an elliptic curve $E$ with $\#E(\F_q) = q+1-t$.  There are exactly $q+1-t$ choices of a degree two divisor class on $E$.  Riemann-Roch implies that such a divisor has a $2$-dimensional space of sections.  Choosing a basis for this space of sections gives a degree $2$ map to $\Proj^1$.  Taking the inverse image of a point in $\Proj^1(\F_q)$ recovers the divisor class.  The branch points of this map are the roots of this quartic.

Now we consider how many maps take a particular equation of the form $w^2 = f_4(x,y)$ to the underlying elliptic curve $E$.  We can recover $E$ with a distinguished identity element and a degree $2$ divisor class $D$ directly from this equation.  Now we take a map that forgets $D$, taking $(E,D)$ to $E$, and note that it is defined only up to an automorphism of $E$ defined over $\F_q$.  Since an automorphism must fix the identity element of $E$, we multiply $|\Aut_{\F_q}(E)|$ by the number of possible choices of identity element, $q+1-t$.  Therefore, given $E$ there are 
\[
\frac{(q+1-t) (q-1) |\PGL_2(\F_q)|}{|\Aut_{\F_q}(E)| (q+1-t)} = \frac{(q-1)|\PGL_2(\F_q)|}{|\Aut_{\F_q}(E)|}
\]
quartics $f_4(x,y)$ with $w^2 = f_4(x,y)$ isomorphic to $E$.
\end{proof}

Let $N(t)$ be the number of $\F_q$-isomorphism classes of elliptic curves within the isogeny class $I(t)$ of curves having $\#E(\F_q) = q+1-t$, and let $N_A(t)$ be the number of $\F_q$-isomorphism classes of elliptic curves in $I(t)$ where each isomorphism class is weighted by $1/|\Aut_{\F_q}(E)|$.  Let $\QR^{S}_{C_{1,4}}(X,Y,Z)$ be the contribution to $\QR_{C_{1,4}}(X,Y,Z)$ from quartics with distinct roots over $\overline{\F}_q$ so that 
\est{ 
\QR_{C_{1,4}}(X,Y,Z) = \QR^{S}_{C_{1,4}}(X,Y,Z) + \QR^{\text{sing}}_{C_{1,4}}(X,Y,Z).
}
\begin{cor}\label{QRNt}
Suppose $q$ is odd.  
We have
\[
\QR^{S}_{C_{1,4}}(X,X^2,1) = \sum_{t^2\leq 4q} N_A(t) (q-1)^2 q (q+1) X^{q+1-t}.
\] 
\end{cor}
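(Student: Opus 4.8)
The plan is to deduce the corollary from the mass formula of Proposition \ref{QuartDC} once the substitution $Y\mapsto X^2,\ Z\mapsto 1$ has been given a geometric meaning. First I would note that codewords of $C_{1,4}$ are in bijection with binary quartic forms $f_4(x,y)$: a nonzero form of degree $4$ on $\Proj^1$ vanishes at no more than $4$ of the $q+1\ge 5$ rational points, so evaluation is injective on the $5$-dimensional space of such forms. Hence
\[
\QR^{S}_{C_{1,4}}(X,Y,Z) = \sum_{f_4} X^{i(f_4)} Y^{j(f_4)} Z^{k(f_4)},
\]
where the sum runs over quartics with distinct roots over $\overline{\F}_q$ and $i,j,k$ count the points of $\Proj^1(\F_q)$ that are, respectively, roots of $f_4$, points where $f_4$ takes a nonzero square value, and points where it takes a nonsquare value; these counts are well defined because rescaling an affine representative multiplies the value of $f_4$ by a fourth power, hence a square. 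Putting $Y=X^2$ and $Z=1$ replaces the $f_4$-term by $X^{i+2j}$.

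Second, I would identify $i+2j$ with a rational point count. For $f_4$ with distinct roots the curve $w^2=f_4(x,y)$ has a smooth projective model $C_{f_4}$ of genus one, and a point-by-point inspection over $\Proj^1(\F_q)$ — an $\F_q$-rational root of $f_4$ gives one point of $C_{f_4}$ (where $w=0$), a nonzero square value gives two, a nonsquare value none, and the point(s) of $C_{f_4}$ lying over $[0:1]$ are controlled in the same way by the value $f_4(0,1)$ — shows $\#C_{f_4}(\F_q)=i+2j$. Thus $\QR^{S}_{C_{1,4}}(X,X^2,1)=\sum_{f_4\ \mathrm{smooth}}X^{\#C_{f_4}(\F_q)}$. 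Since a smooth projective genus one curve over $\F_q$ has a rational point (indeed $\#C_{f_4}(\F_q)\ge q+1-2\sqrt{q}>0$ by the Weil bound, $q$ being odd hence $\ge 3$), $C_{f_4}$ becomes an elliptic curve $E_{f_4}$ after a choice of origin; both the isomorphism class of $E_{f_4}$ and the integer $\#C_{f_4}(\F_q)$ are independent of that choice because translations are isomorphisms of elliptic curves. This gives a well-defined map $f_4\mapsto[E_{f_4}]$ from smooth quartics to $\F_q$-isomorphism classes of elliptic curves over $\F_q$.

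Finally I would group the sum according to $[E_{f_4}]$. Proposition \ref{QuartDC} states that exactly $(q-1)^2q(q+1)/|\Aut_{\F_q}(E)|$ smooth quartics map to a given class of $E$, so
\[
\QR^{S}_{C_{1,4}}(X,X^2,1)=(q-1)^2q(q+1)\sum_{E/\F_q}\frac{1}{|\Aut_{\F_q}(E)|}\,X^{\#E(\F_q)}.
\]
Sorting the $E$ by isogeny class $I(t)$ — each such class consists of curves with $\#E(\F_q)=q+1-t$ and runs over $t^2\le 4q$ by Hasse, while $\sum_{E\in I(t)}1/|\Aut_{\F_q}(E)|=N_A(t)$ by definition — collapses the right-hand side to $(q-1)^2q(q+1)\sum_{t^2\le 4q}N_A(t)X^{q+1-t}$, which is the assertion. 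The only step needing genuine care is the middle one, namely verifying that the three weights $i,j,k$ attached to a quartic tally, point by point over $\Proj^1(\F_q)$ and with the correct contribution at $[0:1]$, with the number of rational points on the smooth projective model $C_{f_4}$; granting Proposition \ref{QuartDC}, everything else is formal rearrangement of a finite sum.
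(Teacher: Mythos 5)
Your proposal is correct and follows the same route the paper intends: substitute $Y=X^2$, $Z=1$ to turn the weight $i+2j$ of a smooth quartic into $\#C_{f_4}(\F_q)$, then group the quartics by the isomorphism class of the associated elliptic curve via Proposition \ref{QuartDC} and by isogeny class to produce $N_A(t)$. The paper treats this as immediate from Proposition \ref{QuartDC} and the definitions of $A_{i,j,k}$ and $N_A(t)$, so your more careful point-by-point verification of $\#C_{f_4}(\F_q)=i+2j$ and of the well-definedness of $f_4\mapsto [E_{f_4}]$ is simply a fuller writing-out of the same argument.
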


We now give expressions for $N_A(t)$ in terms of class numbers of orders in quadratic imaginary fields.  For $d<0$ with $d \equiv 0,1\pmod{4}$, let $h(d)$ denote the class number of the unique quadratic order of discriminant $d$.  Let
\est{h_w(d)\eqdef \begin{cases} h(d)/3, &\text{ if } d = -3; \\ h(d)/2, & \text{ if } d=-4; \\ h(d) & \text{else}, \end{cases}} 
and 
\est{H_w(\Delta) \eqdef \sum_{\substack{d^2 \mid \Delta \\ \Delta/d^2 \equiv 0,1 \pmod* 4}} h_w(\Delta/d^2)} 
be the Hurwitz-Kronecker class number.  

The following result is stated for prime fields $\F_p$ where $p \ge 5$, with slightly different notation, on page 654 of Lenstra's paper \cite{Lenstra}, in which he says that it is basically due to Deuring \cite{Deu}.  The details for the extension to all finite fields are contained in Chapter 4, particularly Theorem 4.5, of the paper of Waterhouse \cite{Waterhouse}.  Schoof gives an unweighted version as Theorem 4.6 in \cite{Schoof}.
\begin{thm}[Sizes of Isogeny Classes with Weights]\label{S46withweights}
Let $t\in \Z$.  Suppose $q = p^v$ where $p \neq 2$ is prime.  Then
\begin{eqnarray*}
2 N_A(t)  = &  H_w(t^2-4q)\ \ \ \ \ \ \ \ \ \ \ \ \ \ \ & \text{if } t^2 < 4q \text{ and } p\nmid t;\\
 = & H_w(-4p) \ \ \ \ \ \ \ \ \ \ \ \ \ \ \  & \text{if } t=0 \\
 = & 1/3 \ \ \ \ \ \ \ \ \ \ \ \ \ \ \  & \text{if } t^2=3q \text{ and } p =3 
\end{eqnarray*}
if $q$ is not a square, and 
\begin{eqnarray*}
2 N_A(t)  = &  H_w(t^2-4q)\ \ \ \ \ \ \ \ \ \ \ \ \ \ \ & \text{if } t^2 < 4q \text{ and } p\nmid t;\\
 = & \left(1 - \legen{-4}{p}\right)/2 \ \ \ \ \ \ \ \ \ \ \ \ \ \ \  & \text{if } t=0 \\
 = & \left(1 - \legen{-3}{p}\right)/3 \ \ \ \ \ \ \ \ \ \ \ \ \ \ \  & \text{if } t^2 = q \\
  = & (p-1)/12\ \ \ \ \ \ \ \ \ \ \ \ \ \ \  & \text{if } t^2 = 4q
\end{eqnarray*}
if $q$ is a square, and $N_A(t) = 0$ in all other cases.
\end{thm}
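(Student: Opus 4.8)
The plan is to deduce the weighted counts $N_A(t)$ from Schoof's unweighted Theorem 4.6 and Proposition 5.7 of \cite{Schoof}, together with the refined data of Proposition \ref{j01728}, splitting into three regimes: the ordinary range $t^2<4q$, $p\nmid t$; the supersingular classes in which the Frobenius $\pi$ generates an imaginary quadratic order; and the exceptional class $t^2=4q$ (necessarily with $q$ a square) in which $\pi\in\Z$. The two facts I will lean on are that $\Aut_{\F_q}(E)=\End_{\F_q}(E)^\times$, and the elementary identity $h(\cO)/|\cO^\times|=\tfrac12\,h_w(\operatorname{disc}\cO)$, valid uniformly for $|\cO^\times|\in\{2,4,6\}$.

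In the ordinary range $\End_{\F_q}(E)=\cO$ for some order $\Z[\pi]\subseteq\cO\subseteq\cO_{\Q(\pi)}$, and Deuring's theory (as in \cite{Schoof}, cf.\ \cite{Deu}) gives $N(t)=\sum_\cO h(\cO)=H(t^2-4q)$, the sum being over the orders that actually occur. Passing to $N_A$ by summing $h(\cO)/|\cO^\times|=\tfrac12 h_w(\operatorname{disc}\cO)$ over the same set of orders yields $2N_A(t)=\sum_\cO h_w(\operatorname{disc}\cO)=H_w(t^2-4q)$. Equivalently, $2N_A(t)$ differs from $N(t)$ only through the curves of $j$-invariant $0$ or $1728$ occurring in $I(t)$ --- at most one of each by Proposition \ref{j01728} --- and the relevant corrections $\tfrac23$ and $\tfrac12$ are precisely the difference between the Kronecker class number $H$ and the Hurwitz--Kronecker $H_w$ at the discriminants $-3$ and $-4$.

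For the supersingular classes with $\pi\notin\Z$ (namely $t=0$ with $q$ not a square, $t^2=q$ with $q$ a square, and $t^2=3q$, which forces $p=3$) the key remark is that $\End_{\F_q}(E)$, being the centraliser of $\pi$ in the quaternion algebra $B_{p,\infty}$, is still an order in the imaginary quadratic field $\Q(\pi)=\Q(\sqrt{t^2-4q})$, so $|\Aut_{\F_q}(E)|\le 6$ even in characteristic $3$, where the geometric automorphism group has order $12$. For $t=0$ one works in $\Q(\sqrt{-p})$: the orders that occur are exactly those (of conductor $1$ or $2$) accounted for by $H_w(-4p)$, and when $p>3$ every curve has $|\Aut|=2$, so $2N_A(0)=N(0)=H(-4p)=H_w(-4p)$, while for $p=3$ one checks the two curves with $q+1$ points (with $2$-torsion $\Z/2\Z$, resp.\ $(\Z/2\Z)^2$ by Proposition \ref{j01728}) have automorphism groups of orders $2$ and $6$, giving $2N_A(0)=\tfrac43=H_w(-12)$. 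For $t^2=q$ and $t^2=3q$ one works in $\Q(\sqrt{-3})$: the isogeny class consists only of curves of $j$-invariant $0$ with endomorphism ring the maximal order $\Z[\zeta_3]$, so $|\Aut_{\F_q}(E)|=6$ throughout and $2N_A(t)=N(t)/3$ equals $(1-\legen{-3}{p})/3$, resp.\ $\tfrac13$; the case $t=0$ with $q$ a square is identical with $\Q(\sqrt{-1})$ replacing $\Q(\sqrt{-3})$. Finally, for $t^2=4q$ with $q$ a square one has $\pi=\pm\sqrt q\in\Z$, so $\End_{\F_q}(E)=\End_{\overline{\F}_q}(E)$ is a maximal order in $B_{p,\infty}$; the Eichler--Deuring mass formula $\sum 1/|\Aut_{\overline{\F}_q}(E)|=(p-1)/24$ over supersingular classes, together with the quadratic twist identifying $I(2\sqrt q)$ with $I(-2\sqrt q)$ and preserving automorphism groups, gives $N_A(\pm 2\sqrt q)=(p-1)/24$, hence $2N_A(2\sqrt q)=(p-1)/12$. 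In all remaining cases $I(t)=\emptyset$ by Waterhouse's classification of Weil numbers, so $N_A(t)=0$.

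I expect the main obstacle to be the last-mile bookkeeping for the curves of $j$-invariant $0$ and $1728$, especially in characteristic $3$ where $j=0=1728$: one must determine exactly which orders occur as $\End_{\F_q}(E)$ in each supersingular isogeny class, check that the associated automorphism groups are the predicted $2$, $4$, or $6$ rather than the geometric $12$, and match the number and traces of the $\F_q$-forms of the exceptional geometric curves against Proposition \ref{j01728} --- hence against Schoof's Proposition 5.7 and Lemma 4.8. Once those finitely many curves are under control, the rest is the uniform torsor/mass-formula computation above.
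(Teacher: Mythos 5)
Your proposal is correct and follows essentially the same route as the paper: the ordinary and quadratic-endomorphism cases reduce to the observation that the weights $1/|\cO^\times|$ are exactly the weights built into $H_w$, the case $t^2=4q$ is the Eichler--Deuring mass computation (which the paper carries out explicitly by counting supersingular $j$-invariants via Proposition \ref{j01728}), and the case $t^2=3q$, $p=3$ is checked by hand. Your treatment is more detailed than the paper's terse proof but contains no new idea and no gap.
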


As above, let $N_{A, 2\times2}(t)$ denote the number of $\F_q$-isomorphism classes of elliptic curves over $\F_q$ with $E(\F_q)[2] \cong \Z/2\Z \times \Z/2\Z$ and where each class is weighted by the size of the automorphism group of a curve in that class.  The count of unweighted isomorphism classes of elliptic curves with full \hbox{$2$-torsion} is given in Lemma 4.8 of Schoof \cite{Schoof}.  Similar arguments give the weighted version, or paying particular attention to curves of $j$-invariant $0$ and $1728$ with $|\Aut_{\F_q} E| > 2$ we can easily deduce the weighted statement from the unweighted one.

\begin{lemma}\label{Schoof1}
Let $q = p^v$ where $p \neq 2$ is prime. Suppose that $t\in \Z$ satisfies $t^2 \le 4q$.
\begin{enumerate}
\item If $p\nmid t$ and $t\equiv q+1 \pmod{4}$, then 
\[
2 N_{A,2\times 2}(t) = H_w\left(\frac{t^2 - 4q}{4}\right).
\]
\item If $t^2 = q,2q$, or $3q$, then $N_{A,2 \times 2}(t) = 0$.
\item If $t^2 = 4q$ then $N_{A,2 \times 2}(t) = N_A(t)$.
\item Let $t=0$. If $q\equiv 1 \pmod{4}$ then $N_{A,2 \times 2}(t) = 0$.  If $q \equiv 3\pmod{4}$ then $2N_{A,2\times 2}(t) = h_w(-p)$. 
\end{enumerate}
Otherwise we have $N_{A,2 \times 2}(t)=0$.
\end{lemma}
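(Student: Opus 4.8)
The plan is to translate the condition ``$E(\F_q)[2]\cong(\Z/2\Z)^2$'' into a condition on $\End_{\F_q}(E)$ and then re-run, with one modification, the counting behind Theorem~\ref{S46withweights}. First I would record the following criterion: if $\phi\in\End_{\F_q}(E)$ is the $q$-power Frobenius, so that $E(\F_q)=\ker(\phi-1)$ and $\phi^2-t\phi+q=0$, then since $[2]\colon E\to E$ is separable with kernel $E[2]$, one has $E(\F_q)[2]\cong(\Z/2\Z)^2$ iff $E[2]\subseteq\ker(\phi-1)$ iff $\phi-1$ factors through $[2]$ iff $(\phi-1)/2\in\End_{\F_q}(E)$, equivalently iff $\Z[(\phi-1)/2]\subseteq\End_{\F_q}(E)$ (note $\Z[(\phi-1)/2]\supseteq\Z[\phi]$ always). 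A direct computation of the minimal polynomial of $(\phi-1)/2$ over $\Q$ shows it is integral iff $t\equiv q+1\pmod{4}$, in which case $\cO':=\Z[(\phi-1)/2]$ is an order of discriminant $(t^2-4q)/4$. This already yields $N_{A,2\times 2}(t)=0$ whenever $t\not\equiv q+1\pmod{4}$; combined with the trivial bound $N_{A,2\times 2}(t)\le N_A(t)$ and Theorem~\ref{S46withweights} (which makes $N_A(t)=0$ outside the listed ranges), it handles all the ``otherwise'' cases.

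For part~(1), assume $p\nmid t$ and $t\equiv q+1\pmod{4}$. By the criterion the curves in $I(t)$ with full rational $2$-torsion are exactly those $E$ with $\cO'\subseteq\End_{\F_q}(E)\subseteq\cO_K$, where $K=\Q(\phi)$. I would then copy the Deuring-correspondence argument used to prove Theorem~\ref{S46withweights}: for each order $\cO$ with $\Z[\phi]\subseteq\cO\subseteq\cO_K$ the $\F_q$-isomorphism classes in $I(t)$ with $\End_{\F_q}(E)\cong\cO$ are in bijection with $\Pic(\cO)$ and satisfy $|\Aut_{\F_q}(E)|=|\cO^\times|$, so their total $1/|\Aut|$-weight is $h(\cO)/|\cO^\times|=\tfrac12 h_w(\operatorname{disc}(\cO))$; the curves with $j=0$ or $1728$ are precisely the ones with an order of discriminant $-3$ or $-4$, and the Hurwitz weights do that accounting automatically. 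Summing now only over the orders between $\cO'$ and $\cO_K$, whose discriminants run exactly over $(t^2-4q)/(4f^2)$ with $f$ dividing the conductor of $\cO'$, gives
\[
2N_{A,2\times 2}(t)=\sum_{\cO'\subseteq\cO\subseteq\cO_K}h_w(\operatorname{disc}(\cO))=H_w\!\left(\tfrac{t^2-4q}{4}\right)
\]
by the definition of $H_w$, which is part~(1).

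The supersingular cases ($p\mid t$) I would treat by hand. If $t^2=q$ or $t^2=3q$ (the case $t^2=2q$ cannot occur since $p\neq 2$), then $t$ is odd --- $t=\pm\sqrt q$ with $v$ even in the first case, $p=3$ and $v$ odd in the second --- so $\#E(\F_q)=q+1-t$ is odd, $E(\F_q)[2]=\{O\}$, and $N_{A,2\times 2}(t)=0$: this is part~(2). If $t^2=4q$, then $\phi^2-t\phi+q=(\phi-t/2)^2=0$ in the division algebra $\End^0_{\F_q}(E)$, forcing $\phi=t/2=\pm\sqrt q\in\Z$; hence $E(\F_q)=\ker((t/2-1)\,\mathrm{id})=E[\,|t/2-1|\,]\cong(\Z/|t/2-1|\Z)^2$ with $|t/2-1|$ even, so $E[2]\subseteq E(\F_q)$ for \emph{every} $E$ in $I(t)$ and $N_{A,2\times 2}(t)=N_A(t)$: this is part~(3). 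For $t=0$: if $q\equiv 1\pmod{4}$ then $\#E(\F_q)=q+1\equiv 2\pmod{4}$, so $N_{A,2\times 2}(0)=0$; if $q\equiv 3\pmod{4}$ then $p\equiv 3\pmod{4}$ and $v$ is odd, $K=\Q(\sqrt{-p})$, $\operatorname{disc}(\cO_K)=-p$, and by the criterion $\cO'$ has discriminant $-q=-p^v$, i.e. conductor $p^{(v-1)/2}$ in $\cO_K$. Invoking the supersingular part of the proof of Theorem~\ref{S46withweights} --- Waterhouse's description of the orders that occur as $\End_{\F_q}(E)$ in $I(0)$ over $\F_{p^v}$ with $v$ odd, whose conductors are prime to $p$ --- one sees that the only such order containing $\cO'$ is $\cO_K$ itself, so the weighted count of curves with full rational $2$-torsion equals that of curves with $\End_{\F_q}(E)\cong\cO_K$, which is $\tfrac12 h_w(-p)$: this is part~(4).

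The two points that are not purely mechanical, and hence the main obstacles, are exactly the two places where one cannot simply reuse the ordinary count. The first is the $t=0$, $q\equiv 3\pmod{4}$ subcase: one must pin down which endomorphism orders actually occur for supersingular curves over $\F_{p^v}$ with $v$ odd in order to discard the non-maximal orders between $\cO'$ and $\cO_K$; this is the same supersingular input Schoof uses and that Theorem~\ref{S46withweights} already depends on. The second is the automorphism-weight bookkeeping for $j=0$ and $j=1728$ curves, which in characteristic $3$ (the case $p=3$, $v$ odd of part~(4)) requires a direct check with Schoof's Theorem~4.3, exactly as in the proof of Theorem~\ref{S46withweights}. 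Everything else --- the $2$-torsion criterion, the reweighting in part~(1), and the parity and division-algebra observations in part~(2)--(3) --- is routine.
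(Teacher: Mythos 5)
Your argument is correct, and it is more self-contained than what the paper actually does: the paper gives no proof of Lemma~\ref{Schoof1} beyond the remark that it is the weighted version of Lemma~4.8 of \cite{Schoof}, obtained by adjusting the automorphism weights for the classes with $j$-invariant $0$ or $1728$ via the case analysis of Proposition~\ref{j01728}. You instead re-derive the statement from scratch: the criterion $E(\F_q)[2]\cong(\Z/2\Z)^2$ if and only if $(\phi-1)/2\in\End_{\F_q}(E)$, the computation that $\Z[(\phi-1)/2]$ is an order of discriminant $(t^2-4q)/4$ precisely when $t\equiv q+1\pmod 4$, and the Deuring correspondence giving weight $h(\cO)/|\cO^\times|=\tfrac12 h_w(\operatorname{disc}\cO)$ to each intermediate order, so that the sum over orders between $\Z[(\phi-1)/2]$ and $\cO_K$ reproduces $H_w\bigl((t^2-4q)/4\bigr)$ term by term. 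This is in substance Schoof's own proof of his Lemma~4.8 with the weights built in from the start, so the $j=0,1728$ bookkeeping that the paper handles through Proposition~\ref{j01728} is absorbed automatically into the definition of $h_w$; what it costs you is having to redo the supersingular cases by hand, and your treatment of those (parity for $t^2=q,3q$; Frobenius equal to the scalar $t/2$ when $t^2=4q$; the restriction to the orders of discriminant $-4p$ and $-p$ when $t=0$ and $q\equiv 3\pmod 4$, of which only $\cO_K$ contains $\Z[(\phi-1)/2]$) is also correct and relies on exactly the same Waterhouse/Schoof input that Theorem~\ref{S46withweights} already uses.
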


We now turn to the full computation of $\QR_{C_{1,4}}(X,Y,Z)$. 
The main problem we need to solve is the following.  Suppose that there are $M$ smooth quartics $f_4(x,y)$ such that $w^2 = f_4(x,y)$ has exactly $q+1-t\ \F_q$-points.  Let $M_k$ be the number of these quartics with $k\ \F_q$-rational roots, so $M_0+M_1+M_2 + M_3 + M_4 = M$.  We need the individual values of the $M_i$.  If a quartic $f_4(x,y)$ defined over $\Proj^1(\F_q)$ has $4$ distinct roots and $3$ of them are $\F_q$-rational then the fourth root is also $\F_q$-rational.  Therefore, $M_3 = 0$ and we can determine $M_1$ using a very elementary observation.

\begin{lemma}
Suppose that $q+1-t$ is odd and that there are $M$ smooth quartics $f_4(x,y)$ such that $w^2 = f_4(x,y)$ has exactly $q+1-t\ \F_q$-points.  Then $M_1 = M$ and $M_0 = M_2 = M_4 = 0$.
Suppose that $q+1-t$ is even and that there are $M$ smooth quartics $f_4(x,y)$ such that $w^2 = f_4(x,y)$ has exactly $q+1-t\ \F_q$-points.  Then $M_1 = 0$.
\end{lemma}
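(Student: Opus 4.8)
The plan is a parity count, matching the paper's description of this as "a very elementary observation." First I would recall the dictionary between the geometry of $w^2=f_4(x,y)$ and the values of $f_4$ on $\Proj^1(\F_q)$: over a point $P\in\Proj^1(\F_q)$ the fibre of the smooth model contains exactly one point if $f_4(P)=0$, two points if $f_4(P)$ is a nonzero square, and no points if $f_4(P)$ is a nonzero non-square. Here "$f_4(P)$ is a square" is well-defined on $\Proj^1$ precisely because $f_4$ is homogeneous of the even degree $4$, so rescaling a representative of $P$ multiplies the value by a fourth power. Summing over $P$ recovers the count used in the definition of $A_{i,j,k}$: the curve has $i+2j$ rational points, where $i$ is the number of $\F_q$-rational roots of $f_4$ and $j$ is the number of points of $\Proj^1(\F_q)$ at which $f_4$ takes a nonzero square value. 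The key consequence is that the number of $\F_q$-points of $w^2=f_4(x,y)$ is congruent to $i$ modulo $2$, since each non-root point of $\Proj^1(\F_q)$ contributes an even number ($0$ or $2$) and each of the $i$ rational roots contributes exactly $1$.

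Next I would invoke the constraint on $i$ for a smooth (equivalently, separable) quartic. The absolute Galois group $\Gal(\overline{\F}_q/\F_q)$ permutes the four distinct roots of $f_4$, and it cannot fix exactly three of them; hence $i\in\{0,1,2,4\}$, which is exactly the statement $M_3=0$ already recorded just before the lemma.

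Combining the two facts: each of the $M$ quartics under consideration yields a curve with $q+1-t$ rational points, so for each of them $i\equiv q+1-t\pmod 2$. If $q+1-t$ is odd, then $i$ is odd, and $i\in\{0,1,2,4\}$ forces $i=1$ for every such quartic; therefore $M_1=M$ and $M_0=M_2=M_4=0$. If $q+1-t$ is even, then $i$ is even, so $i\neq 1$ for every such quartic, i.e.\ $M_1=0$.

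I do not expect a genuine obstacle here. The only points needing (minor) care are the well-definedness of the square/non-square status of a homogeneous quartic at a point of $\Proj^1$ and the behaviour of the fibre over the point(s) at infinity; both are subsumed by the point-count formula "number of points $=i+2j$" that the paper has already fixed in defining the coefficients $A_{i,j,k}$, so I would simply cite that rather than re-derive it.
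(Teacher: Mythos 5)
Your argument is correct and is essentially identical to the paper's: both rest on the observation that the point count equals $i+2j$, hence $i\equiv q+1-t\pmod 2$, combined with the fact (recorded just before the lemma) that $M_3=0$ because Galois conjugation cannot fix exactly three of four distinct roots. The extra care you take about well-definedness of the square class on $\Proj^1$ and the fibre over a root is a harmless elaboration of what the paper treats as given.
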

\begin{proof}
The number of $\F_q$-rational points of $w^2 = f_4(x,y)$ is the number of $\F_q$-rational roots of $f_4(x,y)$ plus twice the number of points of $\Proj^1(\F_q)$ for which the quartic takes a non-zero square value.  Therefore, if $q+1-t$ is odd, then the number of roots of $f_4(x,y)$ is odd.  If $q+1-t$ is even, then the number of roots of $f_4(x,y)$ is even.
\end{proof}

We suppose that $q+1-t$ is even and determine how these $M$ quartics break up into those that have $0,2$, and $4\ \F_q$-rational roots.  We first note that for an elliptic curve in affine Weierstrass form $y^2 = f(x) =  x^3 + ax + b$, the roots of the homogeneous quartic $y(x^3+ax y^2 + by^3)$ give the $2$-torsion points of $E$.  When we consider curves given by $w^2 = f_4(x,y)$, a homogeneous quartic on $\Proj^1(\F_q)$ there is a similar correspondence between roots of $f_4(x,y)$ and $2$-torsion points of $E$.

\begin{lemma}\label{2tors}
Let $E$ be an elliptic curve defined over $\F_q$ and suppose that there are $M$ quartics $f_4(x,y)$ with $w^2 = f_4(x,y)$ isomorphic to $E$.  Let $M = M_0 + M_2 + M_4$, where $M_k$ is the number of quartics with $k\ \F_q$-rational roots.
\begin{enumerate}
\item If $E(\F_q)[2] \cong \Z/2\Z$ then $M_0 = M_2 = \frac{M}{2}$ and $M_4 = 0$.
\item If $E(\F_q)[2] \cong \Z/2\Z \times \Z/2\Z$ then $M_0 = \frac{3M}{4}, M_2 = 0$, and $M_4 = \frac{M}{4}$.
\end{enumerate}
\end{lemma}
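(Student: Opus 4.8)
The plan is to read off the number of $\F_q$-rational roots of a quartic from the $2$-division structure of $E$, and then to count. Given a quartic $f_4$ with distinct roots over $\overline{\F}_q$ such that $C:=\{w^2=f_4(x,y)\}\cong E$, the projection $\pi\colon C\to\Proj^1$ is separable of degree $2$ (recall $q$ is odd) with branch divisor $\operatorname{div}(f_4)$; since a rational branch point has a unique, hence rational, preimage, the $\F_q$-rational roots of $f_4$ are exactly the $\F_q$-rational ramification points of $\pi$. Choosing an identity $O$ on $C$, so that it becomes an elliptic curve, and setting $D=\pi^{-1}(\mathrm{pt})$ and $\delta=[D-2O]\in\Pic^0(C)$, the ramification points of $\pi$ are the solutions $P$ of $2P=\delta$ for the group law on $C$. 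Over $\overline{\F}_q$ these form a coset of $C[2]$, of size $4$, and the number of $\F_q$-rational ones is $|E(\F_q)[2]|$ if $\delta\in 2\Pic^0(C)$ and is $0$ otherwise. Hence $f_4$ has either $|E(\F_q)[2]|$ or $0$ rational roots; in case $(1)$ this number lies in $\{0,2\}$, forcing $M_4=0$, and in case $(2)$ it lies in $\{0,4\}$, forcing $M_2=0$.

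The key well-definedness point is that whether $\delta\in 2\Pic^0(C)$ --- equivalently, whether $f_4$ attains its maximal root count $|E(\F_q)[2]|$ --- depends only on $f_4$: replacing the identity changes $\delta$ by an element of $2\Pic^0(C)$, and any isomorphism $\psi\colon(C,O)\xrightarrow{\sim}(E,O_E)$ of elliptic curves induces a group isomorphism $\psi_*\colon\Pic^0(C)\to E(\F_q)$ carrying $2\Pic^0(C)$ onto $2E(\F_q)$ (an automorphism of $E$ fixes $2E(\F_q)$, being a group automorphism). Thus every quartic $f_4$ with $\{w^2=f_4\}\cong E$ carries a well-defined truth value ``$\psi_*\delta\in 2E(\F_q)$'', which coincides with ``$f_4$ has $|E(\F_q)[2]|$ rational roots''.

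It remains to count, by revisiting the double count behind Proposition \ref{QuartDC} while tracking this truth value. That argument furnishes a bijection between rigidified quartics --- triples $(f_4,\ \text{identity }O_C\in C(\F_q),\ \text{isomorphism }\psi\colon(C,O_C)\xrightarrow{\sim}(E,O_E))$, of which each quartic has exactly $(q+1-t)\,|\Aut_{\F_q}(E)|$ --- and pairs $(D',\,\text{basis of }H^0(E,\mathcal O(D')))$ with $D'$ a degree-$2$ divisor class on $E$, the bijection matching $D'=\psi_*D$. On the quartic side the condition $\psi_*\delta\in 2E(\F_q)$ depends only on $f_4$, so it is met by all $(q+1-t)\,|\Aut_{\F_q}(E)|$ rigidifications of each of the $M_{|E(\F_q)[2]|}$ maximal-root quartics and by no others; on the divisor-class side it selects the $|2E(\F_q)|=(q+1-t)/|E(\F_q)[2]|$ classes $D'$ with $[D'-2O_E]\in 2E(\F_q)$, each still carrying its full $|\GL_2(\F_q)|$ bases. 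Equating the two counts and using $M=|\GL_2(\F_q)|/|\Aut_{\F_q}(E)|$ from Proposition \ref{QuartDC} yields $M_{|E(\F_q)[2]|}=M/|E(\F_q)[2]|$; since the remaining $M_k$ vanish, this gives $M_2=M_0=M/2$ in case $(1)$ and $M_4=M/4$, $M_0=3M/4$ in case $(2)$.

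The step demanding the most care is this last one: one must verify that the correspondence underlying Proposition \ref{QuartDC} really does match a rigidified quartic with a pair $(D',\,\text{basis})$ in a way compatible --- up to a universal translation --- with the class of $D'-2O_E$ in $E(\F_q)/2E(\F_q)$, and in particular that the scaling factor $q-1$ relating a quartic to its branch divisor is spread evenly over that quotient. One should also note that for $j=0$ or $1728$ an extra automorphism of $E$ can permute the non-trivial cosets of $2E(\F_q)$ among themselves, which is exactly why the count must be organized around the trivial coset only, as above. The geometric dictionary of the first two paragraphs is, by comparison, routine once one keeps track of the identity element.
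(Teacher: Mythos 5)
Your proposal is correct and follows essentially the same route as the paper: identify the rational roots of $f_4$ with rational points $Q$ satisfying $2Q = \delta$ for the degree-two divisor class, observe that this count is $|E(\F_q)[2]|$ or $0$ according to whether the class lies in $2E(\F_q)$, and use the uniform distribution of quartics over divisor classes from Proposition \ref{QuartDC}. The paper phrases the dichotomy as counting preimages of $P$ under the doubling map as $P$ ranges over $E(\F_q)$, which is the same computation; your additional care about well-definedness under change of identity and automorphisms is implicit in the paper's double count.
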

\begin{proof}
We describe how to find all quartics $f_4(x,y)$ with $w^2 = f_4(x,y)$ isomorphic to $E$. Riemann-Roch implies that a degree $2$ divisor on $E$ has a $2$-dimensional space of sections.  Given such a divisor, choosing a basis for this space of sections gives a degree $2$ map to $\Proj^1$.  We take this divisor to be $(O) + (P)$, where $O$ is the identity element of the group law of $E$ and $P$ is another $\F_q$-rational point of $E$.  

A point $P \in E(\F_q)$ gives a map from $E$ to $\Proj^1$ by taking sections of the divisor $(O)+(P)$.  A root of this quartic corresponds to a point $Q \in E(\overline{\F}_q)$ with $2Q \sim O + P$, or $2Q = P$ in the group law on the curve.

We vary over all choices of $P$ and consider how many $Q$ occur as points with $2Q = P$.  If $\#E(\F_q)$ is odd, then the map $P \rightarrow 2P$ is an isomorphism, so every $P$ gives exactly one such $Q$.  If $\#E(\F_q)$ is even then there are two possibilities for the group structure of $E(\F_q)[2]$.  If $E(\F_q)[2] \cong \Z/2\Z$ then $1/2$ of points of $E(\F_q)$ have $0$ preimages under the map $P \rightarrow 2P$, and $1/2$ have exactly $2$.  If ${E(\F_q)[2] \cong \Z/2\Z \times \Z/2\Z}$ then $1/4$ of points of $E(\F_q)$ have $4$ preimages under the map $P \rightarrow 2P$, and $3/4$ have none. 
\end{proof}

We can now state the main result about $\QR^S_{C_{1,4}}(X,Y,Z)$.  
\begin{thm}\label{QRWt}
Suppose $q = p^v$ where $p$ is an odd prime.  Then $\QR^{S}(X,Y,Z)$ is equal to $(q-1)^2 q (q+1)$ times
\begin{eqnarray*}
& & \sum_{\stackrel{t^2\leq 4q}{t\equiv 1\pmod*{2}}} N_A(t)  X Y^{\frac{q-t}{2}} Z^{\frac{q+t}{2}} \\
& & + \sum_{\stackrel{t^2\leq 4q}{t\equiv 0\pmod*{2}}}  \bigg( (N_A(t) - N_{A,2\times 2}(t))  \left(\frac{1}{2}X^2 Y^{\frac{q-1-t}{2}} Z^{\frac{q-1+t}{2}} + \frac{1}{2}Y^{\frac{q+1-t}{2}} Z^{\frac{q+1+t}{2}}\right) \\
& & +  N_{A,2\times2}(t) 
\left(\frac{1}{4} X^4 Y^{\frac{q-3-t}{2}} Z^{\frac{q-3+t}{2}} + \frac{3}{4}Y^{\frac{q+1-t}{2}} Z^{\frac{q+1+t}{2}}\right)\bigg).
\end{eqnarray*}
\end{thm}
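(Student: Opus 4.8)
The plan is to assemble $\QR^S_{C_{1,4}}(X,Y,Z)$ by summing, over each isogeny class $I(t)$ with $t^2 \le 4q$, the contribution of all smooth quartics $f_4(x,y)$ such that $w^2 = f_4(x,y)$ is isomorphic to some curve in $I(t)$, and then sorting these quartics by their number of $\F_q$-rational roots. First I would fix an isogeny class $I(t)$ and an isomorphism class of curve $E$ in it. By Proposition~\ref{QuartDC}, the number of quartics $f_4$ with $w^2 = f_4$ isomorphic to $E$ is $M = (q-1)^2 q (q+1)/|\Aut_{\F_q}(E)|$. A curve $E$ in $I(t)$ has $\#E(\F_q) = q+1-t$ points, so a quartic $f_4$ with $k$ rational roots and $w^2 = f_4 \cong E$ contributes a monomial $X^k Y^j Z^l$ where $j + l = q+1-k$ and (since $\#E(\F_q) = k + 2j$) we get $j = \frac{q+1-t-k}{2}$, $l = \frac{q+1+t-k}{2}$. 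Wait—more carefully, scaling $f_4$ by a nonresidue swaps the roles of $Y$ and $Z$, and the two scalings occur equally often among the $(q-1)$ scalar multiples, so the contributions come in symmetric pairs $Y^j Z^l + Y^l Z^j$ with $j+l = q+1-k$; I will need to keep track of which of $j,l$ is which, but the statement is manifestly symmetric under $Y \leftrightarrow Z$ combined with $t \leftrightarrow -t$, so it suffices to record the pair and let the sum over $\pm t$ handle the symmetrization. This is where the even/odd split of $q+1-t$ enters.

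Next I would invoke the three structural lemmas. When $t$ is odd, $q+1-t$ is even—no wait, $q$ is odd so $q+1$ is even, hence $q+1-t$ is odd exactly when $t$ is odd. By the elementary lemma, in the odd case every such quartic has exactly one rational root, $M_1 = M$ and $M_0 = M_2 = M_4 = 0$; this produces the term $X Y^{(q-t)/2} Z^{(q+t)/2}$ after summing over all $E \in I(t)$ weighted by $1/|\Aut|$, i.e.\ with total weight $N_A(t) \cdot (q-1)^2 q (q+1)$. When $t$ is even, $q+1-t$ is odd—no: $q+1$ even, $t$ even, so $q+1-t$ is even, $M_1 = 0$, and Lemma~\ref{2tors} governs the split between $M_0, M_2, M_4$ according to the $2$-torsion structure of $E$. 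Curves with $E(\F_q)[2] \cong \Z/2\Z$ give $M_0 = M_2 = M/2$, $M_4 = 0$; curves with full $2$-torsion give $M_0 = 3M/4$, $M_2 = 0$, $M_4 = M/4$. Summing over $E$ in $I(t)$: the curves with full $2$-torsion contribute total weight $N_{A,2\times2}(t)(q-1)^2 q(q+1)$ and the rest contribute $(N_A(t) - N_{A,2\times2}(t))(q-1)^2 q(q+1)$, which is exactly the decomposition displayed in the theorem, with $k=0$ giving the $Y^{(q+1-t)/2}Z^{(q+1+t)/2}$ terms, $k=2$ giving the $X^2 Y^{(q-1-t)/2}Z^{(q-1+t)/2}$ term, and $k=4$ giving the $X^4 Y^{(q-3-t)/2}Z^{(q-3+t)/2}$ term.

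The remaining point to verify carefully is the bookkeeping of the $Y/Z$ exponents and the factor $\tfrac12$ (resp.\ $\tfrac14, \tfrac34$): I must check that among the $q-1$ nonzero scalings of a fixed $f_4$, exactly half give a residue-value pattern and half the non-residue pattern, so that the "$M/2$ with $k=2$" quartics split into $M/4$ contributing $X^2 Y^a Z^b$ and $M/4$ contributing $X^2 Y^b Z^a$—but then $M_0$'s $M/2$ quartics also split, and together with the sum over $t$ and $-t$ the factors $\tfrac12$ and $\tfrac12$ in the first big parenthesis and $\tfrac14, \tfrac34$ in the second emerge. I expect the main obstacle (really the only nonroutine part) to be confirming that the roots of $f_4$ match $2$-torsion in the way Lemma~\ref{2tors} requires *uniformly as $E$ ranges over the isogeny class*, including the curves with $j=0, 1728$ and the supersingular ones where $\Aut_{\F_q}(E)$ is larger; but Proposition~\ref{j01728} and Lemma~\ref{Schoof1} have already been set up precisely so that $N_{A,2\times2}(t)$ correctly incorporates those curves' automorphism weights, so this reduces to checking the argument of Lemma~\ref{2tors} is insensitive to extra automorphisms (automorphisms fix $O$ and permute the $2$-torsion, hence permute the relevant quartics without changing the counts $M_0 : M_2 : M_4$). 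Assembling these pieces and summing over $t$ with $t^2 \le 4q$ yields the stated formula.
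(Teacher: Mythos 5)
Your proposal is correct and follows the same route the paper intends: Theorem \ref{QRWt} is assembled directly from Proposition \ref{QuartDC} (each class $E$ accounts for $M=(q-1)^2q(q+1)/|\Aut_{\F_q}(E)|$ quartics), the parity lemma (odd $t$ forces $M_1=M$), Lemma \ref{2tors} (the $\tfrac12,\tfrac12$ and $\tfrac14,\tfrac34$ ratios), and the observation that for even $t$ every curve in $I(t)$ has a nonzero rational $2$-torsion point, so $N_A(t)-N_{A,2\times2}(t)$ counts exactly the classes with $E(\F_q)[2]\cong\Z/2\Z$.

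One clarification on the point you flag as needing verification: there is no further splitting of the $M_k$ quartics into two residue patterns. For a fixed isomorphism class $E$ with trace $t$ and a quartic with $k$ rational roots, the number of points of $\Proj^1(\F_q)$ where $f_4$ takes a nonzero square value is \emph{forced} to equal $\frac{q+1-t-k}{2}$ by $\#E(\F_q)=k+2j$, so every such quartic contributes the single monomial $X^kY^{\frac{q+1-t-k}{2}}Z^{\frac{q+1+t-k}{2}}$; the nonresidue rescalings of $f_4$ are not among the $M$ quartics attached to $E$ at all, but belong to the quadratic twist of $E$, which lies in $I(-t)$. This is exactly why the formula is symmetric under $Y\leftrightarrow Z$ only in conjunction with $t\leftrightarrow -t$, and the coefficients $\tfrac12,\tfrac12,\tfrac14,\tfrac34$ are precisely the ratios $M_2/M$, $M_0/M$, $M_4/M$, $M_0/M$ from Lemma \ref{2tors} with no additional halving. (By the symmetry $N_A(t)=N_A(-t)$ and $N_{A,2\times2}(t)=N_{A,2\times2}(-t)$ your extra split would coincidentally produce the same totals, but the intermediate claim is false and the verification you describe would not go through as stated.)
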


Combining Proposition \ref{QRsing}, Theorem \ref{S46withweights}, Lemma \ref{Schoof1}, and Theorem \ref{QRWt} completely determines $\QR_{C_{1,4}}(X,Y,Z)$.

We also give the quadratic residue weight enumerator of $C'_{1,h}$, the classical Reed-Solomon code of order $h$, as it can be computed easily from the analogous weight enumerator of  $C_{1,h}$.  Recall that we get the classical Reed-Solomon code by puncturing the projective one at a single point, that is, we choose one of the $q+1$ coordinates of our code and consider the image of the map that takes a codeword to the element of $\F_q^q$ that comes from deleting this coordinate.

\begin{prop}\label{RSWt}
Suppose that $h \le q$ and that the quadratic residue weight enumerator of $C_{1,h}$ is given by
\[
\QR_{C_{1,h}}(X,Y,Z) = \sum_{ j,k \geq 0 \atop j+k \le q+1} A_{q+1-j-k, j, k} X^{q+1-j-k} Y^j Z^k.
\]
Then the $X^{q-j-k} Y^j Z^k$ coefficient of the quadratic residue weight enumerator of the classical Reed-Solomon code of order $h$ is 
\[
\frac{(q+1 - j-k) A_{q+1-j-k,j,k} + (j+1) A_{q-j-k, j+1, k} + (k+1) A_{q-j-k,j,k+1}}{q+1},
\]
where $A_{i,j,k} = 0$ if any of $i,j,k < 0$.
\end{prop}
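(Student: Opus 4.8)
The plan is a direct enumeration resting on two facts: that $C_{1,h}$ has enough coordinate symmetries, and that puncturing an MDS code of minimum distance at least $2$ loses no codewords. For the first fact, observe that since the left-hand side is well-defined we may take $h$ even. Given two coordinates of $C_{1,h}$, corresponding to points $p,p'\in\Proj^1(\F_q)$, choose $g\in\GL_2(\F_q)$ with $g\cdot p=p'$ (possible since $\GL_2(\F_q)$ acts transitively on $\Proj^1(\F_q)$). Then $f\mapsto f\circ g$ is a linear automorphism of the space of homogeneous degree-$h$ forms in two variables, and it induces a linear automorphism $\phi$ of $C_{1,h}$ that permutes the $q+1$ coordinates and rescales each of them by a nonzero $h$-th power --- hence by a nonzero square, as $h$ is even --- because $g$ carries the chosen affine representative of a point of $\Proj^1(\F_q)$ to a nonzero scalar multiple of the chosen affine representative of its image. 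In particular $\phi$ preserves $\res(\cdot)$ and $\nres(\cdot)$ and matches up the coordinate at $p$ with the coordinate at $p'$. It follows that, for each $(i,j,k)$ with $i+j+k=q+1$ and each coordinate $\ell$, the number of $c\in C_{1,h}$ with exactly $j$ nonzero-square coordinates, $k$ non-square coordinates (hence $i$ zero coordinates), and $c_\ell=0$ is independent of $\ell$; summing this over all $q+1$ coordinates and using that such a codeword has exactly $i$ zero entries shows it equals $\frac{i}{q+1}A_{i,j,k}$, and the same argument gives $\frac{j}{q+1}A_{i,j,k}$ such codewords with $c_\ell$ a nonzero square and $\frac{k}{q+1}A_{i,j,k}$ with $c_\ell$ a non-square.

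Next I would pass to the classical code. Puncturing at any fixed coordinate $\ell_0$ gives the same quadratic residue weight enumerator, because if $\phi$ matches up $\ell_0$ with another coordinate $\ell_1$ as above, it restricts to a monomial isomorphism, by square scalars, between the code punctured at $\ell_0$ and the code punctured at $\ell_1$. For $h\le q-1$ the code $C_{1,h}$ is MDS of minimum distance $q+1-h\ge 2$, so it has no codeword of weight $1$ and the puncturing map $C_{1,h}\to C'_{1,h}$ is a bijection. Hence a codeword $c'\in C'_{1,h}$ with $j$ nonzero-square coordinates, $k$ non-square coordinates, and therefore $q-j-k$ zero coordinates is the restriction of a unique $c\in C_{1,h}$, and the pair (number of nonzero-square coordinates, number of non-square coordinates) of $c$ is $(j,k)$, $(j+1,k)$, or $(j,k+1)$ according as $c_{\ell_0}$ is $0$, a nonzero square, or a non-square.

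Combining this with the three counts from the first paragraph, the number of such $c'$ --- equivalently, the coefficient of $X^{q-j-k}Y^jZ^k$ in $\QR_{C'_{1,h}}$ --- equals
\[
\frac{(q+1-j-k)A_{q+1-j-k,j,k}+(j+1)A_{q-j-k,j+1,k}+(k+1)A_{q-j-k,j,k+1}}{q+1},
\]
where the convention $A_{i,j,k}=0$ for negative indices absorbs the boundary cases. I expect the only real content here to be the uniformity assertion of the first paragraph: it is the one place where a structural feature of the Reed-Solomon code --- a coordinate-transitive symmetry group acting through square scalars --- is used, and everything else is bookkeeping with multinomial-type identities. The degenerate case $h=q$, in which $C_{1,q}=\F_q^{q+1}$ and the puncturing map is not injective, lies outside this argument and would have to be treated separately.
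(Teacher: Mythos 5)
Your argument is correct and is essentially the paper's own proof: both rest on the coordinate-transitive symmetry group of $C_{1,h}$, acting through square scalars, to show that among the $A_{i,j,k}$ codewords of a fixed composition the proportion vanishing at, taking a nonzero square value at, or taking a non-square value at a chosen coordinate is $i/(q+1)$, $j/(q+1)$, $k/(q+1)$ respectively, and then sum the three contributions. You are in fact more careful than the paper, which invokes only transitivity of $\PGL_2(\F_q)$ without spelling out why the rescaling factors are squares (your parity-of-$h$ observation) and without flagging the degenerate case $h=q$ where the puncturing map is not injective.
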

\begin{proof}
We consider all of the $A_{q+1-j-k, j, k}$ codewords of $C_{1,h}$ that have exactly $q+1-j-k$ coordinates equal to zero, $j$ equal to non-zero quadratic residues, and $k$ equal to non-zero quadratic non-residues.  The automorphism group of $\Proj^1(\F_q),\ \PGL_2(\F_q)$, is transitive on $\F_q$-points, so the proportion of these codewords that have a zero in a chosen coordinate is $\frac{q+1-j-k}{q+1}$.  Similar computations give the other two terms in the sum. 
\end{proof}

\subsection{The MacWilliams theorem for the quadratic residue weight enumerator}

Now that we have an expression for $\QR_{C_{1,4}}(X,Y,Z)$ we use a variation of the MacWilliams theorem to derive formulas for the coefficients of $\QR_{C_{1,4}^\perp}(X,Y,Z)=\QR_{C_{1,q-5}}(X,Y,Z)$. 
\begin{thm}\label{QRMac}
Let $C\subseteq \F_q^N$ be a linear code where $q $ is an odd prime power. Then 
\[
\QR_C(X,Y,Z) = \frac{1}{|C^\perp|} \QR_{C^\perp}(X',Y',Z'),
\]
where 
\begin{eqnarray*}
X' & = &  X+ \frac{q-1}{2}(Y+Z),\\
Y' & = & X + \frac{-(Y+Z) + \epsilon_q \sqrt{q} (Y-Z)}{2},\\
Z' & = &  X + \frac{-(Y+Z) + \epsilon_q \sqrt{q} (Z-Y)}{2},
\end{eqnarray*}
and
\[
\epsilon_q = 
\begin{cases} 1 &\mbox{if } q \equiv 1 \pmod{4} \\
i & \mbox{if } q \equiv 3 \pmod{4}. \end{cases}
\]
\end{thm}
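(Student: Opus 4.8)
The plan is to deduce this from the MacWilliams theorem for the \emph{complete} weight enumerator by specializing variables. For a linear code $D \subseteq \F_q^N$ write $\operatorname{cwe}_D\big(\{z_c\}_{c \in \F_q}\big) = \sum_{v \in D} \prod_{c \in \F_q} z_c^{\,n_c(v)}$, where $n_c(v)$ is the number of coordinates of $v$ equal to $c$. The complete weight enumerator form of the MacWilliams theorem (see \cite[Ch.~5]{MacwilliamsSloane}; it is proved by Poisson summation on $\F_q^N$ and holds for \emph{any} nontrivial additive character $\psi\colon \F_q \to \C^\times$) reads
\[
\operatorname{cwe}_C\big(\{z_c\}_{c\in\F_q}\big) \;=\; \frac{1}{|C^\perp|}\,\operatorname{cwe}_{C^\perp}\!\Big(\Big\{\textstyle\sum_{b\in\F_q}\psi(cb)\,z_b\Big\}_{c\in\F_q}\Big).
\]
The key point is that $\QR_D(X,Y,Z)$ is obtained from $\operatorname{cwe}_D$ by the substitution $z_0 = X$, $z_c = Y$ for $c$ a nonzero square, $z_c = Z$ for $c$ a nonsquare; more generally, whenever a family $(u_c)_{c\in\F_q}$ takes a value $\alpha$ at $c = 0$, a value $\beta$ at every nonzero square, and a value $\gamma$ at every nonsquare, one has $\operatorname{cwe}_D\big(\{u_c\}\big) = \QR_D(\alpha,\beta,\gamma)$, immediately from the definitions of the two enumerators.

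First I would apply this substitution to both sides of the displayed identity. The left-hand side becomes $\QR_C(X,Y,Z)$. For the right-hand side I must evaluate $w_c := \sum_{b \in \F_q}\psi(cb)z_b$ under the substitution. When $c = 0$ this gives $X + \tfrac{q-1}{2}(Y+Z) = X'$. When $c \neq 0$, set $A = \sum_{b \text{ a nonzero square}}\psi(b)$ and $B = \sum_{b \text{ a nonsquare}}\psi(b)$, so that $A + B = -1$ and $A - B = g_\psi := \sum_{x\in\F_q^\times}\eta(x)\psi(x)$ is the quadratic Gauss sum attached to $\psi$, with $\eta$ the quadratic character. Since multiplication by a nonzero square fixes each of the two nonzero-square classes while multiplication by a nonsquare interchanges them, $w_c$ specializes to $X + AY + BZ$ if $c$ is a nonzero square and to $X + BY + AZ$ if $c$ is a nonsquare. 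In particular $w_c$ depends only on the square class of $c$, so by the observation above the right-hand side equals
\[
\frac{1}{|C^\perp|}\,\QR_{C^\perp}\!\Big(X',\ X + \tfrac{-(Y+Z)+g_\psi(Y-Z)}{2},\ X + \tfrac{-(Y+Z)+g_\psi(Z-Y)}{2}\Big).
\]
Comparing with the statement, it remains to arrange $g_\psi = \epsilon_q\sqrt q$.

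This last step is the only one requiring care. For any nontrivial $\psi$ one has $g_\psi^2 = \legen{-1}{q} q = \epsilon_q^2 q$, so $g_\psi$ is one of the two square roots of $\epsilon_q^2 q$; moreover replacing $\psi$ by $x \mapsto \psi(ax)$ with $a$ a fixed nonsquare multiplies $g_\psi$ by $\legen{a}{q} = -1$. Since the complete weight enumerator MacWilliams identity is valid for every nontrivial additive character, I may fix $\psi$ at the outset so that $g_\psi = +\epsilon_q\sqrt q$; the second and third arguments of $\QR_{C^\perp}$ above are then exactly $Y'$ and $Z'$, and the theorem follows. (Alternatively one could pin the sign for the canonical additive character $x\mapsto e^{2\pi i \operatorname{tr}(x)/p}$ via Gauss's classical sign determination together with the Hasse--Davenport relation, but that refinement is unnecessary here.) I expect the only genuine work in writing this up to be a clean statement — and, if one wants self-containedness, a brief Poisson-summation proof — of the complete weight enumerator MacWilliams theorem over a non-prime field $\F_q$, together with the Gauss-sum sign bookkeeping just described; the substitution itself is purely mechanical.
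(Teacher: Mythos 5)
Your proposal is correct and follows essentially the same route as the paper: specialize the complete weight enumerator MacWilliams identity at $z_0=X$, $z_c=Y$ or $Z$ according to the square class of $c$, and evaluate the resulting character sums via the quadratic Gauss sum. The only (immaterial) difference is in resolving the sign of the Gauss sum: you renormalize the additive character to force $g_\psi=+\epsilon_q\sqrt q$, whereas the paper keeps its fixed character and discards the possible minus sign by appealing to the symmetry of $\QR_{C^\perp}$ in $Y$ and $Z$.
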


We first recall the definition of the complete weight enumerator and the MacWilliams theorem for it and then prove Theorem \ref{QRMac} by specializing certain variables.  We follow the construction as described in Chapter 5 of \cite{MacwilliamsSloane}.  Let $w_0, w_1,\ldots, w_{q-1}$ be an enumeration of the elements of $\F_q$ with $w_0 = 0$.  The composition of $c = (c_1,\ldots, c_n) \in \F_q^n$, is defined by $\comp(c) = (s_0, s_1,\ldots, s_{q-1})$ where $s_i = s_i(c)$ is the number of coordinates $c_j$ equal to $w_i$.  We consider the additive group algebra $\C[\F_q]$.  In $\C[\F_q]$ we denote the elements of $\F_q$ by $z_i$ where $z_i$ corresponds to $w_i$.  

For $c = (c_1,\ldots, c_N) \in \F_q^N$, let $F(c) =  z_0^{s_0} z_1^{s_1}\cdots z_{q-1}^{s_{q-1}}$ where $\comp(c) = (s_0,\ldots, s_{q-1})$. The complete weight enumerator of $C$ is
\[
\CW_C(z_0,z_1,\ldots, z_{q-1}) \eqdef \sum_{c \in C} F(c) = \sum_{s= (s_0,\ldots, s_{q-1})} A(s) z_0^{s_0} \cdots z_{q-1}^{s_{q-1}},
\]
where $A(s)$ is the number of $c\in C$ with $\comp(c) = (s_0,\ldots, s_{q-1})$.

We recall some basic facts about characters on $\F_q$.  Suppose $q= p^v$ for $p$ prime.  There is an element $\beta \in \F_q$ such that $\{1,\beta,\beta^2,\ldots, \beta^{v-1}\}$ is a basis for $\F_q$ as a vector space over $\F_p$.  We uniquely identify the element 
\[
\gamma = \gamma_0 + \gamma_1 \beta +\cdots + \gamma_{v-1} \beta^{v-1}
\]
by $(\gamma_0,\ldots, \gamma_{v-1})$. Let $\zeta = e^{2\pi i/p}$ and $\chi: \F_q \rightarrow \C$ be defined by 
\[
\chi(\gamma) = \zeta^{\gamma_0 + \gamma_1 + \cdots + \gamma_{v-1}}
\]
for $\gamma = (\gamma_0,\ldots, \gamma_{v-1}) \in \F_q$.  This is an additive character of $\F_q$.  The following version of the MacWilliams theorem for the complete weight enumerator is Theorem 10 in Chapter 5 of \cite{MacwilliamsSloane}.
\begin{thm}{\label{CompMac}}
Let $C\subset \F_q^N$ be a linear code and $\chi$ the additive character on $\F_q$ defined above. Then $\CW_{C^\perp}(z_0,\ldots, z_{q-1})$ is given by
\[
\frac{1}{|C|} \CW_C\left(\sum_{j=0}^{q-1} \chi(w_0 w_j) z_j, \sum_{j=0}^{q-1} \chi(w_1 w_j) z_j, \ldots, \sum_{j=0}^{q-1} \chi(w_{q-1} w_j) z_j\right).
\]
\end{thm}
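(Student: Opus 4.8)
The plan is to deduce this from the orthogonality relations for additive characters, i.e.\ to run the standard Poisson summation argument but with the formal variables $z_0,\ldots,z_{q-1}$ carried along. Throughout I would work in the polynomial ring $R=\C[z_0,\ldots,z_{q-1}]$ and regard $F(\cdot)$ as an $R$-valued function on $\F_q^N$; every manipulation below is $\C$-linear in $R$, so nothing is lost by letting $F$ take values in $R$ rather than in $\C$.

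First I would record the key input: the pairing $\F_q\times\F_q\to\C^\times$, $(a,b)\mapsto\chi(ab)$, is nondegenerate because $\chi$ is a nontrivial additive character. Consequently the $\F_q$-linear dual $C^\perp=\{u:\langle u,v\rangle=0\ \forall v\in C\}$ coincides with the character-theoretic annihilator $\{u:\chi(\langle u,v\rangle)=1\ \forall v\in C\}$: one inclusion is immediate from $\chi(0)=1$, and for the other I would note that $v\mapsto\langle u,v\rangle$ is $\F_q$-linear, so its image is an $\F_q$-subspace of $\F_q$, hence $\{0\}$ or all of $\F_q$; the latter is impossible when $\chi(\langle u,v\rangle)\equiv 1$ since $\chi$ is nontrivial, forcing $u\in C^\perp$. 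I would also recall orthogonality: for fixed $u$ the map $v\mapsto\chi(\langle u,v\rangle)$ is a character of the group $C$, trivial precisely when $u\in C^\perp$, so $\sum_{v\in C}\chi(\langle u,v\rangle)$ equals $|C|$ if $u\in C^\perp$ and $0$ otherwise.

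The heart of the argument is a Poisson summation lemma. For an $R$-valued function $f$ on $V=\F_q^N$ set $\widehat{f}(v)=\sum_{u\in V}\chi(\langle u,v\rangle)f(u)$. Swapping the order of summation and applying orthogonality gives
\[
\sum_{v\in C}\widehat f(v)=\sum_{u\in V}f(u)\sum_{v\in C}\chi(\langle u,v\rangle)=|C|\sum_{u\in C^\perp}f(u),
\]
so that $\sum_{u\in C^\perp}f(u)=\tfrac{1}{|C|}\sum_{v\in C}\widehat f(v)$. I would apply this to the multiplicative function $f=F$, which factors as $F(v)=\prod_{m=1}^N g(v_m)$ where $g:\F_q\to\{z_0,\ldots,z_{q-1}\}$ sends $w_i\mapsto z_i$; then $\sum_{u\in C^\perp}F(u)=\CW_{C^\perp}(z_0,\ldots,z_{q-1})$ by definition. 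Because $\langle u,v\rangle=\sum_m u_m v_m$ and $F$ is a product over coordinates, the transform factors:
\[
\widehat F(v)=\sum_{u\in V}\prod_{m=1}^N\chi(u_m v_m)\,g(u_m)=\prod_{m=1}^N\Big(\sum_{a\in\F_q}\chi(a v_m)\,g(a)\Big).
\]
If $v_m=w_i$ the $m$-th factor equals $z_i':=\sum_{j=0}^{q-1}\chi(w_i w_j)z_j$, so $\widehat F(v)$ is exactly the monomial $F(v)$ evaluated in the substituted variables $z_i'$. Summing over $v\in C$ yields $\sum_{v\in C}\widehat F(v)=\CW_C(z_0',\ldots,z_{q-1}')$, and the lemma gives the claimed identity.

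I expect the only genuine subtlety to be the identification of the two notions of duality in the second step, since everything else is bookkeeping: once the $\chi$-annihilator of $C$ is known to be the $\F_q$-bilinear dual $C^\perp$, orthogonality and the factorization of $\widehat F$ across coordinates are routine. A minor point worth stating carefully is that Poisson summation is here applied to an $R$-valued function, but since $R$ is a commutative $\C$-algebra and all sums are finite $\C$-linear combinations, the scalar-valued proof transfers verbatim.
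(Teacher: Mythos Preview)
Your argument is correct and is precisely the standard Poisson summation proof of the MacWilliams identity for the complete weight enumerator. The paper does not supply its own proof of this theorem; it quotes the statement from MacWilliams--Sloane (Chapter~5, Theorem~10) and uses it as a black box to derive the quadratic residue version (Theorem~\ref{QRMac}) by specializing variables. Your write-up thus fills in exactly the argument that the paper defers to the reference, and the identification of the $\F_q$-bilinear dual with the $\chi$-annihilator together with the coordinatewise factorization of $\widehat F$ are handled cleanly.
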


Recall that for the quadratic character $\eta$ on $\F_q^*$ we have
\[
\sum_{x\in \F_q^*} (1+\eta(x))\chi(x) = 2 \sum_{x\in \left(\F_q^{*}\right)^2} \chi(x).
\] 

The following is Theorem 5.15 in \cite{LidlNiederreiter}.
\begin{lemma}\label{GaussSum}
Suppose that $q =p^v$ is odd where $p$ is an odd prime and $v \ge 1$.  Let $\chi$ and $\eta$ be defined as above. Then 
\[
\sum_{x\in \F_q^*} \eta(x) \chi(x) = \epsilon_q \sqrt{q},\ \text{ where } \epsilon_q = 
\begin{cases} 
(-1)^{v-1} \ & \text{if } p \equiv 1\pmod{4} \\ 
(-1)^{v-1} i^v \ & \text{if } p \equiv 3 \pmod{4}. 
\end{cases}
\]
\end{lemma}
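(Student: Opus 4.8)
The plan is to reduce the computation over $\F_q$ to the already-known computation over the prime field $\F_p$ via the Hasse--Davenport relation, and then to evaluate the prime-field Gauss sum directly. First I would fix notation: write $g(\eta,\chi) = \sum_{x\in\F_q^*}\eta(x)\chi(x)$ for the Gauss sum attached to the quadratic character $\eta$ of $\F_q^*$ and the additive character $\chi$ defined in the excerpt. The key structural point is that $\chi = \chi_p \circ \tr_{\F_q/\F_p}$, where $\chi_p$ is the standard additive character $t\mapsto \zeta^t$ on $\F_p$: this is exactly what the chosen basis $\{1,\beta,\ldots,\beta^{v-1}\}$ and the formula $\chi(\gamma)=\zeta^{\gamma_0+\cdots+\gamma_{v-1}}$ encode, provided one checks that $\gamma_0 + \cdots + \gamma_{v-1}$ agrees with $\tr_{\F_q/\F_p}(\gamma)$ modulo $p$ — actually one should be slightly careful here, since in general the trace is not literally the sum of coordinates; the cleaner route is to note that $\chi$ is a nontrivial additive character of $\F_q$, hence of the form $\chi(x) = \chi_p(\tr_{\F_q/\F_p}(\alpha x))$ for some $\alpha\in\F_q^*$, and since $\eta$ is quadratic, $g(\eta,\chi) = \eta(\alpha)^{-1} g(\eta,\chi_p\circ\tr) = \pm g(\eta,\chi_p\circ\tr)$. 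One then has to track this sign, but for the displayed basis the standard computation shows $\alpha=1$ works, so I would simply verify $\chi = \chi_p\circ\tr$ on that basis.

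Next I would invoke the Hasse--Davenport relation: if $\eta'$ is the quadratic character of $\F_p^*$ (so $\eta = \eta'\circ N_{\F_q/\F_p}$, since the norm map carries the quadratic character of the big field to that of the small field — here one uses that $[\F_q^*:(\F_q^*)^2]=2$ and that the norm is surjective), then
\[
-g(\eta,\chi_p\circ\tr) = \bigl(-g(\eta',\chi_p)\bigr)^v.
\]
The prime-field Gauss sum $g(\eta',\chi_p)$ is the classical quadratic Gauss sum, and its value is $\sqrt{p}$ if $p\equiv 1\pmod 4$ and $i\sqrt{p}$ if $p\equiv 3\pmod 4$ — this is a standard fact (Gauss), which I would quote. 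Substituting, $-g(\eta,\chi) = (-\sqrt p)^v$ when $p\equiv 1\pmod 4$, giving $g(\eta,\chi) = -(-1)^v\sqrt p^{\,v}$; but $\sqrt q = \sqrt p^{\,v}$ (with the appropriate branch), so $\epsilon_q = -(-1)^v = (-1)^{v-1}$. Similarly, when $p\equiv 3\pmod 4$, $-g(\eta,\chi) = (-i\sqrt p)^v$, so $g(\eta,\chi) = -(-1)^v i^v \sqrt q = (-1)^{v-1} i^v \sqrt q$, giving $\epsilon_q = (-1)^{v-1}i^v$. This matches the claimed formula exactly.

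The main obstacle is bookkeeping of signs and the identification $\chi = \chi_p\circ\tr_{\F_q/\F_p}$: the Hasse--Davenport relation is most cleanly stated for the "lifted" character $\chi_p\circ\tr$, and one must be sure the $\chi$ defined via the ad hoc basis in the excerpt coincides with it (or differs by a harmless quadratic-character twist). A self-contained alternative, avoiding Hasse--Davenport entirely, would be to compute $|g(\eta,\chi)|^2 = q$ directly (standard orthogonality argument), so $g(\eta,\chi) = \omega\sqrt q$ with $|\omega|=1$, then pin down $\omega$ by a Stickelberger-type or Gauss-sum-sign argument; but this sign determination is exactly the hard classical result, so it is cleaner to cite Gauss's theorem for $\F_p$ and propagate via Hasse--Davenport as above. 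In either case, since the statement is quoted from \cite{LidlNiederreiter} as Theorem 5.15, the "proof" here can legitimately consist of citing that reference, with the sketch above indicating why the stated $\epsilon_q$ has the form it does.
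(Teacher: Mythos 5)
Your proposal is correct, and in fact it goes further than the paper does: the paper offers no proof of this lemma at all, simply introducing it as Theorem 5.15 of \cite{LidlNiederreiter}, which is exactly the fallback you identify in your last sentence. Your Hasse--Davenport route is the standard way to actually prove the statement, and your sign bookkeeping checks out: with $-g(\eta,\chi_p\circ\tr) = (-g(\eta',\chi_p))^v$ and Gauss's evaluation $g(\eta',\chi_p)=\sqrt p$ or $i\sqrt p$, one gets $\epsilon_q=(-1)^{v-1}$ and $(-1)^{v-1}i^v$ respectively, matching the lemma. The one point you rightly flag --- that the paper's $\chi(\gamma)=\zeta^{\gamma_0+\cdots+\gamma_{v-1}}$ is a sum of coordinates in an arbitrary basis $\{1,\beta,\ldots,\beta^{v-1}\}$ and hence equals $\chi_p(\tr(\alpha\gamma))$ for some $\alpha\in\F_q^*$ that need not be $1$ (already for $\F_9=\F_3(\beta)$ with $\beta^2=-1$ one has $\tr(1)=2$ but coordinate sum $1$) --- is a genuine wrinkle in the paper's setup, not in your argument: if $\alpha$ is a nonsquare the Gauss sum picks up a factor $\eta(\alpha)=-1$. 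This ambiguity is harmless for the application, since in the proof of Theorem \ref{QRMac} the authors observe that $\QR_C(X,Y,Z)$ is symmetric in $Y,Z$ and discard the sign of $\epsilon_q$ anyway; but your instinct to either pin down $\alpha$ or accept a quadratic twist is the careful thing to do. In short: your proof is valid and self-contained where the paper merely cites, and the only caveat you raise is one the paper itself silently absorbs.
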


We now prove Theorem \ref{QRMac}.
\begin{proof}
Suppose we are in the setting of Theorem \ref{CompMac}.  Let $z_0 = X,\ z_i = Y$ if $w_i$ is a non-zero quadratic residue, and $z_i = Z$ otherwise.  We first note that
\[
 \sum_{i=0}^{q-1} \chi(w_0 w_i) z_i = X + \frac{q-1}{2}\left(Y+Z\right),
\]
and that for $w_j \in  \left(\F_q^*\right)^2$ we have
\[
\sum_{i=0}^{q-1} \chi(w_j w_i) z_i = X + Y \sum_{x \in \left(\F_q^*\right)^2} \chi(x) + Z \sum_{x \in \F_q^* \setminus \left(\F_q^*\right)^2} \chi(x).
\]
Since $\sum_{x\in \F_q^*} \chi(x) = -1$, applying Lemma \ref{GaussSum} gives 
\[
\sum_{x \in \left(\F_q^*\right)^2} \chi(x)  =\frac{-1 + \epsilon_q \sqrt{q}}{2}
\]
and
\[
\sum_{x \in \F_q^* \setminus \left(\F_q^*\right)^2} \chi(x) = \frac{-1 - \epsilon_q \sqrt{q}}{2},
\] 
where $\epsilon_q$ is defined as above.  When $w_j \in \F_q^* \setminus \left(\F_q^*\right)^2$ the coefficients of $Y$ and $Z$ are switched.  As $\QR_{C}(X,Y,Z)$ is symmetric in $Y,Z$, we may drop the negative signs in the definition of $\eps_q$.  Combining these observations completes the proof.
\end{proof}

\section{Point count distributions for elliptic curves over $\F_q$}\label{PointCountDistributions}

Let $t_E= q+1-\#E(\F_q)$ be the trace of Frobenius associated to $E$, and recall the definition of $N_A(t)$ from Section \ref{genus1curves}.  
Let \est{S_R^*(q) \eqdef \sum_{E/\F_q} \frac{t_E^{2R}}{|\Aut_{\F_q} (E)|} =  \sum_{t^2\leq4q} t^{2R} N_A(t) } be the (weighted) $2R$\textsuperscript{th} moment of $\#E(\F_q)$ over $\F_q$-isomorphism classes of elliptic curves $E/\F_q$.  

Let $\tr_{\SL_2(\Z),k}T_q$ be the trace of the $T_q$ Hecke operator acting on the space of weight $k$ cusp forms for the full modular group and $C_R = (2R)!/(R!(R+1)!) $ be the $R$\textsuperscript{th} Catalan number.  We also define the following combinatorial coefficients that show up repeatedly in our formulas 
\est{ a_{R,j} \eqdef \frac{2R-2j+1}{2R+1}\binom{2R+1}{j} =\binom{2R}{j} - \binom{2R}{j-1} .} 
In particular, we have $a_{R,R}= C_R$ and $a_{R,0}=1.$

If $q=p^v$ is a prime power we define 
\est{\rho(q,k) \eqdef  -\tr_{\SL_2(\Z),k}T_q + \frac{k-1}{12} q^{k/2-1} \mathds{1}_{v\equiv 0 \pmod* 2} -\frac{1}{2} \sum_{0 \leq i \leq v } \min(p^i,p^{v-i})^{k-1} \\ + \sigma_1(q)\mathds{1}_{k=2},}  
where 
$\mathds{1}_A$ is the indicator function of $A$ being true and $\sigma_1$ is the sum-of-divisors function, that is, $\sigma_1(n) = \sum_{d\mid n } d$.  Furthermore we set \est{\rho(1,k) \eqdef \frac{i^{k-2}}{4}+\frac{1}{3} \frac{\omega^{k-1}-\overline{\omega}^{k-1}}{\omega-\overline{\omega}} \,\,\,\text{ and }\,\,\, \rho(p^{-1},k) \eqdef 0,} where $\omega$ is a primitive $3$\textsuperscript{rd} root of unity.

\begin{thm}\label{birch} We have for prime $p\geq 3$ that 
\begin{eqnarray*}
S_0^*(p) &= & p \\ S_1^*(p) & = & p^2-1 \\ S_2^*(p) & = & 2p^3-3p-1 \\ S_3^*(p) & = & 5p^4-9p^2-5p-1 \\ S_4^*(p) & = & 14p^5-28p^3-20p^2 -7p-1 \\ S_5^*(p) & = & 42p^6 -90p^4-75p^3 -35p^2-9p-1 -\tau(p),
\end{eqnarray*} 
where $\tau(p)$ is Ramanujan's $\tau$-function.  
In general if $q=p^v$ with $p \neq 2$ we have \est{S^*_{R}(q) =\sum_{j=0}^R a_{R,j}  q^j \left( \rho(q,2R-2j+2)-p^{2R-2j+1}\rho(q/p^2,2R-2j+2)\right) \\ + \frac{p-1}{12}(4q)^R\mathds{1}_{v\equiv 0 \pmod* 2}.}  
\end{thm}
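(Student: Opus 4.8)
The plan is to reduce the weighted moment $S_R^*(q)$ to a combination of the "class-number sums" that appear in the elliptic term of the Eichler-Selberg trace formula, and then read those off in terms of $\tr_{\SL_2(\Z),k}T_q$.

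\textbf{Step 1 (algebraic reduction).} For a prime power $q$ let $P_k(t,q)=\frac{\rho^{k-1}-\bar\rho^{k-1}}{\rho-\bar\rho}$ be the polynomial attached to the roots $\rho,\bar\rho$ of $X^2-tX+q$, so that $P_2=1$, $P_3=t$, and $P_{k+1}=tP_k-qP_{k-1}$; these are exactly the polynomials in the elliptic term of Eichler-Selberg. I would first prove the polynomial identity
\[
t^{2R}=\sum_{j=0}^{R}a_{R,j}\,q^{j}\,P_{2R-2j+2}(t,q),
\]
by induction on $R$, using $t^2P_m(t,q)=P_{m+2}(t,q)+2qP_m(t,q)+q^2P_{m-2}(t,q)$ together with the combinatorial identity $a_{R,j}=a_{R-1,j}+2a_{R-1,j-1}+a_{R-1,j-2}$ (equivalently, set $t=2\sqrt q\cos\theta$ and invoke $(2\cos\theta)^{2R}=\sum_j a_{R,j}\frac{\sin((2R-2j+1)\theta)}{\sin\theta}$). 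Summing this against $N_A(t)$ over $t^2\le 4q$ gives $S_R^*(q)=\sum_{j=0}^{R}a_{R,j}q^{j}\,\Sigma_{2R-2j+2}(q)$, where $\Sigma_k(q):=\sum_{t^2\le4q}P_k(t,q)N_A(t)$; so it suffices to evaluate $\Sigma_k(q)$ for each even $k\ge2$.

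\textbf{Step 2 (identifying $\rho(q,k)$).} Next I would recognize $\rho(q,k)$ as a class-number sum. The Eichler-Selberg trace formula for $S_k(\SL_2(\Z))$, including the Eisenstein correction when $k=2$, gives $\frac12\sum_{t^2\le4q}P_k(t,q)H_w(t^2-4q)=-\tr_{\SL_2(\Z),k}T_q-\frac12\sum_{dd'=q}\min(d,d')^{k-1}+\sigma_1(q)\mathds{1}_{k=2}$, and peeling off the boundary term $H(0)=-\frac1{12}$ occurring at $t^2=4q$ (which happens iff $v$ is even, and contributes $2(k-1)q^{k/2-1}\cdot(-\frac1{12})$) converts the left-hand side into $\frac12\sum_{t^2<4q}P_k(t,q)H_w(t^2-4q)$. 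Comparing with the definition of $\rho$ yields $\rho(q,k)=\frac12\sum_{t^2<4q}P_k(t,q)H_w(t^2-4q)$ for every prime power $q\ge1$, with $\rho(1,k)$ the corresponding degenerate sum over $t\in\{0,\pm1\}$ and $\rho(p^{-1},k)=0$ the empty sum.

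\textbf{Step 3 (evaluating $\Sigma_k$, and the prime case).} By Waterhouse's classification the only traces $t$ with $p\mid t$ and $N_A(t)\ne0$ are the supersingular ones $t=0$, $t^2=q$, $t^2=3q$ (when $p=3$), $t^2=4q$, while for $p\nmid t$ with $t^2<4q$ Theorem \ref{S46withweights} gives $N_A(t)=\tfrac12H_w(t^2-4q)$. Hence $\Sigma_k(q)=\rho(q,k)-\tfrac12\sum_{p\mid t,\,t^2<4q}P_k(t,q)H_w(t^2-4q)+\sum_{p\mid t}P_k(t,q)N_A(t)$. The $t^2=4q$ part of the last sum is $2P_k(2\sqrt q,q)N_A(2\sqrt q)=\tfrac{p-1}{12}(k-1)q^{k/2-1}$ (using $N_A(2\sqrt q)=\tfrac{p-1}{24}$ and $P_k(2\sqrt q,q)=(k-1)q^{k/2-1}$), and after the weighting $\sum_j a_{R,j}q^j(\cdot)$ this produces exactly $\tfrac{p-1}{12}(4q)^R\mathds{1}_{2\mid v}$ by specializing the Step 1 identity at $t=2\sqrt q$. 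For the remaining $p\mid t$, $t^2<4q$ terms I substitute $t=pt'$, use the homogeneity $P_k(pt',q)=p^{k-2}P_k(t',q/p^2)$ and the Hurwitz class-number recursion $H_w(p^2\Delta)=(p+1-\legen{\Delta}{p})H_w(\Delta)-pH_w(\Delta/p^2)$, and plug in the explicit values $N_A(0)$, $N_A(\sqrt q)$, $N_A(\sqrt{3q})$ from Theorem \ref{S46withweights} and Lemma \ref{Schoof1}; an induction on $v$ (base cases $v\le2$, checked directly with $\rho(1,k)$ and $\rho(p^{-1},k)=0$) shows that all these terms telescope to precisely $-p^{k-1}\rho(q/p^2,k)$. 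Therefore $\Sigma_k(q)=\rho(q,k)-p^{k-1}\rho(q/p^2,k)+\tfrac{p-1}{12}(k-1)q^{k/2-1}\mathds{1}_{2\mid v}$, and feeding this into $S_R^*(q)=\sum_j a_{R,j}q^j\Sigma_{2R-2j+2}(q)$ gives the general formula. For $q=p$ prime one has $v=1$, so $\mathds{1}_{2\mid v}=0$ and $\rho(p^{-1},k)=0$, giving $S_R^*(p)=\sum_{j=0}^R a_{R,j}p^j\rho(p,2R-2j+2)$ with $\rho(p,k)=-\tr_{\SL_2(\Z),k}T_p-1+(p+1)\mathds{1}_{k=2}$; since $\dim S_k(\SL_2(\Z))=0$ for $2\le k\le10$ and $\tr_{\SL_2(\Z),12}T_p=\tau(p)$, expanding for $R=0,\dots,5$ yields the six displayed identities, the only non-polynomial term being $-\tau(p)$ (with coefficient $a_{5,0}=1$) in $S_5^*(p)$.

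\textbf{Main obstacle.} The delicate step is the telescoping in the last part of Step 3: one must arrange the $p\mid t$ class-number contributions so that the Hurwitz recursion, combined with the supersingular multiplicities from Theorem \ref{S46withweights} (including the idiosyncratic behavior at the $j$-invariants $0$ and $1728$), collapses exactly to the single correction $-p^{k-1}\rho(q/p^2,k)$ rather than to an unwieldy cascade through the levels $q/p^2, q/p^4,\dots$. Keeping careful track of the Kronecker-symbol twists that appear in the recursion, and verifying the base cases $v\le2$ by hand, is where essentially all of the work lies; the remaining steps are bookkeeping.
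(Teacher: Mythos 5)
Your outline is correct, but it is worth locating it relative to the paper: for Theorem \ref{birch} itself the paper offers no proof at all, only citations to Birch, Ihara, and Brock--Granville, whereas you supply a self-contained argument. That argument is, in essence, the level-one specialization of the proof the paper gives in Section \ref{proof} for the $2$-torsion analogue, Theorem \ref{Birchwith2tors}: your Step 1 identity is Lemma \ref{swapwts}; your Step 2 identification $\rho(q,k)=\tfrac12\sum_{t^2<4q}P_k(t,q)H_w(t^2-4q)$, with $P_k(t,q)=\frac{\alpha^{k-1}-\overline{\alpha}^{k-1}}{\alpha-\overline{\alpha}}$, is the $\SL_2(\Z)$ counterpart of Lemma \ref{MainProp} (and is easier, since no comparison of levels $2$ and $4$ is needed --- one only moves the $t^2=4q$ term with $H_w(0)=-\tfrac1{12}$ across, which is precisely where the $\frac{k-1}{12}q^{k/2-1}\mathds{1}_{v\equiv 0\,(2)}$ in the definition of $\rho$ comes from); and your Step 3 telescoping of the $p\mid t$ contributions to $-p^{k-1}\rho(q/p^2,k)$ is the counterpart of Lemmas \ref{ord} and \ref{ML}. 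The checkable points check: the prime case reduces to $S_R^*(p)=\sum_j a_{R,j}p^j\rho(p,2R-2j+2)$ with $\rho(p,k)=-\tr_{\SL_2(\Z),k}T_p-1+(p+1)\mathds{1}_{k=2}$ and reproduces all six displayed formulas with the single $-\tau(p)$ surviving at $R=5$; the $t^2=4q$ term gives $\frac{p-1}{12}(4q)^R$ via your specialization $t=2\sqrt q$ of the Step 1 identity; and the base case $v=2$ of your telescoping works out to $-p^{k-1}\rho(1,k)$ with exactly the paper's value of $\rho(1,k)$, the $i^{k-2}/4$ coming from $t=0$ and the $\frac13\frac{\omega^{k-1}-\overline{\omega}^{k-1}}{\omega-\overline{\omega}}$ from $t=\pm p$. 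The one step you assert rather than carry out --- the induction on $v$ for the $p\mid t$, $t^2<4q$ terms --- is indeed where the work lies, but the mechanism you name (Lemma \ref{cor728} for $h_w(f^2d)$ together with the homogeneity $P_k(pt',q)=p^{k-2}P_k(t',q/p^2)$) is exactly what the paper deploys in Lemma \ref{ord}, so it closes. Net effect: you prove from scratch, by the method the paper reserves for its new results, a statement the paper imports from the literature.
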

\begin{proof} Theorem \ref{birch} is due to Birch \cite{Birch} in the prime field case.  The generalization to all finite fields is well-known, being implicit in the work of Ihara \cite{Ihara}. See also Section 3 of the paper of Brock and Granville \cite{BrockGranville}. 
\end{proof}

For our application to computing the coefficients of the quadratic residue weight enumerator of the codes $C_{1,q-5}$ we prove the following variation of Birch's theorem to elliptic curves with rational 2-torsion. 

Let 
\est{ S_{2,R}^*(q) \eqdef \sum_{\substack{E/\F_q \\ E(\F_q)[2]\neq \{O\}}} \frac{t_E^{2R}}{|\Aut_{\F_q} (E)|} =  \sum_{\substack{t^2\leq4q \\ t \equiv 0 \pmod* 2}} t^{2R} N_A(t) }
be the (weighted) $2R$\textsuperscript{th} moment of the number of rational points of isomorphism classes of elliptic curves over $\F_q$ with at least one non-zero rational 2-torsion point.  

Let $\tr_{\Gamma_0(4),k}T_q$ be the trace of the Hecke operator $T_q$ acting on the space of classical cusp forms $S_k(\Gamma_0(4))$ of weight $k$ for the congruence subgroup $\Gamma_0(4)$, similarly for the congruence subgroup $\Gamma_0(2)$.

If $q=p^v$ is a prime power we define 
\est{\tau(q,k) \eqdef   \frac{k-1}{12} q^{\frac{k}{2}-1}\mathds{1}_{v \equiv 0 \pmod* 2} + \frac{1}{3} \tr_{\Gamma_0(4),k} T_q  -  \tr_{\Gamma_0(2),k} T_q \\  - \frac{1}{2}\sum_{0 \leq i \leq v}\min(p^i,p^{v-i})^{k-1}+ \frac{2}{3}\sigma_1(q)\mathds{1}_{k=2}.}  
Furthermore we set \est{\tau(1,k) \eqdef \frac{i^{k-2}}{4}\,\,\,\text{ and }\,\,\, \tau(p^{-1},k) \eqdef0.}  

\begin{thm}\label{Birchwith2tors}
If $q=p$ is an odd prime we have 
\begin{eqnarray*}
S^*_{2,0}(p) & = & \frac{1}{3} (2p-1) \\ S^*_{2,1}(p) & = & \frac{1}{3}p(2p-1)-1 \\ S^*_{2,2}(p) & = & \frac{4}{3} p^3 - \frac{2}{3} p^2 -3 p -1+ \frac{1}{3}a(p),
\end{eqnarray*}
where $a(p)$ is the $p$\textsuperscript{th} Fourier coefficient of $\eta^{12}(2z)$, the unique normalized Hecke eigenform of weight $6$ for $\Gamma_0(4)$.     
In general if $q=p^v$ with $p$ an odd prime we have 
\est{S^*_{2,R}(q) =\sum_{j=0}^R a_{R,j}  q^j \left( \tau(q,2R-2j+2)-p^{2R-2j+1}\tau(q/p^2,2R-2j+2)\right) \\ + \frac{p-1}{12}(4q)^R\mathds{1}_{v\equiv 0 \pmod* 2}.}
\end{thm}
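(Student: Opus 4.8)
The plan is to run Birch's argument for Theorem \ref{birch} with $\SL_2(\Z)$ replaced by the congruence subgroups $\Gamma_0(2)$ and $\Gamma_0(4)$, exploiting the fact that, since $q$ is odd, $E(\F_q)[2]\neq\{O\}$ if and only if $t_E$ is even. Concretely, I would first use Theorem \ref{S46withweights} to write
\[
S^*_{2,R}(q)=\sum_{\substack{t^2\leq 4q\\ t\equiv 0\ (2)}} t^{2R}N_A(t)=\frac{1}{2}\sum_{\substack{t^2<4q,\ p\nmid t\\ t\equiv 0\ (2)}} t^{2R}H_w(t^2-4q)+(\text{boundary terms}),
\]
where the boundary terms are the contributions of $t=0$ (which vanishes for $R\geq 1$) and, when $v$ is even, of $t^2=4q$; by Theorem \ref{S46withweights} together with Proposition \ref{j01728} and Lemma \ref{Schoof1} the latter equals exactly $\frac{p-1}{12}(4q)^R\mathds{1}_{v\equiv 0\,(2)}$, which is the isolated term in the statement, and the $t=0$ term (nonzero only when $R=0$) will be absorbed at the end.

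Next I would introduce the normalized Chebyshev-type polynomials $P_k(t,q)$ by $\sum_{k\geq 2}P_k(t,q)x^{k-2}=(1-tx+qx^2)^{-1}$, so that $P_k(t,q)=(\alpha^{k-1}-\overline\alpha^{k-1})/(\alpha-\overline\alpha)$ for $\alpha,\overline\alpha$ the roots of $X^2-tX+q$, and record the elementary generating-function identity $t^{2R}=\sum_{j=0}^{R}a_{R,j}q^{j}P_{2R-2j+2}(t,q)$; this is exactly the combinatorial inversion producing the coefficients $a_{R,j}$ and is the same one used in the proof of Theorem \ref{birch}. Substituting it into the displayed formula reduces the theorem to showing, for each even weight $k$, that $\tfrac{1}{2}\sum_{t\equiv 0\,(2)}P_k(t,q)H_w(t^2-4q)$ equals $\tau(q,k)-p^{\,k-1}\tau(q/p^2,k)$ up to the boundary terms already removed. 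For this I would write out the Eichler--Selberg (Hijikata) trace formula for $S_k(\Gamma_0(2))$ and $S_k(\Gamma_0(4))$: the elliptic term of $\tr_{\Gamma_0(N),k}T_q$ is a weighted sum $-\tfrac12\sum_{t^2<4q}P_k(t,q)\sum_{f}h_w(\tfrac{t^2-4q}{f^2})\,\mu_N(t,f,q)$ with $\mu_N$ the local embedding number (nontrivial only at $2$), the hyperbolic term is a multiple of $\sum_{0\leq i\leq v}\min(p^i,p^{v-i})^{k-1}$ with the multiple governed by the number of cusps of $X_0(N)$, the identity term is $\tfrac{k-1}{12}\psi(N)q^{k/2-1}\mathds{1}_{v\equiv 0\,(2)}$ with $\psi(2)=3$, $\psi(4)=6$, and there is a $\sigma_1(q)$-Eisenstein term only when $k=2$. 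The crucial point is that, for odd $q$, one has $t^2-4q\equiv 5\pmod 8$ (so $2$ is inert in the relevant quadratic order and $\mu_2=\mu_4=0$, consistent with the absence of rational $2$-torsion) when $t$ is odd, while for even $t$ one has $4\mid t^2-4q$; a short local computation then shows that the combination $\tfrac13\mu_4(t,f,q)-\mu_2(t,f,q)$ vanishes for odd $t$ and collapses to the plain value (i.e. no local factor) for even $t$, so that $\tfrac13\tr_{\Gamma_0(4),k}T_q-\tr_{\Gamma_0(2),k}T_q$ --- the combination appearing in $\tau(q,k)$ --- recovers precisely $-\tfrac12\sum_{t\equiv 0\,(2)}P_k(t,q)H_w(t^2-4q)$ plus the non-elliptic corrections. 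With $\psi(2)=3,\psi(4)=6$ and $\#\{\text{cusps of }X_0(2)\}=2,\#\{\text{cusps of }X_0(4)\}=3$, the residual identity contribution $\tfrac13\psi(4)-\psi(2)=-1$ cancels against the explicit $\tfrac{k-1}{12}q^{k/2-1}\mathds{1}_{v\equiv 0\,(2)}$ in $\tau(q,k)$, the hyperbolic contributions combine to $-\tfrac12\sum_{i}\min(p^i,p^{v-i})^{k-1}$, the $k=2$ parabolic terms produce the $\tfrac23\sigma_1(q)$, and the conductor ($f>1$) part of $H_w(t^2-4q)=\sum_f h_w(\cdot)$ is absorbed by the subtraction $-p^{k-1}\tau(q/p^2,k)$ exactly as in Theorem \ref{birch} (the Ihara-type device of collapsing a Hurwitz class number into a difference of two traces, which also forces the special definitions of $\tau(1,k)$ and $\tau(p^{-1},k)$).

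Finally I would check the degenerate cases invisible to the class-number sum: when $q$ is a square the supersingular classes with $j$-invariant $0$ and $1728$ and the classes with $t^2=q,2q$ (all having $p\mid t$) contribute to $N_A$, and Proposition \ref{j01728} and Lemma \ref{Schoof1} supply their automorphism orders and $2$-torsion, which I would match against the special value $\tau(1,k)$, the parity of $t^2=q$, and (for $p=3$, $v$ odd) the $t^2=3q$ entry of Theorem \ref{S46withweights}; and for $R=0$ I would verify directly that the $t=0$ term together with $\tau(q,2)-p\,\tau(q/p^2,2)$ produces $\tfrac13(2p-1)$ in the prime case, a convenient check on all normalizations. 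The explicit small cases $S^*_{2,0},S^*_{2,1},S^*_{2,2}$ then follow using $\dim S_k(\Gamma_0(2))=\dim S_k(\Gamma_0(4))=0$ for $k\leq 5$ and $\dim S_6(\Gamma_0(4))=1$, spanned by $\eta^{12}(2z)$ with $\tr_{\Gamma_0(4),6}T_p=a(p)$.

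The main obstacle is the second paragraph: assembling the Eichler--Selberg trace formula for $\Gamma_0(2)$ and $\Gamma_0(4)$ in a single consistent normalization and verifying the local-factor identity $\tfrac13\mu_4-\mu_2$ on the nose, so that every residual constant ($\tfrac{k-1}{12}$, $-\tfrac12$, $\tfrac23$) and the $q/p^2$-subtraction agree with the definition of $\tau(q,k)$. This is bookkeeping-heavy but conceptually routine, being a level-$N$ variant of a computation already carried out for $\SL_2(\Z)$.
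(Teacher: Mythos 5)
Your proposal follows essentially the same route as the paper: the paper likewise writes $S^*_{2,R}(q)$ via the even-trace Hurwitz class number sum, uses the inversion $t^{2R}=\sum_j a_{R,j}q^jP_{2R-2j+2}(t,q)$ (Lemma \ref{swapwts}), establishes the Eichler--Selberg formulas for $\Gamma_0(2)$ and $\Gamma_0(4)$ with the explicit local weights at $2$ (Propositions \ref{gamma2} and \ref{gamma4}), combines them exactly as in your $\tfrac13\tr_{\Gamma_0(4),k}T_q-\tr_{\Gamma_0(2),k}T_q$ to recover $\tfrac12\sum_{t\equiv 0\,(2)}P_k(t,q)H_w(t^2-4q)=\tau(q,k)$ (Lemma \ref{MainProp}), and handles the conductor and non-ordinary contributions by the same Ihara-type telescoping $\tau(q,k)-p^{k-1}\tau(q/p^2,k)$ together with the separate supersingular boundary term. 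The only work you defer --- the case-by-case evaluation of the local embedding numbers modulo $4$, $8$, and $16$ --- is precisely the ``lengthy but trivial computation'' carried out in the proof of Proposition \ref{gamma4}, so your outline is correct and matches the paper's proof.
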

\noindent We prove this theorem in Section \ref{proof}.

Finally, let
\est{S_{2\times 2,R}^*(q) \eqdef \sum_{\substack{E/\F_q \\ E(\F_q)[2] \cong \Z/2\Z \times \Z/2\Z}} \frac{t_{E}^{2R}}{|\Aut_{\F_q} (E)|} = \sum_{\substack{t \equiv q+1 \pmod* 4  \\ t^2\leq 4q}} t^{2R}N_{A,2\times 2}(t)} 
be the weighted $2R$\textsuperscript{th} moment of elliptic curves over $\F_q$ with full rational $2$-torsion.  
Let 
\est{ \phi(q,k) \eqdef -\frac{1}{6} \tr_{\Gamma_0(4),k} T_q+ \frac{k-1}{12} q^{k/2-1} \mathds{1}_{v\equiv 0\pmod* 2} -\frac{1}{4} \sum_{0\leq i \leq v} \min(p^i,p^{v-i})^{k-1} \\ +\frac{1}{6} \sigma_1(q)\mathds{1}_{k=2}. } 
Furthermore we set \est{\phi(1,k) = \phi(p^{-1},k) \eqdef 0. }  

\begin{thm}\label{Birchwithfull2tors}
If $q=p$ is an odd prime we have 
\begin{eqnarray*} 
S^*_{2\times 2,0}(p) & = & \frac{1}{6}p -\frac{1}{3} \\ S^*_{2\times 2,1}(p) & = & \frac{1}{6}p^2 -\frac{1}{3}p -\frac{1}{2}  \\ S^*_{2 \times 2,2}(p) & = & \frac{1}{3} p^3-\frac{2}{3} p^2 -\frac{3}{2}p -\frac{1}{2} -\frac{1}{6} a(p),
\end{eqnarray*}
where $a(p)$ is the $p$\textsuperscript{th} Fourier coefficient of $\eta^{12}(2z)$, the unique normalized Hecke eigenform of weight $6$ for $\Gamma_0(4)$.    
In general if $q=p^v$ with $p$ an odd prime we have 
\est{S^*_{2 \times 2,R}(q) =\sum_{j=0}^R a_{R,j}  q^j \left( \phi(q,2R-2j+2)-p^{2R-2j+1}\phi(q/p^2,2R-2j+2)\right) \\ + \frac{p-1}{12}(4q)^R\mathds{1}_{v\equiv 0 \pmod* 2}.} 
\end{thm}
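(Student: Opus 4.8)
The plan is to follow the strategy used to prove Theorem~\ref{Birchwith2tors}, now feeding in Lemma~\ref{Schoof1} in place of the count of curves with a single rational $2$-torsion point, so that only $\Gamma_0(4)$ (and not $\Gamma_0(2)$) intervenes. First I would write $S^*_{2\times 2,R}(q)=\sum_{t^2\le 4q}t^{2R}N_{A,2\times 2}(t)$ and insert the values of $N_{A,2\times 2}(t)$ from Lemma~\ref{Schoof1}. The terms with $t^2=q,2q,3q$ vanish. The terms with $t^2=4q$ occur only when $q$ is a square, and there Lemma~\ref{Schoof1}(3) together with Theorem~\ref{S46withweights} give $N_{A,2\times 2}(\pm 2\sqrt q)=\tfrac{p-1}{24}$, producing exactly the isolated term $\tfrac{p-1}{12}(4q)^R\mathds{1}_{v \equiv 0 \pmod* 2}$ of the statement. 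For $t^2<4q$ one has $N_{A,2\times 2}(t)=\tfrac12 H_w((t^2-4q)/4)$ when $p\nmid t$ and $t\equiv q+1\pmod 4$, $N_{A,2\times 2}(0)=\tfrac12 h_w(-p)$ when $q\equiv 3\pmod 4$, and $N_{A,2\times 2}(t)=0$ otherwise. (Since $q$ is odd, $t\equiv q+1\pmod 4$ forces $t$ even, and then $(t^2-4q)/4$ is $\equiv 0$ or $1\pmod 4$, so it really is the argument of a Hurwitz-Kronecker class number and $\Z[(\pi-1)/2]$ of Lemma~\ref{Schoof1} really is an order.)

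Next I would expand the power sum $t^{2R}$ in Gegenbauer polynomials via the identity $t^{2R}=\sum_{j=0}^{R}a_{R,j}\,q^{j}\,P_{2R-2j+2}(t,q)$, where $P_k(t,q)$ is the coefficient of $x^{k-2}$ in $(1-tx+qx^2)^{-1}$; this is the same manipulation that produces the coefficients $a_{R,j}$ and the powers $q^{j}$ in Theorems~\ref{birch} and~\ref{Birchwith2tors}, so reusing it is routine. The problem then reduces to proving, for every even $k$,
\[
\sum_{t^2<4q}P_k(t,q)\,N_{A,2\times 2}(t)=\phi(q,k)-p^{k-1}\phi(q/p^2,k),
\]
with $N_{A,2\times 2}(t)$ as above; multiplying by $a_{R,j}q^{j}$, summing over $k=2R-2j+2$, and adding back the $t^2=4q$ contribution then yields the theorem. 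Specializing $v=1$ (so $q/p^2=p^{-1}$, $\phi(p^{-1},k)=0$, while $\tr_{\Gamma_0(4),k}T_p=0$ for $k\le 4$ and $\tr_{\Gamma_0(4),6}T_p=a(p)$) recovers the three prime-field identities.

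The heart of the proof, and the step I expect to be the main obstacle, is the displayed identity, which amounts to reading the Eichler-Selberg trace formula for $T_q$ on $S_k(\Gamma_0(4))$ backwards; the delicate part is the $2$-adic bookkeeping of its elliptic term at level $4$. One must check that the local factor at $2$ there vanishes unless $t\equiv q+1\pmod 4$ and otherwise replaces the contribution of $\Z[\pi]$ by the Hurwitz-Kronecker class number $H_w((t^2-4q)/4)$ attached to the overorder $\Z[(\pi-1)/2]$, in accordance with Lemma~\ref{Schoof1}; that the index $[\Gamma(1):\Gamma_0(4)]=6$, combined with the normalization in which (via $\Gamma_0(4)\cong\Gamma(2)$) curves with full rational $2$-torsion are counted with multiplicity $|\GL_2(\F_2)|=6$, produces the constant $-\tfrac16$ before $\tr_{\Gamma_0(4),k}T_q$ in $\phi(q,k)$ and leaves the identity term equal to $\tfrac{k-1}{12}q^{k/2-1}\mathds{1}_{v \equiv 0 \pmod* 2}$ just as in $\rho(q,k)$; that the hyperbolic term gives $-\tfrac14\sum_{0\le i\le v}\min(p^i,p^{v-i})^{k-1}$ and the parabolic term (only when $k=2$) gives $\tfrac16\sigma_1(q)\mathds{1}_{k=2}$; and that the $p$-oldform contributions from levels $2$ and $1$ produce the correction $-p^{k-1}\phi(q/p^2,k)$, with $\phi(1,k)=\phi(p^{-1},k)=0$ covering $v\le 2$. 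The loci requiring the most care are $t=0$ (which is what makes $h_w(-p)$ appear in $N_{A,2\times 2}(0)$ when $q\equiv 3\pmod 4$), the supersingular traces with $p\mid t$, and the case where the conductor of $\Z[\pi]$ is even, together with reconciling the automorphism weights on the moduli side with the class-number weights $h_w$; once these are handled exactly as in the proof of Theorem~\ref{Birchwith2tors}, the pieces assemble to the stated formula.
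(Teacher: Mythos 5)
Your proposal is correct and follows exactly the route the paper intends: the paper itself omits this proof, citing Ahlgren for the prime case and stating that the general case "follows the same lines" as Theorem~\ref{Birchwith2tors}, and your outline is precisely that omitted argument --- Lemma~\ref{Schoof1} for $N_{A,2\times 2}(t)$, the $a_{R,j}$ expansion of $t^{2R}$, Proposition~\ref{gamma4} read backwards to identify $\tfrac12\sum_{t\equiv q+1\pmod* 4}P_k(t,q)H_w((t^2-4q)/4)$ with $\phi(q,k)$, and the analogues of Lemmas~\ref{ord} and~\ref{ML} for the $p\mid t$ and oldform corrections. Your prime-field specializations also check out numerically against the stated formulas.
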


\begin{proof} Theorem \ref{Birchwithfull2tors} is essentially due to Ahlgren \cite{AhlgrenFivefold} in the prime field case.  The generalization to all finite fields follows the same lines as Theorem \ref{Birchwith2tors} so we omit it.\end{proof} 

Remarks:
\begin{enumerate}
\item Our results re-prove the ``vertical'' Sato-Tate law for elliptic curves with specified rational $2$-torsion, which is of course already known in much greater generality, see e.g.~\cite{KatzSarnak}. On the other hand we believe the full formula for the moments in terms of traces of Hecke operators to be new and interesting in its own right.
\item The case $R=0$ of Theorem \ref{Birchwith2tors} is a special case of work of Eichler from the 1950s \cite{EichlerRelation}, and is elementary.  A nice exposition is given by Moreno \cite{Moreno}, see Theorem 5.10.  
\item Theorem \ref{birch} can be understood in terms of counting rational points on fibered products of the universal elliptic curve.  Work of Deligne and others relate this problem to cohomology groups of Kuga-Sato varieties.  For an introduction to these ideas see the book \cite{EdCo}, particularly Proposition 1.5.12 and Section 2.4.  It is likely that our results can be interpreted in this setting by taking fibered products of modular curves of level $2$, however we have opted instead for arguments similar in spirit to Birch's original proof.
\item Note that the coefficient of $\tr_{\Gamma_0(4),k} T_q$ in $\tau(q,k)$ and $\phi(q,k)$ differs by exactly a factor of $-1/2$.  We will use this fact crucially in the following proof.    
\end{enumerate}

\begin{proof}[Proof of Theorem~{\rm\ref{MainTheorem}}]
Recall the definitions of $M_v$ and $M_{v,R}(N)$ for $N=1,2,4$ from just above the statement of Theorem \ref{MainTheorem} in Section \ref{intro}.  For notational convenience, let 
\est{a= & \frac{ -1 + \epsilon_q \sqrt{q}}{2} \\ \overline{a} = &  \frac{ -1 - \epsilon_q \sqrt{q}}{2} } 
and note that $a+\overline{a}=-1$.   
The MacWilliams substitution cf. Theorem \ref{QRMac} is \est{ X & \mapsto X+\frac{q-1}{2} \left(Y+Z\right) \\ Y & \mapsto X+ aY+\overline{a} Z \\ Z & \mapsto X+\overline{a}Y+aZ.}  We use $\mapsto$ to denote this substitution below.  As a preliminary step we prove the following weaker version of Theorem \ref{MainTheorem}.  

\begin{lemma}\label{MTweak}
For each fixed $v,j,k$ and $\epsilon \in \{\pm 1\}$, there exists
\est{h \in M_v +  M_{v,\lfloor \frac{j+k}{2}\rfloor}(1)+ M_{v,\lfloor \frac{j+k}{2}\rfloor}(2)+ M_{v,\lfloor \frac{j+k}{2}\rfloor}(4)}
such that $A_{q+1-j-k,j,k} = h(q)$ for all $v$\textsuperscript{th} powers of odd primes $q = p^v$ satisfying $q\equiv \epsilon \pmod 4$.
\end{lemma} 
\begin{proof}  By Proposition \ref{QRsing}, the substitution $\mapsto$ applied to $\QR^{\text{sing}}_{C_{1,4}}(X,Y,Z)$ produces only polynomials as coefficients.  Thus we need only consider the smooth part, $\QR^S_{C_{1,4}}(X,Y,Z)$.  We only consider the first term in Theorem \ref{QRWt},\es{\label{lemmasum}{(q-1)^2 q (q+1)} X\sum_{\substack{t^2\leq 4q \\ t\equiv 1\pmod*{2} }} N_A(t)  Y^{\frac{q-t}{2}} Z^{\frac{q+t}{2}},} 
the other terms being treated similarly.

We apply the MacWilliams substitution and the trinomial expansion to \eqref{lemmasum}.  We write the trinomial coefficients as 
\est{\binom{n}{a,b} \eqdef \frac{\Gamma(n+1)}{\Gamma(n+1-a-b)\Gamma(a+1)\Gamma(b+1)}.} 
This definition makes sense even when $a, b$ or $n-(a+b)$ is negative.  Then 
\es{\label{A1}Y^{\frac{q-t}{2}}Z^{\frac{q+t}{2}} \mapsto \\ \sum_{j_Y,j_Z,k_Y,k_Z} \binom{\frac{q-t}{2}}{j_Y,j_Z}\binom{\frac{q+t}{2}}{k_Y,k_Z} X^{q-j_Y-j_Z-k_Y-k_Z}(aY)^{j_Y}(\overline{a}Y)^{k_Y}(aZ)^{k_Z}(\overline{a}Z)^{j_Z}} 
and we multiply this expression by \est{X \mapsto X+\frac{q-1}{2}\left(Y+Z\right).}  
Expanding the result we have the coefficient of $X^{q+1-j-k}Y^jZ^k$ is 
\es{\label{sumexpression}\sum_{\substack{j_Y+k_Y = j \\ j_Z + k_Z = k}} \binom{\frac{q-t}{2}}{j_Y,j_Z}\binom{\frac{q+t}{2}}{k_Y,k_Z} a^{j_Y + k_Z} \overline{a}^{k_Y + j_Z} 
\\ +  \frac{q-1}{2}\sum_{\substack{j_Y+k_Y +1 = j \\ j_Z + k_Z = k}} \binom{\frac{q-t}{2}}{j_Y,j_Z}\binom{\frac{q+t}{2}}{k_Y,k_Z} a^{j_Y + k_Z} \overline{a}^{k_Y + j_Z}
\\+ \frac{q-1}{2}\sum_{\substack{j_Y+k_Y  = j \\ j_Z + k_Z +1= k}}\binom{\frac{q-t}{2}}{j_Y,j_Z}\binom{\frac{q+t}{2}}{k_Y,k_Z} a^{j_Y + k_Z} \overline{a}^{k_Y + j_Z}.}  
 For each fixed $j_Y,j_Z,k_Y,k_Z$ the product of two trinomial coefficients here is a polynomial in $q$ and $t$ of degree at most $j_Y+j_Z+k_Y+k_Z$ in $t$, all but finitely many of which are 0.  We need consider only the terms of the sum \eqref{sumexpression} which are of even degree in $t$ since the odd degree terms are all killed by the sum over $t$ later. Note then that for each fixed $j,k$ the even degree part of the sum \eqref{sumexpression} is a polynomial in $q$ and $t$. We call this polynomial $p_{j,k}(t,q).$  The coefficient of $X^{q+1-j-k}Y^jZ^k$ in the MacWilliams substitution applied to \eqref{lemmasum} is therefore 
\est{(q-1)^2 q (q+1) \sum_{\substack{t^2\leq 4q \\ t\equiv 1\pmod*{2} }} N_A(t) p_{j,k}(t,q),} 
and we may form expressions for these coefficients in terms of the sums $S^*_R(q)-S^*_{2,R}(q)$ for $0\leq R \leq \lfloor \frac{j+k}{2}\rfloor.$  
 
Applying Theorems \ref{birch} and \ref{Birchwith2tors} we get a formula for the coefficient of $X^{q+1-j-k}Y^jZ^k$ in the expression which results from applying $\mapsto$ to \eqref{lemmasum} involving polynomials in $p$, $\rho(q,k),\rho(q/p^2,k),\tau(q,k)$, and $\tau(q/p^2,k)$.  The other terms in Theorem \ref{QRWt} are treated similarly, using Theorems \ref{Birchwith2tors} and \ref{Birchwithfull2tors}. \end{proof}  

Now we proceed to prove the stronger statement of Theorem \ref{MainTheorem}.  Take the expression in Theorem \ref{QRWt} and consider it now not as a polynomial, but as a real-analytic function on the open octant $\R_{>0}^3$.  We can now rearrange the expression found in Theorem \ref{QRWt} to find $\QR_{C_{1,4}}^S(X,Y,Z)$ is equal to \hbox{$(q-1)^2q(q+1)$} times \es{\label{QRWt2} X \sum_{t^2\leq 4q} N_A(t) Y^{\frac{q-t}{2}}Z^{\frac{q+t}{2}}   + \frac{1}{2} \left(X-Y^{1/2}Z^{1/2}\right)^2 \sum_{\substack{t\equiv 0 \pmod* 2 \\ t^2\leq 4q}} N_{A}(t) Y^{\frac{q-t-1}{2}}Z^{\frac{q+t-1}{2}} \\ + \frac{1}{4} \left(X^2-YZ\right)^2 \sum_{\substack{t\equiv 0 \pmod* 2 \\ t^2\leq 4q}} N_{A,2 \times 2}(t) Y^{\frac{q-t-3}{2}}Z^{\frac{q+t-3}{2}}.}

Let us call the three terms in \eqref{QRWt2} by $f_1(X,Y,Z), f_2(X,Y,Z)$ and $f_{2\times 2}(X,Y,Z)$, respectively.  The term $f_{2\times 2}$ is a polynomial in $X,Y,Z,$ but neither $f_1$ nor $f_2$ are polynomials.  Next we apply the MacWilliams substitution to \eqref{QRWt2}, giving three new real-analytic functions \est{g_1(X,Y,Z) \eqdef & f_1\left(X+\frac{q-1}{2}\left( Y+Z\right),X+ aY+\overline{a} Z,  X+\overline{a}Y+aZ\right) \\ g_2(X,Y,Z) \eqdef & f_2\left(X+\frac{q-1}{2}\left( Y+Z\right),X+ aY+\overline{a} Z,  X+\overline{a}Y+aZ\right) \\ g_{2\times 2}(X,Y,Z) \eqdef & f_{2\times 2}\left(X+\frac{q-1}{2}\left( Y+Z\right),X+ aY+\overline{a} Z,  X+\overline{a}Y+aZ\right) .} 

\begin{lemma}
Each of $g_1,g_2, $ and $g_{2\times 2}$ admits a convergent Laurent series in a neighborhood of $(X,Y,Z) = \left(\infty, 0,0\right)$.  
\end{lemma}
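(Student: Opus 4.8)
The plan is to show that each of $f_1,f_2,f_{2\times 2}$ lies in $\C[X,Y,Z]+(YZ)^{1/2}\,\C[X,Y,Z]$ — that is, the only feature keeping it from being a polynomial is a single factor of $(YZ)^{1/2}$ — and then to observe that the MacWilliams substitution turns this factor into a function with a convergent Laurent expansion at $(\infty,0,0)$, while every polynomial term is trivially fine. Concretely I would write $f_i=A_i(X,Y,Z)+(YZ)^{1/2}B_i(X,Y,Z)$ with $A_i,B_i\in\C[X,Y,Z]$, where on $\R^3_{>0}$ the symbol $(YZ)^{1/2}$ denotes the positive square root and $B_{2\times 2}=0$.

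The first step is to justify that decomposition. For $f_1=X\sum_{t^2\le 4q}N_A(t)Y^{(q-t)/2}Z^{(q+t)/2}$, the key point is that in each monomial the exponents of $Y$ and $Z$ have sum $q$ and difference $t$; since $q$ is odd they are simultaneously non-negative integers when $t$ is odd, and simultaneously of the form (non-negative integer)$+\tfrac12$ when $t$ is even (here one uses $|t|\le 2\sqrt q<q$). Splitting the sum according to the parity of $t$ then exhibits $f_1$ in the claimed form. For $f_2$ and $f_{2\times 2}$ the sums run over even $t$, for which the relevant exponents $(q-t-1)/2,\ (q+t-1)/2$ (resp. $(q-t-3)/2,\ (q+t-3)/2$) are non-negative integers; so the sum part lies in $\C[Y,Z]$, and since $(X-Y^{1/2}Z^{1/2})^2=X^2+YZ-2X(YZ)^{1/2}$ and $(X^2-YZ)^2\in\C[X,Y,Z]$ this puts $f_2$ in the desired form and shows $f_{2\times 2}\in\C[X,Y,Z]$.

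The second step is the substitution itself. Apply $X\mapsto X'=X+\tfrac{q-1}{2}(Y+Z)$, $Y\mapsto Y'=X+aY+\bar aZ$, $Z\mapsto Z'=X+\bar aY+aZ$, so that $g_i=A_i(X',Y',Z')+(Y'Z')^{1/2}B_i(X',Y',Z')$. Since $A_i,B_i$ are polynomials, $A_i(X',Y',Z')$ and $B_i(X',Y',Z')$ are polynomials in $X,Y,Z$. Using only $a+\bar a=-1$ one computes
\[
Y'Z'=X^2-X(Y+Z)+a\bar a(Y^2+Z^2)+(a^2+\bar a^2)YZ=X^2\bigl(1+w\bigr),
\]
where $w$ is a polynomial in $Y/X$ and $Z/X$ vanishing at $Y/X=Z/X=0$. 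Hence $|w|<1$ on a neighborhood $U$ of $(\infty,0,0)$, the principal binomial series $(1+w)^{1/2}=\sum_{n\ge 0}\binom{1/2}{n}w^n$ converges there absolutely and locally uniformly, and regrouping gives a convergent power series $\sum_{i,j\ge 0}c_{ij}(Y/X)^i(Z/X)^j$. Therefore $(Y'Z')^{1/2}=X(1+w)^{1/2}=\sum_{i,j\ge 0}c_{ij}X^{1-i-j}Y^iZ^j$ is a convergent Laurent series on $U$ (its $X$-exponents are bounded above and its $Y,Z$-exponents are $\ge 0$); multiplying by the polynomial $B_i(X',Y',Z')$ and adding $A_i(X',Y',Z')$ then exhibits each $g_i$ as a convergent Laurent series on $U$.

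The step I expect to need the most care — essentially the only non-formal point — is matching branches of the square root: when $q\equiv 3\pmod 4$ the constants $a,\bar a$ are complex, so the $g_i$ are complex-valued, and one must check that the branch of $(Y'Z')^{1/2}$ obtained by analytically continuing the positive square root on $\R^3_{>0}$ along the substitution is indeed the principal branch used above. This is immediate near a point $(X_0,0,0)$ with $X_0$ a large positive real, since there $Y'Z'$ is close to $X_0^2>0$ and the continued value is the one near $X_0$, namely $X(1+w)^{1/2}$ with $(1+w)^{1/2}\to 1$. It is also worth noting that the ``same parity of the $Y$- and $Z$-exponents'' observation in the first step is exactly what rules out genuine half-integer powers of $X$ after the substitution, i.e. what makes the result an honest Laurent and not merely a Puiseux series.
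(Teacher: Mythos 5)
Your proof is correct and follows essentially the same route as the paper: both arguments reduce everything to the observation that the MacWilliams substitution sends $YZ$ to $X^2(1+w)$ with $w$ vanishing at $(\infty,0,0)$, and then invoke the binomial series for $(1+u)^{1/2}$ to get a convergent Laurent (not Puiseux) expansion. Your preliminary decomposition $f_i=A_i+(YZ)^{1/2}B_i$ is just a cleaner packaging of what the paper does term by term (expanding the half-integer powers $(1+u)^{(q\pm t)/2}$ directly for $g_1$), and your remark about matching branches of the square root is a worthwhile point the paper leaves implicit.
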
  

\begin{proof} We take $X^{-1},Y,Z$ for our variables around $(\infty,0,0)$.  The lemma is clear for $g_{2 \times 2}$.  
To prove the lemma for $g_2$ it suffices to show that the MacWilliams substitution applied to $\left(X-Y^{1/2}Z^{1/2}\right)^2$ is a Laurent series in $X^{-1},Y,Z$. 
Indeed, using the power series expansion for $(1+u)^{1/2}$ about $u=0$, which is absolutely and uniformly convergent on compacts in $|u|<1$, we have \est{\left(YZ\right)^{1/2} \mapsto & X \left(1-\frac{Y+Z}{X} + \frac{(aY+\overline{a}Z)(\overline{a}Y+aZ)}{X^2}\right)^{1/2} \\ & = X\left(1- \frac{Y+Z}{2X} + O_{Y,Z}(X^{-2})\right).}  
Here $O_{Y,Z}(X^{-2})$ represents the higher order terms in this Laurent series expansion which have at least order $2$ in the variable $X^{-1}$ and unspecified orders in $Y$ and $Z$.  We thus have \est{\left(X-Y^{1/2}Z^{1/2}\right)^2 \mapsto \left(\frac{q}{2}\right)^2 \left(Y+Z\right)^2 + O_{Y,Z}(X^{-1}).} 

The term $g_1$ is treated similarly.  For any $t \in \Z$ with $t^2\leq 4q$ we have \est{  Y^{\frac{q-t}{2}}Z^{\frac{q+t}{2}} \mapsto X^q\left(1+ \frac{aY+\overline{a}Z}{X}\right)^{\frac{q-t}{2}} \left(1+ \frac{\overline{a}Y+aZ}{X}\right)^{\frac{q+t}{2}}.}  We again apply the power series expansion for $(1+u)^{1/2}$ around $u=0$ to get a convergent Laurent series expansion.  \end{proof}

By the lemma it suffices to study the coefficient of $(1/X)^{j+k-(q+1)}Y^jZ^k$ in the Laurent series expansion of each of $g_1,g_2$ and $g_{2\times2}$ separately, and show the sum of the three coefficients is in the module prescribed by the statement of Theorem \ref{MainTheorem}.     
Following the same technique as Lemma \ref{MTweak} we pick out the coefficient of $(1/X)^{j+k-(q+1)}Y^jZ^k$ from $g_1$ and apply Theorem \ref{birch} to evaluate $S^*_R(q)$.  Theorem \ref{birch} only yields polynomials in $p$ and traces of Hecke operators on spaces of cusp forms for $\SL_2(\Z)$, so there is nothing more to show concerning $g_1$.  

We extract the  coefficient of $(1/X)^{j+k-(q+1)}Y^jZ^k$ in the Laurent series expansion of $g_2$.  It is given by a sum over $t$, and we study the highest  power of $t$ in the summand.  By Theorems \ref{birch}, \ref{Birchwith2tors} and \ref{Birchwithfull2tors} this will give the highest weight trace of a Hecke operator possible in the final expression for $\QR_{C_{1,q-5}}(X,Y,Z)$.  
 
Following the same reasoning as in the proof of Lemma \ref{MTweak} we have 
\es{\label{A}Y^{\frac{q-t-1}{2}}Z^{\frac{q+t-1}{2}} \mapsto \\ \sum_{j_Y,j_Z,k_Y,k_Z} \binom{\frac{q-1-t}{2}}{j_Y,j_Z}\binom{\frac{q-1+t}{2}}{k_Y,k_Z} X^{q-1-j_Y-j_Z-k_Y-k_Z}(aY)^{j_Y}(\overline{a}Y)^{k_Y}(aZ)^{k_Z}(\overline{a}Z)^{j_Z}} which we multiply by \es{\label{B}\frac{1}{2}\left(\left(\frac{q}{2}\right)^2 \left(Y+Z\right)^2 + O_{Y,Z}(X^{-1}) \right).}  
The coefficient of $(1/X)^{j+k-(q+1)}Y^jZ^k$ is given by the terms of \eqref{A} satisfying 
\est{ j_Y+k_Y +2= j \\ j_Z+k_Z=k,} 
or \est{ j_Y+k_Y+1= j \\ j_Z+k_Z+1=k,} 
or \est{ j_Y+k_Y= j\\ j_Z+k_Z+2=k.} 
The highest power of $t$ appearing in each of the 3 cases is $j+k-2$.  Thus the coefficient of $(1/X)^{j+k-(q+1)}Y^jZ^k$ in the Laurent series for $g_2$ is given by polynomials in $q$ and $S^*_{2,R}(q)$ for $0\leq R\leq \lfloor \frac{j+k-2}{2}\rfloor$.  

We may apply the same reasoning to $g_{2\times 2}.$ We have \es{\label{C} \frac{1}{4}\left(X^2-YZ\right)^2 \mapsto 
\frac{1}{4}q^2(Y+Z)^2X^2 + O_{Y,Z}(X),} where the $O_{Y,Z}(X)$ notation has the same meaning as before.  Note that the leading terms in \eqref{C} and \eqref{B} differ only by a factor of $2X^2$.  Applying the substitution $\mapsto$ to $Y^{\frac{q-t-3}{2}}Z^{\frac{q+t-3}{2}}$ and expanding with the trinomial expansion one gets an expression identical to \eqref{A1} but with each instance of $q-1$ replaced by $q -3$.  
As in the case of $g_2$, the highest power of $t$ in the coefficient of $(1/X)^{j+k-(q+1)}Y^jZ^k$ in $g_{2\times 2}$ is $j+k-2$.  Comparing \eqref{B} and \eqref{C} we see that the coefficient of $t^{j+k-2}$ within the coefficient of $(1/X)^{j+k-(q+1)}Y^jZ^k$ of $g_{2\times 2}$ differs from that of $g_2$ by exactly a factor of $2$ .

The term $\tr_{\Gamma_0(4),k}T_q$ appears in $\tau(q,k)$ and $\phi(q,k)$ with coefficients differing by a factor of $-1/2$.  Thus if $j+k-2$ is even then $\tr_{\Gamma_0(4),j+k}T_q$ always cancels out of the $(1/X)^{j+k-(q+1)}Y^jZ^k$ coefficient of $g_2+g_{2\times 2}$.  We have therefore that the Laurent series coefficients of $g_1+g_2+g_{2\times 2}$ lie in the prescribed module, and thus that the weight enumerator coefficients do as well.
 \end{proof}

\section{The Eichler-Selberg trace formula and the proof of theorem \ref{Birchwith2tors}}\label{proof}

Our main tool is the Eichler-Selberg trace formula.  
Our reference is Knightly and Li \cite{KnightlyLi} ``Statement of the final result''.   

Recall the Kronecker symbol $\left(\frac{\Delta}{n}\right)$, which we only use when $\Delta$ is a discriminant.  For $n$ an odd prime the Kronecker symbol is defined to be the quadratic residue symbol and for $n=2$ we define \est{\left(\frac{\Delta}{2}\right) \eqdef \begin{cases} 0 & \text{ if } 2 \mid \Delta, \\ 1 & \text{ if } \Delta \equiv 1 \pmod 8, \\ -1 & \text{ if } \Delta \equiv 5 \pmod 8 .\end{cases} }
\begin{lemma}\label{cor728}
For $d<0$, $d\equiv 0,1\pmod 4$ and $f \in \N$ we have \est{h_w(f^2d) = h_w(d) f \prod_{p \mid f} \left(1 - \legen{d}{p}\frac{1}{p}\right).}
\end{lemma}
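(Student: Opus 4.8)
The plan is to deduce Lemma~\ref{cor728} from the classical formula for the class number of a non-maximal imaginary quadratic order, the only real work being a short manipulation of Kronecker symbols. First I would factor $d = f_0^2 d_K$, where $d_K < 0$ is the fundamental discriminant of $\Q(\sqrt d)$ and $f_0 \in \N$ is the conductor of the order of discriminant $d$; then $f^2 d = (ff_0)^2 d_K$ is the discriminant of the order of conductor $ff_0$. The input I would use is the standard formula: if $\mathcal{O} \subset \mathcal{O}_K$ denotes the order of conductor $N$ in the maximal order $\mathcal{O}_K$ of discriminant $d_K$, then
\[
h(N^2 d_K) \;=\; \frac{N\, h(d_K)}{[\mathcal{O}_K^\times : \mathcal{O}^\times]} \prod_{p \mid N}\left(1 - \legen{d_K}{p}\frac{1}{p}\right).
\]

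Next I would observe that the weighting in the definition of $h_w$ is designed precisely to absorb the unit index. For $N > 1$ one has $\mathcal{O}^\times = \{\pm 1\}$, so $h_w(N^2 d_K) = h(N^2 d_K)$ and $[\mathcal{O}_K^\times : \mathcal{O}^\times] = \tfrac12 |\mathcal{O}_K^\times|$, which is exactly the integer by which $h(d_K)$ is divided in the definition of $h_w(d_K)$ (this is where the exceptional discriminants $-3,-4$ enter); for $N=1$ the statement is vacuous. Hence for every $N \ge 1$,
\[
h_w(N^2 d_K) \;=\; h_w(d_K)\, N \prod_{p\mid N}\left(1 - \legen{d_K}{p}\frac1p\right).
\]
Applying this with $N = f f_0$ and with $N = f_0$ and dividing, the factor $h_w(d_K)$ and all Euler factors at primes dividing $f_0$ cancel, leaving
\[
\frac{h_w(f^2 d)}{h_w(d)} \;=\; f \prod_{\substack{p \mid f \\ p \nmid f_0}}\left(1 - \legen{d_K}{p}\frac1p\right).
\]

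Finally I would reconcile this with the statement of the lemma, which has $\legen{d}{p}$ in place of $\legen{d_K}{p}$ and a product over all $p \mid f$. For $p \mid f$ with $p \nmid f_0$ we have $\legen{d}{p} = \legen{f_0^2}{p}\legen{d_K}{p} = \legen{d_K}{p}$ — for odd $p$ because $p \nmid f_0$, and for $p = 2$ because then $f_0$ is odd so $f_0^2 \equiv 1 \pmod 8$. For $p$ dividing $\gcd(f,f_0)$ we have $p \mid d$, hence $\legen{d}{p}=0$ and the Euler factor $1 - \legen{d}{p}/p = 1$ may be inserted into the product for free. Combining these gives $\prod_{p\mid f}\bigl(1 - \legen{d}{p}/p\bigr) = \prod_{p\mid f,\ p\nmid f_0}\bigl(1 - \legen{d_K}{p}/p\bigr)$, which finishes the proof. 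The only point needing any care — not really an obstacle — is the $p=2$ case of the Kronecker-symbol identity together with the unit-index bookkeeping for $d_K \in \{-3,-4\}$, which is exactly what the weighting $h_w$ is set up to handle cleanly.
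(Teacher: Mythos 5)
Your proposal is correct and rests on exactly the same input as the paper, which simply cites Corollary 7.28 of Cox's \emph{Primes of the form $x^2+ny^2$} (the conductor formula for class numbers of imaginary quadratic orders) as a standard result. You supply the details the paper omits --- applying that formula with conductors $ff_0$ and $f_0$, dividing, and checking that the weights $h_w$ absorb the unit index and that $\legen{d}{p}=\legen{d_K}{p}$ for $p\mid f$, $p\nmid f_0$ (including $p=2$) while the factors at $p\mid\gcd(f,f_0)$ are trivially $1$ --- and these details are all handled correctly.
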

\begin{proof} This is a standard result.  See e.g. Corollary 7.28 of \cite{primesoftheform}.  \end{proof}

\begin{prop}[ESTF for odd prime powers $q$ and $\Gamma_0(2)$]\label{gamma2}
Let $q=p^v$ be an odd prime power.  Let $t\in \Z$ run over integers such that $t^2< 4q$. Let $\alpha$ and $\overline{\alpha}$ be the two roots of $X^2-tX+q=0$ in $\C$. 

We have  
\est{\tr_{\Gamma_0(2), k} T_q = &\frac{k-1}{4} q^{\frac{k}{2}-1}\mathds{1}_{v \equiv 0 \pmod* 2} \\ & -\frac{1}{2} \sum_{t\equiv 0 \pmod* 2} \frac{\alpha^{k-1}- \overline{\alpha}^{k-1}}{\alpha-\overline{\alpha}} \sum_{\substack{m^2 \mid t^2-4q \\ \frac{t^2-4q}{m^2} \equiv 0,1 \pmod* 4 \\ \ord_2 m = 0}} h_w\left(\frac{t^2-4q}{m^2}\right) \\ 
& - \frac{3}{2} \sum_{t\equiv q+1 \pmod* 4} \frac{\alpha^{k-1}- \overline{\alpha}^{k-1}}{\alpha-\overline{\alpha}} H_w\left( \frac{t^2-4q}{4}\right)  \\ 
& -\sum_{0 \leq i \leq v}\min(p^i,p^{v-i})^{k-1} +  \sigma_1(q)\mathds{1}_{k=2}.} 
\end{prop}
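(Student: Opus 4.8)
The plan is to start from the Eichler-Selberg trace formula for $\Gamma_0(N)$ in the form stated in Knightly-Li \cite{KnightlyLi}, specialized to $N=2$, weight $k$, trivial character, and the Hecke operator $T_q$ with $q=p^v$ an odd prime power. That formula expresses $\tr_{\Gamma_0(2),k}T_q$ as a sum of four standard terms: the identity contribution $A_1$ (nonzero only when $q$ is a perfect square, contributing $\frac{k-1}{12}[\Gamma_0(1):\Gamma_0(2)]\,q^{k/2-1}$, and $[\SL_2(\Z):\Gamma_0(2)]=3$ gives the stated $\frac{k-1}{4}q^{k/2-1}\mathds{1}_{v\equiv 0(2)}$); the elliptic term $A_2$, a sum over $t$ with $t^2<4q$ of $\frac{\alpha^{k-1}-\overline\alpha^{k-1}}{\alpha-\overline\alpha}$ weighted by a sum of weighted class numbers times a local factor $\mu(t,f,N)$ counting solutions to the relevant congruence mod $N=2$; the hyperbolic term $A_3$, which for prime-power $q$ reduces to $-\sum_{d\mid q,\,0\le d}\min(d,q/d)^{k-1}$ with a combinatorial multiplicity equal to the number of cusps-type divisor count for $\Gamma_0(2)$ (this produces $-\sum_{0\le i\le v}\min(p^i,p^{v-i})^{k-1}$); and the term $A_4$, the dual/constant term contributing $\sigma_1(q)\mathds{1}_{k=2}$ when one accounts for the index. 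So the structure of the four displayed lines is exactly the four ESTF terms, and the entire content of the proof is the evaluation of the local factor $\mu(t,f,2)$ in the elliptic term.

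The key step is therefore the local computation at $p=2$. For $\Gamma_0(N)$ the elliptic term carries a factor counting $x \bmod N$ with $x^2 - tx + q \equiv 0 \pmod{N}$ (equivalently, embedding the order of discriminant $(t^2-4q)/f^2$ into $M_2(\Z/N)$ upper-triangular), and for $N=2$ this factor is $1+\left(\frac{\Delta_0}{2}\right)$ where $\Delta_0 = (t^2-4q)/f^2$ is the fundamental part, via the Kronecker symbol at $2$ defined in the excerpt. I would split the sum over $t$ (all $t$ with $t^2<4q$, since $\gcd(q,N)=1$ forces no further restriction) according to the value of $t^2-4q \bmod 8$: when $4\mid t$ one gets $t^2-4q\equiv -4q$, when $t$ is odd one gets $t^2-4q\equiv 1-4q$, and when $t\equiv 2\pmod 4$ one gets $t^2-4q\equiv 4-4q$. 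Tracking $\left(\frac{\Delta}{2}\right)$ through the divisors $m$ of the conductor and repackaging $\sum_{m} (\text{local factor})\, h_w(\cdot)$ into Hurwitz class numbers $H_w$ using Lemma \ref{cor728} (which controls how $h_w(f^2 d)$ scales) is the bookkeeping core of the argument. The outcome should be: the $t\equiv 0\pmod 2$ part contributes $-\tfrac12\sum h_w$ restricted to odd conductor $m$ (the second displayed line), and the $t\equiv q+1\pmod 4$ part — precisely the congruence making $(t^2-4q)/4$ a valid discriminant — contributes an extra $-\tfrac32\sum H_w((t^2-4q)/4)$ (the third line), the coefficient $3/2$ versus $1/2$ reflecting the index $3$ and the way the $2$-adic solution count splits between the "ramified" and "split" cases.

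The main obstacle I expect is exactly getting the constants $-\tfrac12$ and $-\tfrac32$ and the conductor restriction $\ord_2 m = 0$ right: this requires being careful with the normalization of the ESTF in \cite{KnightlyLi} (their $A_2$ already contains a $-\tfrac12$ overall, and their local factor at a prime dividing $N$ but not $q$ is a genuinely different function from the full $\mu$), with the definition of $h_w$ versus $H_w$ at the special discriminants $-3,-4$, and with the fact that some $t$ contribute to both the "$t\equiv 0\pmod 2$" line and the "$t\equiv q+1\pmod 4$" line when $q\equiv 3\pmod 4$ (then $q+1\equiv 0\pmod 4$ so $t\equiv q+1\pmod 4$ implies $t\equiv 0\pmod 2$), so the two lines are genuinely overlapping corrections rather than a partition. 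I would verify the final formula against the trivial cases $k=2$ (dimension of $S_2(\Gamma_0(2))=0$) and small $q$ to catch sign and constant errors before committing to it, and then note that the analogous $\Gamma_0(4)$ statement (needed elsewhere) is proved the same way with the $2$-adic local factor computed modulo $4$ instead of modulo $2$.
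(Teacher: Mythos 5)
Your proposal is correct and follows essentially the same route as the paper: the paper's own (omitted) proof is exactly the specialization of the Knightly--Li trace formula to $\Gamma_0(2)$, with the elliptic-term weights $\mu(t,m,q)$ evaluated by a case check on $\gcd(2,m)$ and on $t,q$ modulo powers of $2$, and the class numbers repackaged via Lemma \ref{cor728} — the same computation you outline, and the same one carried out in detail for $\Gamma_0(4)$ in Proposition \ref{gamma4}. Your identification of the delicate points (the normalization of the local factor, the role of Lemma \ref{cor728}, and the fact that the $t\equiv q+1\pmod 4$ line is an overlapping correction to the $t\equiv 0\pmod 2$ line rather than a disjoint case) matches where the actual work lies.
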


\begin{proof}
Proposition \ref{gamma2} is a simplification of the standard Eichler-Selberg trace formula, where we have performed a careful but tedious case check of the behavior of the Hecke polynomial $X^2-tX+q$ modulo $2$ and $4$, and the discriminant $t^2-4q$ modulo $16$. The details are similar to but less complicated than those of the proof of Proposition \ref{gamma4} so we omit them. 
\end{proof}

\begin{prop}[ESTF for odd prime powers $q$ and $\Gamma_0(4)$]\label{gamma4}
Let $q=p^v$ be an odd prime power.  Let $t\in \Z$ run over integers such that $t^2< 4q$. Let $\alpha$ and $\overline{\alpha}$ be the two roots of $X^2-tX+q=0$ in $\C$. 

We have \est{\tr_{\Gamma_0(4),k} T_q= & \frac{k-1}{2} q^{\frac{k}{2}-1}\mathds{1}_{v \equiv 0 \pmod* 2}  -3 \sum_{t\equiv q+1 \pmod* 4} \frac{\alpha^{k-1}- \overline{\alpha}^{k-1}}{\alpha-\overline{\alpha}} H_w\left( \frac{t^2-4q}{4}\right) \\ & - \frac{3}{2}\sum_{0 \leq i \leq v}\min(p^i,p^{v-i})^{k-1}+ \sigma_1(q)\mathds{1}_{k=2}.} 
\end{prop}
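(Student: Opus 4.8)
The plan is to obtain Proposition \ref{gamma4} by specializing the Eichler--Selberg trace formula for $T_q$ acting on $S_k(\Gamma_0(4))$ with trivial nebentypus, in the form recorded in \cite{KnightlyLi}. That formula writes $\tr_{\Gamma_0(4),k}T_q$ as a sum of four contributions --- an identity term, an elliptic term, a hyperbolic term, and a term occurring only when $k=2$ --- and the point is that, since $p\neq 2$ and hence $q$ is a $2$-adic unit, three of the four collapse immediately while the fourth reduces to a finite local computation at the prime $2$. The identity term is nonzero only when $q$ is a square, in which case it equals $\frac{k-1}{12}\psi(4)\,q^{k/2-1}$; since $\psi(4)=[\SL_2(\Z):\Gamma_0(4)]=6$ this is $\frac{k-1}{2}q^{k/2-1}\mathds{1}_{2\mid v}$. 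The $k=2$ term is a sum of $d$ over divisors $d$ of $q$ meeting a coprimality condition with the level, and since $q=p^v$ with $p$ odd every divisor qualifies, giving $\sigma_1(q)$. The hyperbolic term is a weighted sum over factorizations $q=p^i\cdot p^{v-i}$ of $\min(p^i,p^{v-i})^{k-1}$; because $\gcd(q,4)=1$ the relevant $2$-adic weight is independent of $i$, and together with the overall normalization it produces exactly $-\frac32\sum_{0\le i\le v}\min(p^i,p^{v-i})^{k-1}$.

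The real content is the elliptic term, which has the shape
\[
-\frac12\sum_{t^2<4q} P_k(t,q)\,\sum_{f} h_w\!\left(\frac{t^2-4q}{f^2}\right)\nu(t,f,q),
\]
where $P_k(t,q)=\frac{\alpha^{k-1}-\overline\alpha^{k-1}}{\alpha-\overline\alpha}$ with $\alpha,\overline\alpha$ the roots of $X^2-tX+q$, the inner sum runs over $f\ge 1$ with $f^2\mid t^2-4q$ and $(t^2-4q)/f^2\equiv 0,1\pmod 4$, and $\nu(t,f,q)$ is a purely $2$-adic orbital factor (the local factors at all odd primes are trivial since they do not divide the level $4$). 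The first step is to pin down $\nu(t,f,q)$ explicitly: it depends only on the reductions of $t$ and $q$ modulo a bounded power of $2$ and on $\ord_2 f$, and it is controlled by the number of roots of $X^2-tX+q$ in $\Z/2^j\Z$. Since $q$ is odd, a Hensel-type lifting argument shows this root count stabilizes once $2^j>\ord_2(t^2-4q)$, so it suffices to examine $X^2-tX+q$ modulo $2,4,8$ and $16$ and the discriminant $t^2-4q$ modulo $16$. Carrying this out shows that $\nu(t,f,q)$ is supported on the pairs with $t\equiv q+1\pmod 4$ and $2\mid f$, where it takes the constant value $6$; the restriction on $t$ is exactly the one in the statement, and it also explains why the odd traces $t=\pm\sqrt q$ (when $q$ is a square) drop out, consistent with the absence of a separate degenerate-elliptic correction.

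Granting this, one reorganizes the $f$-sum over the surviving range $t\equiv q+1\pmod 4$. Writing $f=2^a m$ with $m$ odd, $a\ge 1$, and using Lemma \ref{cor728}, the class number $h_w((t^2-4q)/f^2)$ factors through its $2$-primary and prime-to-$2$ parts; the sum over admissible odd $m$ reconstitutes the prime-to-$2$ part of the Hurwitz--Kronecker class number, while the sum over $a$ --- whose range is automatically cut off by $\ord_2(t^2-4q)$ --- assembles the $2$-primary part. Since $t\equiv q+1\pmod 4$ forces $4\mid t^2-4q$ with $(t^2-4q)/4\equiv 0,1\pmod 4$, the condition $2\mid f$ is precisely the condition that selects the conductors entering $H_w((t^2-4q)/4)$, and the net identity is $\sum_f h_w((t^2-4q)/f^2)\,\nu(t,f,q)=6\,H_w\!\left(\frac{t^2-4q}{4}\right)$; the replacement of $t^2-4q$ by $(t^2-4q)/4$ records exactly that the $\Gamma_0(4)$ level condition extracts one factor of $4$ from the discriminant. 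Combined with the overall $-\frac12$ this gives the claimed elliptic term $-3\sum_{t\equiv q+1\,(4)}P_k(t,q)H_w((t^2-4q)/4)$, and assembling the four contributions finishes the proof.

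The step I expect to be the main obstacle is precisely this $2$-adic bookkeeping: one must organize the case analysis according to $q\bmod 8$ (and $q\bmod 16$ in the tightest configurations), $t\bmod 4$, and the value of $\ord_2(t^2-4q)$, and then separately treat the regime in which the conductor sum has many terms and the regime in which it is truncated to one or two. The delicate configurations are those with $t^2-4q\equiv 8$ or $12\pmod{16}$ and the edge value $t=0$ (which occurs exactly when $q\equiv 3\pmod 4$), where correctly tracking the power of $2$ in the conductor and the normalizing constant $6$ is error-prone; the remaining cases are routine substitution into the trace formula. The companion Proposition \ref{gamma2} for $\Gamma_0(2)$ is proved by the same method, with $\psi(2)=3$ and a somewhat lighter local analysis at $2$ --- in particular its elliptic term retains an extra sum over even $t$ with odd conductor that no longer survives at level $4$.
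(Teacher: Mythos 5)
Your overall strategy coincides with the paper's: specialize the Knightly--Li form of the trace formula to level $4$ and odd $q$, note that the identity, hyperbolic, and $k=2$ terms collapse as you describe, and reduce everything to a local computation at $2$ in the elliptic term followed by a reassembly via Lemma \ref{cor728}. The target identity you state, $\sum_f h_w\left((t^2-4q)/f^2\right)\nu(t,f,q)=6\,H_w\left((t^2-4q)/4\right)$ supported on $t\equiv q+1\pmod 4$, is exactly the identity the paper proves. But the asserted \emph{outcome} of your local computation is wrong, and since that computation is the whole content of the proposition, this is a genuine gap. The weight $\nu(t,f,q)=\frac{\psi(4)}{\psi(4/(4,f))}\,\#\{c\in(\Z/4\Z)^\times : c^2-tc+q\equiv 0\pmod{4(4,f)}\}$ is \emph{not} supported on $2\mid f$, and it is \emph{not} constantly $6$ where it is nonzero. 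For odd $f$ one has $(4,f)=1$ and the count runs over solutions modulo $4$ only, which gives $\nu=2$ whenever $t\equiv q+1\pmod 4$; for instance with $q=5$, $t=2$, $f=1$, both $c=1$ and $c=3$ solve $c^2-2c+5\equiv 0\pmod 4$, so $\nu=2\neq 0$. For $\ord_2 f=1$ the value is $2$ or $4$ depending on $q$ and $t$ modulo $8$, and only for $4\mid f$ does one get $6$, and then only for $t$ in certain classes modulo $16$ (this case cannot even occur when $q\equiv 3\pmod 4$).

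Because of this, the $f$-sum does not telescope into $6H_w((t^2-4q)/4)$ by re-indexing $f=2g$ as you propose. One must instead convert the odd-conductor contributions $2\,h_w((t^2-4q)/m^2)$ into multiples of $h_w((t^2-4q)/(2m)^2)$ using Lemma \ref{cor728}; the conversion factor there, $2-\legen{(t^2-4q)/(4m^2)}{2}\in\{1,2,3\}$ (or $2$ when $(t^2-4q)/4$ is even), varies with $(t^2-4q)/4$ modulo $8$, and it is precisely this variation that compensates for the non-constant weights $2,4,6$ on the even conductors so that every congruence case sums to $6$. Your description of the reassembly (``the sum over odd $m$ reconstitutes the prime-to-$2$ part, the sum over $a$ the $2$-primary part'') presupposes the incorrect support claim and therefore does not constitute a proof. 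The case analysis you correctly flag as the main obstacle genuinely has to be carried out, and its answer is a table of values of $\nu(t,m,q)$ indexed by $(4,m)$ and the classes of $q$ and $t$ modulo $4$, $8$, and $16$, not the clean formula you assert.
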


\begin{proof}
Apart from trivial simplifications the formula above differs from that appearing in Knightly and Li \cite{KnightlyLi} only by the weights appearing in the sum over $t$.  Specifically, to derive Proposition \ref{gamma4} from the standard formula in \cite{KnightlyLi} it suffices to show 
\est{ 3\cdot \mathds{1}_{t\equiv q+1\pmod* 4}  H_w\left( \frac{t^2-4q}{4}\right) = \frac{1}{2} \sum_{\substack{m^2 \mid t^2-4q \\ \frac{t^2-4q}{m^2} \equiv 0,1\pmod* 4}} h_w\left(\frac{t^2-4q}{m^2}\right)\mu(t,m,q),} where \est{\mu(t,m,q) \eqdef \frac{\psi(4)}{\psi(4/(4,m))} \sum_{c \in (\Z/4\Z)^\times} 1,
}  
$c$ runs through all elements of $(\Z/4\Z)^\times$ that lift to solutions of $c^2-tc+q\equiv 0 \pmod {4 (4,m)}, $ and 
\est{
\psi(N) \eqdef [\SL_2(\Z):\Gamma_0(N)] = N \prod_{\ell \mid N} \left(1+\frac{1}{\ell}\right).
}

We check the cases $(4,m)=1,2,4$ one-by-one and after a lengthy but trivial computation derive
\est{\mu(t,m,q) = \begin{cases} 2 \cdot \mathds{1}_{t\equiv 0 \pmod* 4} & \text{ if } q \equiv 3 \pmod 4 \text{ and } (4,m)=1 \\  
2 \cdot\mathds{1}_{t\equiv 2 \pmod* 4} & \text{ if } q \equiv 1 \pmod 4 \text{ and } (4,m)=1 \\  
2 \cdot\mathds{1}_{t\equiv 2 \pmod* 4} & \text{ if } q \equiv 1 \pmod 4 \text{ and } (4,m)=2 \\  
4\cdot\mathds{1}_{t\equiv 4 \pmod* 8} & \text{ if } q \equiv 3 \pmod 8 \text{ and } (4,m)=2 \\ 
4\cdot\mathds{1}_{t\equiv 0 \pmod* 8} & \text{ if } q \equiv 7 \pmod 8 \text{ and } (4,m)=2 \\ 
6\cdot\mathds{1}_{t\equiv \pm 2 \pmod* {16}} & \text{ if } q \equiv 1,13 \pmod {16} \text{ and } (4,m)=4 \\  
6\cdot\mathds{1}_{t\equiv \pm 6 \pmod* {16}} & \text{ if } q \equiv 5,9 \pmod {16} \text{ and } (4,m)=4 \\ 
\text{does not occur} & \text{ if } q\equiv 3 \pmod 4 \text{ and } (4,m)=4. \end{cases}} If $q \equiv 3 \pmod 4$ then $t^2-4q$ can only take the values $4,5,8,13 \pmod {16}$, hence the last entry above.

In all cases above we have $4 \mid t^2-4q$, so we use Lemma \ref{cor728} to re-write the $\ord_2 m =0$ cases in the above in terms of $m$ with $\ord_2 m=1$.  First assume $q\equiv 3 \pmod 4$, and that $\ord_2 m = 0$.  We know $\ord_2 (t^2-4q)=2$ and $t\equiv 0 \pmod 4$ so that 
\est{ \frac{t^2-4q}{4m^2} \equiv \begin{cases} 1 \pmod 8 & \text{ if } t+q \equiv 7 \pmod 8 \\ 5 \pmod 8 & \text{ if } t+q \equiv 3 \pmod 8. \end{cases}} By Lemma \ref{cor728} if $\ord_2 m=0$ and $q \equiv 3 \pmod 4$ then \est{ h_w\left(\frac{t^2-4q}{m^2}\right)=h_w\left(\frac{t^2-4q}{4m^2} \right) \begin{cases} 1 & \text{ if } t+q \equiv 7 \pmod 8 \\ 3 & \text{ if } t+q \equiv 3 \pmod 8. \end{cases} }

Now we turn to the $q\equiv 1\pmod 4$ and $\ord_2 m = 0$ case.  One easily checks that $t\equiv 2 \pmod 4$ and $q \equiv 1 \pmod 4$ imply $\ord_2(t^2-4q) \geq 3$ thus 
\est{h_w\left(\frac{t^2-4q}{m^2}\right)=2h_w\left( \frac{t^2-4q}{4m^2}\right).}
Collecting all the above terms, one arrives at the claimed formula.
\end{proof}

Recall the definition of $\tau(q,k)$ from just above Theorem \ref{Birchwith2tors}.

\begin{lemma}\label{MainProp}
Suppose $q$ is an odd prime power. Let $\alpha,\overline{\alpha} \in \C$ be solutions to $X^2-tX+q=0$ and $H_w(\Delta)$ as in Section \ref{genus1curves}. We have \est{ \frac{1}{2}\sum_{\substack{t\equiv 0 \pmod* 2 \\ t^2<4q}} \frac{\alpha^{k-1}- \overline{\alpha}^{k-1}}{\alpha-\overline{\alpha}} H_w(t^2-4q) = \tau(q,k).}
\end{lemma}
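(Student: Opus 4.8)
The idea is to extract the identity directly from the two explicit trace formulas already established, Proposition~\ref{gamma2} and Proposition~\ref{gamma4}, by forming exactly the combination $\tfrac13\tr_{\Gamma_0(4),k}T_q-\tr_{\Gamma_0(2),k}T_q$ that appears in the definition of $\tau(q,k)$. Write $U_k(t)=\frac{\alpha^{k-1}-\overline{\alpha}^{k-1}}{\alpha-\overline{\alpha}}$, with $\alpha,\overline{\alpha}$ the roots of $X^2-tX+q$, and abbreviate
\[
A=\sum_{\substack{t\equiv q+1\pmod 4\\ t^2<4q}}U_k(t)\,H_w\!\left(\frac{t^2-4q}{4}\right),\qquad
B=\sum_{\substack{t\equiv 0\pmod 2\\ t^2<4q}}U_k(t)\sum_{\substack{m^2\mid t^2-4q\\ \ord_2 m=0}}h_w\!\left(\frac{t^2-4q}{m^2}\right).
\]
Substituting the formulas of Propositions~\ref{gamma2} and~\ref{gamma4} into the definition of $\tau(q,k)$, a direct bookkeeping check (the coefficients have been arranged precisely for this) shows that the contributions $\tfrac{k-1}{12}q^{k/2-1}\mathds{1}_{v\equiv 0\pmod 2}$, $\tfrac12\sum_{0\le i\le v}\min(p^i,p^{v-i})^{k-1}$ and $\tfrac23\sigma_1(q)\mathds{1}_{k=2}$ all cancel, leaving the clean relation $\tau(q,k)=\tfrac12 A+\tfrac12 B$.

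It then remains to identify $\tfrac12(A+B)$ with $\tfrac12\sum_{t\equiv 0\pmod 2,\ t^2<4q}U_k(t)H_w(t^2-4q)$, i.e.\ to prove that for each even $t$ with $t^2<4q$,
\[
H_w(t^2-4q)=\sum_{\substack{m^2\mid t^2-4q\\ \ord_2 m=0}}h_w\!\left(\frac{t^2-4q}{m^2}\right)+\mathds{1}_{t\equiv q+1\pmod 4}\,H_w\!\left(\frac{t^2-4q}{4}\right).
\]
To prove this I would split the defining sum $H_w(t^2-4q)=\sum_{d^2\mid t^2-4q,\ (t^2-4q)/d^2\equiv 0,1\pmod 4}h_w((t^2-4q)/d^2)$ according to the parity of $d$. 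Since $t$ is even we have $4\mid t^2-4q$, so for odd $d$ the quotient $(t^2-4q)/d^2$ is still $\equiv 0\pmod 4$ and the congruence constraint is automatic; the odd-$d$ part is then exactly the first sum above. For even $d$, write $d=2e$; using $4\mid t^2-4q$ the conditions become $e^2\mid (t^2-4q)/4$ together with $\bigl((t^2-4q)/4\bigr)/e^2\equiv 0,1\pmod 4$, so the even-$d$ part is by definition $H_w\bigl((t^2-4q)/4\bigr)$.

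Finally $\tfrac14(t^2-4q)=(t/2)^2-q$, and a one-line residue computation modulo $4$ — distinguishing $t\equiv 0$ from $t\equiv 2\pmod 4$, where the value is $\equiv-q$ respectively $\equiv 1-q\pmod 4$, and using that $q$ is odd — shows this integer is $\equiv 0,1\pmod 4$, the only case in which $H_w$ of it can be nonzero, precisely when $t\equiv q+1\pmod 4$; otherwise it is $\equiv 2$ or $3\pmod 4$ and $H_w$ of it vanishes. This yields the displayed identity, and summing against $U_k(t)$ over even $t$ with $t^2<4q$ gives $\tau(q,k)=\tfrac12\sum_{t\equiv 0\pmod 2,\ t^2<4q}U_k(t)H_w(t^2-4q)$, as claimed. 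The only place where care is needed — the ``main obstacle'', though it is entirely elementary — is this $2$-adic bookkeeping: matching the $\ord_2 m=0$ restriction in Proposition~\ref{gamma2} with the odd part of the Hurwitz sum, and pinning down the residue of $(t^2-4q)/4$ modulo $4$; no analytic input is needed beyond the two trace-formula evaluations already in hand.
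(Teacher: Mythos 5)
Your proposal is correct and follows essentially the same route as the paper: form the combination $\tfrac13\tr_{\Gamma_0(4),k}T_q-\tr_{\Gamma_0(2),k}T_q$ from Propositions~\ref{gamma4} and~\ref{gamma2}, observe that the non-class-number terms cancel against the remaining terms in the definition of $\tau(q,k)$, and then match the $\ord_2 m=0$ sum with the odd-divisor part of $H_w(t^2-4q)$ and the $t\equiv q+1\pmod 4$ sum with the even-divisor part via $H_w\bigl((t^2-4q)/4\bigr)$. The only difference is that you spell out the $2$-adic bookkeeping (parity split of $d$, and the residue of $(t/2)^2-q$ modulo $4$) that the paper compresses into ``one checks that these $m$ give the complete list defining $H_w(t^2-4q)$.''
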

\begin{proof} 
Take $2/3$ times the formula of Proposition \ref{gamma4} minus $2$ times the result of Proposition \ref{gamma2}.  One finds that 
\begin{eqnarray*}
\tau(q,k) & = &   \frac{1}{2} \sum_{t\equiv 0 \pmod* 2} \frac{\alpha^{k-1}- \overline{\alpha}^{k-1}}{\alpha-\overline{\alpha}} \sum_{\ord_2 m = 0} h_w\left(\frac{t^2-4q}{m^2}\right)\\ 
& & +   \frac{1}{2}\sum_{t\equiv q+1 \pmod* 4} \frac{\alpha^{k-1}- \overline{\alpha}^{k-1}}{\alpha-\overline{\alpha}} \sum_{\ord_2 m\geq 1} h_w\left(\frac{t^2-4q}{m^2} \right).
\end{eqnarray*}
In each case $q \equiv 1,3 \pmod 4$ one checks that these $m$ give the complete list defining $H_w(t^2-4q)$, hence Lemma \ref{MainProp} follows.
\end{proof}

For general prime powers $q$ we need some additional definitions.  Let 
\est{\omega(q,k) \eqdef  \frac{1}{2} \sum_{\substack{t\equiv 0 \pmod* 2 \\ p \nmid t\\ t^2<4q}} \frac{\alpha^{k-1}- \overline{\alpha}^{k-1}}{\alpha-\overline{\alpha}} H_w(t^2-4q) = \sum_{\substack{t\equiv 0 \pmod* 2 \\ p \nmid t\\ t^2<4q}} \frac{\alpha^{k-1}- \overline{\alpha}^{k-1}}{\alpha-\overline{\alpha}} N_A(t)} 
be the contribution from ordinary elliptic curves.  We set 
\est{A_q \eqdef \frac{\alpha_0^{k-1}- \overline{\alpha_0}^{k-1}}{\alpha_0-\overline{\alpha_0}}} 
where $\alpha_0$ and $\overline{\alpha_0}$ be the two roots in $\C$ of $X^2+q=0$.  We also let \est{\omega'(q,k) \eqdef \frac{1}{2} \sum_{\substack{t\equiv 0 \pmod* 2 \\ p \nmid t\\ t^2<4q}} \frac{\alpha^{k-1}- \overline{\alpha}^{k-1}}{\alpha-\overline{\alpha}} H_w(t^2-4q) + A_q N_A(0) } be the contribution from elliptic curves whose ring of endomorphisms over $\F_q$ is an order in a quadratic field.  Lastly, recall we set \est{\tau(1,k) = \frac{i^{k-2}}{4} \,\,\, \text{ and } \,\,\, \tau(p^{-1},k)=0.}

\begin{lemma}\label{ord}  
Suppose $q = p^v$ is an odd prime power with $0\leq 2i < v$. We have 
\est{(p^i)^{k-1}\omega(q/p^{2i},k)  = \frac{1}{2} \sum_{\substack{t^2<4q \\ \ord_p t =i \\ t\equiv 0 \pmod* 2}} \frac{\alpha^{k-1}- \overline{\alpha}^{k-1}}{\alpha-\overline{\alpha}} H_w(t^2-4q) .}
\end{lemma}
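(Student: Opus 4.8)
The plan is to reduce the identity to the change of variables $t = p^i t'$ combined with a scaling law for the Hurwitz--Kronecker class number. Write $q' = q/p^{2i} = p^{v-2i}$; since $0 \le 2i < v$ we have $v - 2i \ge 1$, so $p \mid q'$. Because $p$ is odd, the map $t' \mapsto t = p^i t'$ is a bijection from $\{t' \in \Z : p\nmid t',\ t'\equiv 0 \pmod* 2,\ t'^2 < 4q'\}$ onto $\{t\in\Z : \ord_p t = i,\ t\equiv 0\pmod* 2,\ t^2 < 4q\}$, using that $p^{2i}t'^2 < 4q \iff t'^2 < 4q'$. If $\beta,\overline{\beta}$ denote the roots of $X^2 - t'X + q' = 0$, then $p^i\beta, p^i\overline{\beta}$ are the roots of $X^2 - tX + q = 0$; that is, $\alpha = p^i\beta$ and $\overline{\alpha} = p^i\overline{\beta}$, so
\[
\frac{\beta^{k-1} - \overline{\beta}^{k-1}}{\beta - \overline{\beta}} = p^{-i(k-2)}\,\frac{\alpha^{k-1} - \overline{\alpha}^{k-1}}{\alpha - \overline{\alpha}}.
\]

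The second ingredient is the identity $H_w(t^2 - 4q) = p^i\,H_w(t'^2 - 4q')$. Since $t^2 - 4q = p^{2i}(t'^2 - 4q')$, this is the special case of the scaling relation $H_w(\ell^{2a}D) = \ell^a H_w(D)$ valid whenever $\ell \nmid D$ and $\legen{D}{\ell} = 1$. Here the hypothesis holds with $\ell = p$, $D = t'^2 - 4q'$: because $p \mid q'$ we have $t'^2 - 4q' \equiv t'^2 \pmod* p$, and $p \nmid t'$, so $D$ is a nonzero square modulo $p$ and $\legen{D}{p} = 1$. For the scaling relation itself I would invoke Lemma \ref{cor728}: writing $D = D_0 g^2$ with $D_0$ fundamental and (necessarily) $p \nmid g$, the function $e \mapsto h_w(D_0 e^2)/h_w(D_0)$ is multiplicative with local factor $1 + \bigl(1 - \legen{D_0}{p}p^{-1}\bigr)\sum_{b=1}^{i} p^b$ at $p$; since $\legen{D_0}{p} = \legen{D}{p} = 1$ this factor equals $1 + (1 - p^{-1})\cdot p\cdot\tfrac{p^i - 1}{p - 1} = p^i$, and comparing the sum over divisors of $p^i g$ with the sum over divisors of $g$ gives $H_w(D_0(p^i g)^2) = p^i H_w(D_0 g^2)$.

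Assembling the pieces, I would substitute $t = p^i t'$ in the right-hand side of the Lemma and feed in the two displayed relations:
\[
\frac{1}{2}\sum_{\substack{t^2 < 4q\\ \ord_p t = i\\ t\equiv 0\pmod* 2}}\frac{\alpha^{k-1} - \overline{\alpha}^{k-1}}{\alpha - \overline{\alpha}}H_w(t^2 - 4q)
= \frac{1}{2}\sum_{\substack{t'^2 < 4q'\\ p\nmid t'\\ t'\equiv 0\pmod* 2}} p^{i(k-2)}\,\frac{\beta^{k-1} - \overline{\beta}^{k-1}}{\beta - \overline{\beta}}\cdot p^{i}\,H_w(t'^2 - 4q').
\]
Since $p^{i(k-2)}\cdot p^i = p^{i(k-1)} = (p^i)^{k-1}$, the right-hand side equals $(p^i)^{k-1}\omega(q/p^{2i},k)$ by the definition of $\omega$, which is precisely the left-hand side of the Lemma. (The case $i = 0$ is the trivial identity $\omega(q,k) = \omega(q,k)$.)

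I expect the only substantive point to be the class-number scaling step: confirming that $D = t'^2 - 4q'$ is prime to $p$ and a square modulo $p$ — this is exactly where the hypothesis $2i < v$ is used, via $p \mid q/p^{2i}$ — and then carrying out the elementary local-factor computation from Lemma \ref{cor728}. Everything else (the bijection on the index set, the passage $\alpha = p^i\beta$, and the final collection of powers of $p$) is routine bookkeeping.
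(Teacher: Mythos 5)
Your proof is correct and follows essentially the same route as the paper: the substitution $t = p^i t'$, the observation that the roots scale as $\alpha = p^i\beta$, and the class-number identity $H_w(p^{2i}\Delta) = p^i H_w(\Delta)$ deduced from Lemma \ref{cor728} using that $\Delta = t'^2 - 4q/p^{2i}$ is a nonzero square modulo $p$. The paper organizes the last step by expanding $p^i = 1 + \sum_{j=1}^{i} p^j(1-1/p)$ and matching square divisors of $p^{2i}\Delta$ by their $p$-adic valuation rather than by factoring out the fundamental discriminant, but the content is identical.
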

\begin{proof} 
Observe that if $0 < i$
\est{ p^i = p^i\left(1-\frac{1}{p}\right) + p^{i-1} \left(1-\frac{1}{p}\right)  + \cdots + p\left(1-\frac{1}{p}\right) +1,}
and 
if $0\leq2i<v$ 
\est{ (p^i)^{k-1} \omega(q/p^{2i},k) = \frac{1}{2} \sum_{\substack{(p^it)^2<4q \\ p \nmid t \\ t \equiv 0 \pmod* 2}} \frac{\alpha^{k-1} - \overline{\alpha}^{k-1}}{\alpha-\overline{\alpha}} p^iH_w(t^2-4q/p^{2i})} 
with $\alpha, \overline{\alpha}$ being solutions in $\C$ to $X^2-(p^it)X+q$.  

Let $\Delta = t^2-4q/p^{2i} = ((p^i t)^2-4q)/p^{2i}.$ For such $\Delta$ we have that 
\est{p^i H_w(\Delta) = H_w(\Delta) + \sum_{j=1}^ip^j\left(1-\frac{1}{p}\right)H_w(\Delta).}  
Note that since $p \nmid t$ that $\Delta$ is a non-zero square modulo $p$.  Moreover, if $d^2 \mid \Delta$ then we also have that $p\nmid d$, and that $\Delta / d^2$ is a non-zero square modulo $p$.    The definition of $H_w$ and Lemma \ref{cor728} imply that 
\est{ p^iH_w(\Delta) & = \sum_{\substack{d'^2 \mid p^{2i}\Delta  \\ \frac{p^{2i}\Delta}{d'^2} \equiv 0,1(4) \\ \ord_p d' =i}} h_w\left( \frac{p^{2i}\Delta}{d'^2}\right) + \sum_{j=1}^i \sum_{\substack{d'^2 \mid p^{2i}\Delta \\ \frac{p^{2i}\Delta}{d'^2} \equiv 0,1(4) \\ \ord_p d' =i-j}} h_w\left( \frac{p^{2i}\Delta}{d'^2}\right) \\ & = H_w(p^{2i}\Delta)  = H_w\left((p^it)^2-4q\right).}  
Thus we have the lemma. 
\end{proof}

\begin{lemma}\label{ML}
Suppose $q = p^v$ is an odd prime power.  We have that \est{ \omega'(q,k) = \tau(q,k) - p^{k-1}\tau(q/p^2,k).}
\end{lemma}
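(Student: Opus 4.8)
The plan is to evaluate $\tau(q,k)-p^{k-1}\tau(q/p^2,k)$ directly from Lemmas~\ref{MainProp} and~\ref{ord}, watch the bulk of the terms telescope, and identify what remains with $\omega'(q,k)$ by means of Theorem~\ref{S46withweights}. Throughout I take $k\geq 2$ even, as in all of our applications (this is also the regime in which Lemma~\ref{MainProp} is meant). First I would use Lemma~\ref{MainProp} to write
\[
\tau(q,k)=\frac12\sum_{\substack{t\equiv 0\,(2)\\ t^2<4q}}\frac{\alpha^{k-1}-\overline{\alpha}^{k-1}}{\alpha-\overline{\alpha}}\,H_w(t^2-4q),
\]
with $\alpha,\overline{\alpha}$ the roots of $X^2-tX+q$, and split off the term $t=0$, which contributes exactly $\tfrac12 A_q H_w(-4q)$. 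The key point is that, $p$ being odd, every even $t\neq 0$ with $t^2<4q$ has $\ord_p t<v/2$: writing $t=p^i s$ with $p\nmid s$ forces $s$ even, hence $s^2\geq 4$, hence $4p^{2i}\leq p^{2i}s^2<4p^v$ and so $i<v/2$. Grouping the remaining terms by $i=\ord_p t\in\{0,1,\dots,\lceil v/2\rceil-1\}$ and applying Lemma~\ref{ord} to each group yields the closed form
\[
\tau(q,k)=\tfrac12 A_q H_w(-4q)+\sum_{i=0}^{\lceil v/2\rceil-1}(p^i)^{k-1}\,\omega(q/p^{2i},k).
\]

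Next I would apply this same identity with $q$ replaced by $q/p^2$: for $v\geq 3$ it is literally the same formula; for $v=2$ it is read off the convention $\tau(1,k)=\tfrac{i^{k-2}}{4}=\tfrac12 A_1 H_w(-4)$ (a one-line check, since $A_1=i^{k-2}$ and $H_w(-4)=\tfrac12$); and for $v=1$ one has $\tau(p^{-1},k)=0$ with all terms absent. Multiplying by $p^{k-1}$ and subtracting, and using $p^{k-1}(p^i)^{k-1}=(p^{i+1})^{k-1}$, the two sums of $\omega$-terms cancel except for the $i=0$ term of the first; this leaves
\[
\tau(q,k)-p^{k-1}\tau(q/p^2,k)=\omega(q,k)+\tfrac12\Bigl(A_q H_w(-4q)-p^{k-1}A_{q/p^2}H_w(-4q/p^2)\Bigr).
\]
Since $\omega'(q,k)=\omega(q,k)+A_q N_A(0)$ by definition, the lemma becomes $\tfrac12\bigl(A_q H_w(-4q)-p^{k-1}A_{q/p^2}H_w(-4q/p^2)\bigr)=A_q N_A(0)$. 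A short computation from $\alpha_0=\sqrt{-q}$ (using that $k-1$ is odd) gives $A_q=i^{k-2}q^{k/2-1}$, whence $p^{k-1}A_{q/p^2}=p\,A_q$, and the identity collapses to the purely class-number statement
\[
\tfrac12\bigl(H_w(-4q)-p\,H_w(-4q/p^2)\bigr)=N_A(0),
\]
where by convention $H_w(-4q/p^2)=H_w(-4)=\tfrac12$ when $v=2$ and $H_w(-4q/p^2)=0$ when $v=1$.

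The hard part will be this last identity; everything before it is formal. I would prove it by writing $-4p^v=f^2 d_0$ with $d_0$ the fundamental discriminant of $\Q(\sqrt{-p})$ — so $d_0=-4p$ if $p\equiv 1\pmod 4$ and $d_0=-p$ if $p\equiv 3\pmod 4$ — and $f$ the corresponding conductor. By Lemma~\ref{cor728} the function $g\mapsto h_w(g^2 d_0)$ is multiplicative, so $H_w(-4p^v)=\sum_{g\mid f}h_w(g^2 d_0)$ works out to an explicit geometric-sum-type expression in $p$ and $v$ whose shape depends only on $v\bmod 2$, on $p\bmod 4$, and, when $p=3$, on the correction $h_w(-3)=h(-3)/3$. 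In each of these cases the difference $H_w(-4p^v)-p\,H_w(-4p^{v-2})$ telescopes to a constant independent of $v$: one gets $H_w(-4p)$ when $v$ is odd and $\tfrac12\bigl(1-\legen{-4}{p}\bigr)$ when $v$ is even. By Theorem~\ref{S46withweights} these constants are exactly $2N_A(0)$ in the cases ``$q$ not a square'' and ``$q$ a square'' respectively, which is the required identity and finishes the proof.

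I expect the only genuine difficulty to be the bookkeeping in this final step — in particular keeping track of the exact power of $2$ dividing $-4p^v$ when $p\equiv 3\pmod 4$ (where $-4p=2^2\cdot(-p)$, so $2\mid f$), and the behaviour at the degenerate prime $p=3$ — while Steps~1 and~2 are routine once Lemmas~\ref{MainProp} and~\ref{ord} are in hand.
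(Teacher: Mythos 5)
Your proof is correct and follows essentially the same route as the paper's: both rest on the telescoping identity $\tau(q,k)=\tfrac12 A_qH_w(-4q)+\sum_{0\le 2i<v}(p^i)^{k-1}\omega(q/p^{2i},k)$ obtained from Lemmas \ref{MainProp} and \ref{ord}, together with a class-number computation via Lemma \ref{cor728} identifying the $t=0$ contribution with $N_A(0)$. The paper merely packages that last computation as the expansion $\tfrac12 H_w(-4q)=\tfrac{\sqrt q}{4}\mathds{1}_{v\equiv 0\;(2)}+\sigma_1(p^{\lfloor (v-1)/2\rfloor})N_A(0)$ and absorbs it into the $\omega'$ terms before telescoping, rather than isolating your residual identity $\tfrac12\bigl(H_w(-4q)-p\,H_w(-4q/p^2)\bigr)=N_A(0)$ afterwards; the two arrangements are equivalent.
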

\begin{proof}
Lemmas \ref{MainProp} and \ref{ord} allow us to write \est{ \tau(q,k) = \sum_{0\leq 2i<v} (p^i)^{k-1} \omega(q/p^{2i},k)+ \frac{1}{2} A_{q} H_{w}(-4q). }
Now we need to do some calculation with these class numbers.   The definition of $H_w$ and Lemma \ref{cor728} imply \est{ \frac{1}{2}H_w(-4q) = \begin{cases} \frac{\sqrt{q}
}{4} + \sigma_1(p^{v/2-1})N_A(0) & \text{ if } v\equiv 0 \pmod 2 \\ \sigma_1(p^{\frac{v-1}{2}}) N_A(0) & \text{ if } v \equiv 1 \pmod 2 ,\end{cases}} and we also have \est{ p^i A_{q} = p^i (i\sqrt{q})^{k-2} = (p^i)^{k-1} A_{q/p^{2i}} } from the definition.
It follows that 
\est{\label{tau}\tau(q,k) = \sum_{0\leq 2i<v}  (p^i)^{k-1} \omega'(q/p^{2i},k) + \frac{\sqrt{q}}{4}A_{q}\mathds{1}_{v\equiv 0 \pmod*{2}},} 
and changing variables we compute 
\est{p^{k-1}\tau(q/p^2,k) =\sum_{0< 2i<v}  (p^i)^{k-1} \omega'(q/p^{2i},k) +\frac{\sqrt{q}}{4}A_{q}\mathds{1}_{v\equiv 0 \pmod*{2}} .} 
By subtracting we prove the lemma.  
\end{proof}

Recall that we have set \est{ a_{R,j} = \binom{2R}{j}-\binom{2R}{j-1}.}
 
\begin{lemma}\label{swapwts}
We have \est{ t^{2R} = \sum_{j=0}^R a_{R,j} q^j \frac{\alpha^{2R-2j+1}-\overline{\alpha}^{2R-2j+1}}{\alpha-\overline{\alpha}}.}
\end{lemma}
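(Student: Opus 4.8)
The identity is a polynomial identity in $\Z[t,q]$, where $\alpha,\overline{\alpha}$ are the formal symbols satisfying $\alpha+\overline{\alpha}=t$ and $\alpha\overline{\alpha}=q$; indeed both sides are symmetric polynomials in $\alpha,\overline{\alpha}$, so it suffices to verify it as a formal identity. (As a sanity check one may note that upon writing $\alpha=\sqrt q\,e^{i\theta}$ the claim becomes the classical expansion of $\cos^{2R}\theta$ in Chebyshev polynomials of the second kind $U_{2R-2j}(\cos\theta)$, the coefficients being exactly the ballot numbers $a_{R,j}=\binom{2R}{j}-\binom{2R}{j-1}$.)

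The first step is to expand $t^{2R}=(\alpha+\overline{\alpha})^{2R}=\sum_{i=0}^{2R}\binom{2R}{i}\alpha^i\overline{\alpha}^{2R-i}$ and pair the terms $i$ and $2R-i$, using $\alpha\overline{\alpha}=q$:
\[
t^{2R}=\binom{2R}{R}q^{R}+\sum_{i=0}^{R-1}\binom{2R}{i}q^{i}\bigl(\alpha^{2R-2i}+\overline{\alpha}^{2R-2i}\bigr).
\]
The second step is the elementary observation that, writing $U_m=\dfrac{\alpha^{m+1}-\overline{\alpha}^{m+1}}{\alpha-\overline{\alpha}}$, one has for every $n\ge 1$
\[
\alpha^{n}+\overline{\alpha}^{n}=U_{n}-q\,U_{n-2},
\]
which follows at once from $\alpha^{n+1}-\overline{\alpha}^{n+1}-\alpha\overline{\alpha}\,(\alpha^{n-1}-\overline{\alpha}^{n-1})=(\alpha-\overline{\alpha})(\alpha^{n}+\overline{\alpha}^{n})$.

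Substituting this (with $n=2R-2i$, so that no negative index occurs) into the displayed formula and collecting the coefficient of $q^{j}U_{2R-2j}$ gives, for $1\le j\le R-1$, the contribution $\binom{2R}{j}$ from the $i=j$ term of the sum and $-\binom{2R}{j-1}$ from the $i=j-1$ term, hence $a_{R,j}$; the endpoint $j=0$ gets only $\binom{2R}{0}=a_{R,0}$, and the endpoint $j=R$ gets $\binom{2R}{R}$ from the leading term and $-\binom{2R}{R-1}$ from the $i=R-1$ term, hence $a_{R,R}=C_R$. This yields $t^{2R}=\sum_{j=0}^{R}a_{R,j}q^{j}U_{2R-2j}$, which is the claim. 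There is no real obstacle here: the only care needed is the bookkeeping at the two endpoints $j=0$ and $j=R$ and checking that the index $2R-2i$ in the step-two identity stays nonnegative throughout the range $0\le i\le R-1$.
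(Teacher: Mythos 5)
Your proof is correct and complete. The paper itself offers no argument here beyond the remark that the lemma is ``an easy proof by induction exercise'' (presumably induction on $R$ using the recurrence $U_n = tU_{n-1} - qU_{n-2}$), so your direct derivation --- binomial expansion of $(\alpha+\overline{\alpha})^{2R}$ with terms paired via $\alpha\overline{\alpha}=q$, followed by the inversion $\alpha^n+\overline{\alpha}^n = U_n - qU_{n-2}$ and a telescoping collection of coefficients --- is a genuinely different route, and arguably a better one: it produces the ballot-number coefficients $a_{R,j}=\binom{2R}{j}-\binom{2R}{j-1}$ naturally as a difference of adjacent binomial coefficients rather than having to guess them in advance and verify an inductive step. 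Your bookkeeping at the endpoints is right (at $j=R$ one uses $U_0=1$ to absorb the central term $\binom{2R}{R}q^R$ together with the $-\binom{2R}{R-1}q^R U_0$ contribution from $i=R-1$, giving $C_R$), and the observation that both sides are symmetric polynomials in $\alpha,\overline{\alpha}$, hence polynomials in $t,q$, correctly disposes of the degenerate case $\alpha=\overline{\alpha}$ (i.e.\ $t^2=4q$), which the paper does in fact need when $q$ is a square.
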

\begin{proof} This is an easy proof by induction exercise.
\end{proof}
Putting together Lemmas \ref{ord} and \ref{swapwts} we get 
\est{\sum_{\substack{t\equiv 0 \pmod* 2 \\ t^2<4q }} t^{2R} N_A(t) = \sum_{j=0}^R a_{R,j}q^j \left( \tau(q,2R-2j+2)-p^{2R-2j+1}\tau(q/p^2,2R-2j+2)\right).} 
If $v$ is even we add the contribution from the terms corresponding to $t^2=4q$, i.e.,~those supersingular curves having ring of endomorphisms equal to a maximal order in a quaternion algebra over the base field.  This concludes the proof of Theorem \ref{Birchwith2tors}.

\section*{acknowledgements}
Part of this project grew out of the PhD thesis of the first author.  He thanks Noam Elkies for his extensive guidance and for many helpful conversations.  The authors also thank him for carefully reading a draft of this paper and the anonymous referee for helpful comments.

\def\cprime{$'$}
\bibliographystyle{amsplain}

\end{document}